\newcommand{\email}[1]{\href{mailto:#1}{#1}}
\newenvironment{keywords}{\small \noindent\begin{quote}\textbf{Keywords.}}{\end{quote}}
\newenvironment{MSCcodes}{\vspace{.2cm}\small \noindent \begin{quote}\textbf{MSC codes.}}{\end{quote}}
\theoremstyle{plain}
\newtheorem{theorem}{Theorem}
\newtheorem{lemma}[theorem]{Lemma}
\newtheorem{corollary}[theorem]{Corollary}
\newtheorem{proposition}[theorem]{Proposition}
\newtheorem{remark}[theorem]{Remark}
\newtheorem{assumption}{Assumption}
\crefname{lemma}{Lemma}{Lemmas}
\crefname{remark}{Remark}{Remarks}
\crefname{assumption}{Assumption}{Assumptions}
\crefname{proposition}{Proposition}{Propositions}
\crefname{corollary}{Corollary}{Corollaries}
\crefname{section}{Section}{Sections}
\crefname{subsection}{Subsection}{Subsections}
\Crefname{equation}{Equation}{Equations}
\Crefname{figure}{Figure}{Figures}
\numberwithin{equation}{section}
\numberwithin{theorem}{section}
\numberwithin{table}{section}
\numberwithin{figure}{section}
\DeclarePairedDelimiter\abs{\lvert}{\rvert}
\DeclarePairedDelimiter\ip{\langle}{\rangle}
\DeclarePairedDelimiter\norm{\lVert}{\rVert}
\DeclarePairedDelimiter\paren{\lparen}{\rparen}
\DeclarePairedDelimiter\bkt{\lbrack}{\rbrack}
\DeclareMathOperator{\Id}{Id}
\DeclareMathOperator{\I}{I}
\DeclareMathOperator*{\esssup}{ess\,sup}
\DeclareMathOperator*{\Var}{\mathbf{V}}
\DeclareMathOperator*{\var}{\mathbf{V}}
\newcommand{\placeholder}{\, \cdot \,}
\renewcommand{\d}{d}
\newcommand{\real}{\mathbf R}
\newcommand{\expect}{\mathbf{E}}
\newcommand{\torus}{\mathbf{T}}
\renewcommand{\t}{\mathsf T}
\renewcommand{\leq}{\leqslant}
\renewcommand{\geq}{\geqslant}
\newcommand{\cov}[0]{{\rm cov}}
\newcommand{\ind}{\mathbf{1}}
\renewcommand{\L}{\mathcal{L}}
\newcommand{\X}{\mathcal{X}}
\newcommand{\x}{\mathbf{x}}
\newcommand{\bigO}{\mathrm O}
\newcommand{\e}{\mathrm{e}}
\newcommand{\T}{\torus}
\newcommand{\E}{\expect}
\newcommand{\R}{\real}
\newcommand{\evec}{\mathbf{e}}
\newcommand{\Loss}{\mathscr{L}}
\newcommand{\Z}{\mathbf Z}
\definecolor{darkgreen}{rgb}{0.0, 0.5, 0.0}
\newcommand{\remA}{\mathcal{R}}
\newcommand{\remB}{\mathcal{R}}
\newcommand{\remAT}{\remA_{T,1}}
\newcommand{\remBT}{\remB_{T,2}}
\newcommand{\remATi}{\remA_{T,1}^{(i)}}
\newcommand{\Aterm}{\remAT^{(1)}}
\newcommand{\Bterm}{\remAT^{(2)}}
\newcommand{\Cterm}{\remAT^{(3)}}
\newcommand{\Dterm}{\remBT^{(1)}}
\newcommand{\Eterm}{\remBT^{(2)}}
\newcommand{\Fterm}{\remBT^{(3)}}
\newcommand{\Mterm}{M_T}
\newcommand{\rhoterm}{A_T}
\newcommand{\estTmp}{\widehat{\rho}}
\newcommand{\tmpStatic}{\widehat{\rho}}
\newcommand{\GKest}{\estTmp_{K,T}^{\rm GK}} % std GK estimator
\newcommand{\HEest}{\estTmp_{K,T}^{\rm HE}} % std HE estimator
\newcommand{\HEvar}{\estTmp_T} % for HE proofs
\newcommand{\staticEst}{\tmpStatic_{K,T}^{\rm static}}
\newcommand{\genCorr}[3]{%
  \ifthenelse{\equal{#2}{}}%
    {\widehat{\rho}_{K,T,\psi_{#1}}^{\rm corr,#3}}%
    {\widehat{\rho}_{K,T,\psi_{#1},\psi_{#2}}^{\rm corr,#3}}%
}
\newcommand{\GKcorr}[2]{\genCorr{#1}{#2}{GK}}
\newcommand{\HEcorr}[2]{\genCorr{#1}{#2}{HE}}
\newcommand{\w}{w}
\newcommand{\Lw}{\norm{w'}_{C^0}}
\newcommand{\F}{F^*}
\def\th@plain{%
  \thm@notefont{}% same as heading font
  \itshape % body font
}
\def\th@definition{%
  \thm@notefont{}% same as heading font
  \normalfont % body font
}
\let\oldparagraph=\paragraph
\renewcommand\paragraph[1]{\oldparagraph{#1.}}
\title{Neural network approaches for variance reduction in fluctuation formulas}
\date{}%\today}
\author[1]{G.A.\ Pavliotis$^{a,}$}
\author[2,3]{R.\ Spacek$^{b,}$}
\author[3,2]{G.\ Stoltz$^{c,}$}
\author[2,3]{U.\ Vaes$^{d,}$}
\affil[ ]{\footnotesize $^a$\email{g.pavliotis@imperial.ac.uk},
                        $^b$\email{renato.spacek@enpc.fr},
                        $^c$\email{gabriel.stoltz@enpc.fr},
                        $^d$\email{urbain.vaes@inria.fr}}
\affil[1]{\footnotesize Imperial College London, London, United Kingdom}
\affil[2]{\footnotesize MATHERIALS project-team, Inria Paris}
\affil[3]{\footnotesize CERMICS, \'Ecole des Ponts, IP Paris, France}
\begin{document}
\maketitle

\begin{abstract}
We propose a method utilizing physics-informed neural networks (PINNs) to solve Poisson equations that serve as control variates in the computation of transport coefficients via fluctuation formulas, such as the Green--Kubo and generalized Einstein-like formulas. By leveraging approximate solutions to the Poisson equation constructed through neural networks, our approach significantly reduces the variance of the estimator at hand. We provide an extensive numerical analysis of the estimators and detail a methodology for training neural networks to solve these Poisson equations. The approximate solutions are then incorporated into Monte Carlo simulations as effective control variates, demonstrating the suitability of the method for moderately high-dimensional problems where fully deterministic solutions are computationally infeasible.
\end{abstract}

\begin{keywords}
    physics-informed neural networks, control variates, Green--Kubo formula
\end{keywords}

\begin{MSCcodes}
    82C32, 82C31, 46N30, 65C40
\end{MSCcodes}

\section{Introduction}
In this paper,
we investigate general control variate strategies to reduce the variance of estimators of \emph{transport coefficients},
understood in a broad sense as $L^2$ inner products involving the solution to a Poisson equation,
as made precise in~\eqref{eq:tc_ip} below.

\paragraph{Mathematical framework and objective}To precisely describe the mathematical object that the methods we present aim at approximating,
we consider the following general time-homogeneous stochastic differential equation~(SDE) on the state space $\mathcal X$ (where $\X$ is typically $\R^d$ or $\T^d$, with $\T=\R/\Z$ the one-dimensional torus):
\begin{equation}
    dX_t = b(X_t) \, dt + \sigma(X_t) \, dW_t.
    \label{eq:generic_SDE}
\end{equation}
The associated Markov semigroup associated to~\eqref{eq:generic_SDE} is given by
\(
    \paren*{\e^{t \mathcal L} \varphi} (x)
    = \expect \bigl[\varphi(X_t) \big| X_0 = x\bigr].
\)
We denote by~$\mathcal L$ its infinitesimal generator on $L^2(\mu)$,
which is given on $C^{\infty}_{\rm c}(\X)$ by
\begin{equation}
    \notag
    \mathcal L = b^\t\nabla + \frac{1}{2}\sigma\sigma^\t \colon \nabla^2.
\end{equation}
We assume that the dynamics~\eqref{eq:generic_SDE} admits a unique invariant probability measure~$\mu$,
and that the equation~\eqref{eq:generic_SDE} is supplemented with stationarity initial conditions~$X_0\sim \mu$.
Suppose that~$f \colon \mathcal X \to \real$ and $g \colon \mathcal X \to \real$. Our focus in this work is to devise and study computational approaches to approximate quantities of the form
\begin{equation}
    \tag{TC}
    \rho = \ip{f, G},
    \label{eq:tc_ip}
\end{equation}
where $G$ denotes the solution to the Poisson equation (the well-posedness of such equations will be discussed below)
\begin{equation}
    -\L G = g.
    \label{eq:poisson_G_intro}
\end{equation}
For a more convenient presentation of the results to come, we assume throughout this work that the left-hand side of Poisson equations such as \eqref{eq:poisson_G_intro} have average 0 with respect to $\mu$.
This is a necessary condition for the well-posedness of the equation (as the left-hand side integrates to 0 with respect to $\mu$) and can be considered without loss of generality, as one can simply recenter functions with nonzero average by replacing~$g$ by~$g - \E_\mu(g)$. In~\eqref{eq:tc_ip} and throughout this work, scalar products~$\ip{\cdot,\cdot}$ (and associated norms~$\norm{\cdot}$) are taken on~$L^2(\mu)$, unless otherwise specified.

Solving the partial differential equation~\eqref{eq:poisson_G_intro} with deterministic methods is not computationally feasible in high dimensions,
and so a number of probabilistic methods have been proposed in the literature to approximate quantities of the form~\eqref{eq:tc_ip},
some of which we discuss in the next paragraph.
In this work,
we present a variance reduction approach based on control variates for a class of statistical estimators for~$\rho$
relying on the celebrated \emph{Green--Kubo formula}: for two functions~$f,g \in L^2(\mu)$ with average~0 with respect to~$\mu$,
\begin{equation}
    \rho = \int_0^{+\infty} \E\bigl[f(X_0) g(X_t)\bigr] \, dt.
    \label{eq:GK}
\end{equation}
In the rest of this section,
we first present a number of applications where
the calculation of quantities of the form~\eqref{eq:tc_ip} is important.
We then briefly review the control variate approach to variance reduction,
and finally list the contributions of this work.

\paragraph{Applications} Quantities of the form~\eqref{eq:tc_ip} appear in various applications:
\begin{itemize}[leftmargin=*]
    \item
        In molecular dynamics,
        most transport coefficient can be written in this form~\cite{resibois1977}.
        Examples include the mobility, the heat conductivity and the shear viscosity.
        See~\cite[Section 5]{lelievre2016} for an overview,
        and~\cite{roussel2018,spacek2023,blassel2024,pavliotis2023} for recent works proposing efficient computational approaches for the numerical calculation of transport coefficients.
        We emphasize that,
        in molecular dynamics applications,
        it may be rather easy to generate independent and identically distributed (i.i.d.) samples from~$\mu$
        but much more difficult to approximate the transport coefficient~\eqref{eq:tc_ip}.
        A classical example of this is the mobility in the underdamped regime for two-dimensional Langevin dynamics,
        even though the state space is only 4-dimensional;
        see~\cite{pavliotis2008a,pavliotis2023}.

    \item
        In statistics,
        formulas of the form~\eqref{eq:tc_ip} with $f = g$ are important because
        they give the asymptotic variance for the estimator of $\expect_{\mu}[f]$ based on an ergodic average along the realization of the Markov process~\eqref{eq:generic_SDE},
    see~\cite{bhattacharya1982,kipnis1986,cattiaux2012}.
        More precisely, it can be shown that (see~\cite{bhattacharya1982})
        \[
            \frac{1}{\sqrt{T}} \int_{0}^{T} f(X_t) \, \d t
            \xrightarrow[T \to +\infty]{\rm Law} \mathcal N(0, \rho).
        \]
        Thus, the estimation of~$\rho$ is useful for quantifying the uncertainty associated with the ergodic estimator.
        Early works towards this goal are due to Parzen~\cite{parzen1957} and~Neave~\cite{neave1970}.
        See also~\cite{lu2019} for a more modern reference on the subject.

    \item
        In the theory of homogenization for multiscale stochastic differential equations,
        quantities of the form~\eqref{eq:tc_ip} appear in the coefficients of the homogenized equation;
        see~\cite{pardoux2001,pardoux2003},
        as well as~\cite[Section 11]{pavliotis2008} for a theoretical overview,
        and~\cite{vandeneijnden2003,e2005,abdulle2017} for numerical works aimed at the efficient calculation of the coefficients of the homogenized equation.
        In this context, the functions $f$ and $g$ in~\eqref{eq:tc_ip} and~\eqref{eq:poisson_G_intro}
        are usually different,
        which motivates the level of generality we consider.
\end{itemize}

\paragraph{Using control variates for variance reduction} We now motivate and give the flavor of the methods proposed in this work,
without entering into details.
To this end,
assume that an approximation $\psi_g$ to the exact solution~$G$ of~\eqref{eq:poisson_G_intro} is available
and rewrite
\begin{equation}
    \label{eq:decomposition}
    \rho = \ip{f,\psi_g} + \ip{f, G - \psi_g}.
\end{equation}
The first term on the right-hand side is the average of the known function $f \psi_g$ with respect to~$\mu$.
This term can be calculated efficiently,
with classical Monte Carlo methods sampling from~$\mu$.
The second term on the right-hand side of~\eqref{eq:decomposition}, on the other hand,
still involves an inner product between~$f$ and the solution to a Poisson equation of the form~\eqref{eq:poisson_G_intro},
but now with the right-hand side $g + \mathcal L \psi_g$ instead of $g$.
This term can be estimated numerically based on the Green--Kubo formula~\eqref{eq:GK}:
\begin{equation}
    \label{eq:gk_control_variate}
    \ip{f, G - \psi_g} = \int_{0}^{+\infty} \E\bigl[f(X_0) (g + \mathcal L \psi_g) (X_t)\bigr] \, dt.
\end{equation}
A number of statistical estimators can be employed to approximate the integral on the right-hand side,
but we postpone the details of these estimators to~\cref{sec:standard_formulas}.
The point we want to make here is that,
if $\psi_g$ is a good approximation of~$G$ in some sense,
then the function $g + \mathcal L \psi_g$ is~``small'',
in which case it is reasonable to expect estimators for~\eqref{eq:gk_control_variate} to have a smaller variance than those for~\eqref{eq:GK}.

The method just described is one of the three control variate approaches presented in~\cref{sec:cv_methodology}.
These are based on different rewritings of~\eqref{eq:decomposition},
but they all have in common that they require approximate solutions to Poisson equations of the form~\eqref{eq:poisson_G_intro}.

It may seem contradictory,
at this point,
that the control variate approaches we propose are based on approximate solutions to Poisson equation of the form~\eqref{eq:poisson_G_intro},
given our earlier claim that solving~\eqref{eq:poisson_G_intro} deterministically was not computationally feasible in high dimension.
The key to resolve this apparent contradiction is to realize that the control variate approaches described in this work can yield substantial variance reduction even when~$\psi_g$ is a rather  poor approximation of~$G$,
when compared to the usual performance of traditional numerical methods for low-dimensional~PDEs.
Therefore, we argue that using control variates of the type presented in~\cref{sec:cv_methodology} is ideal in moderately high dimensions
-- for which solving the Poisson equation~\eqref{eq:poisson_G_intro} precisely using a deterministic approach is impossible,
but a rough solution can still be calculated,
using for example a neural network.
The applicability of the control variate approach depending on the problem dimension is illustrated in~\cref{table:cv_philosophy}.
The methods presented in~\cref{sec:cv_methodology} can be thought of as hybrid deterministic-probabilistic approaches for calculating~\eqref{eq:tc_ip},
where a rough deterministic approximation is corrected by a Monte Carlo simulation.

\begin{table}[thb]
\footnotesize
    \centering
    \caption{Comparison of fully deterministic, hybrid (probabilistic with control variate) and fully probabilistic methods for estimating~\eqref{eq:tc_ip}.
        The words ``inaccurate'' and ``inefficient'' should be understood as qualifying the performance of the method in comparison with the other methods for the dimension~$d$ considered.
        Note that the values separating moderate dimension from low and high dimensions are of course somewhat arbitrary;
        in particular, the performance of a deterministic method is highly problem-dependent and method-dependent.}
    \label{table:cv_philosophy}
    \begin{tabular}{cccc}
        \toprule
         & \multicolumn{3}{c}{\textbf{Dimension}}  \\
         \cmidrule(lr){2-4}
        \textbf{Method} & Low ($d \leq 3$) & Moderate ($3 < d \leq 10$) & High ($10 < d$) \\ \midrule
        Fully deterministic & \textcolor{darkgreen}{Ideal} & \textcolor{red}{Inaccurate} & \textcolor{red}{Very inaccurate} \\
        Probabilistic + CV & \textcolor{red}{Inefficient} & \textcolor{darkgreen}{Ideal} & \textcolor{red}{Inefficient} \\
        Fully probabilistic & \textcolor{red}{Very inefficient} & \textcolor{red}{Inefficient} & \textcolor{darkgreen}{Ideal}
         \\ \bottomrule
    \end{tabular}
\end{table}

\paragraph{Our contributions}\label{par:Our contributions}
The main contributions of this paper are the following.

\begin{itemize}[leftmargin=*]
    \item
        We review classical estimators for~\eqref{eq:tc_ip} based on the Green--Kubo formula~\eqref{eq:GK},
        and present rigorous results concerning their bias and variance in the longtime limit.
        We focus in the mathematical analysis on estimators based on solutions to the continuous-time dynamics~\eqref{eq:generic_SDE} for simplicity.
        However, the approach we describe can be employed in combination with discrete-time approximations of~\eqref{eq:generic_SDE} used in practice.

    \item
        We present and analyze three control variate approaches to reduce the variance of the classical estimators.
        The first one,
        in the same spirit as in~\cite{roussel2019,pavliotis2023},
        requires an approximate solution to the Poisson equation~\eqref{eq:poisson_G_intro}.
        The second one is a new, computationally more efficient
        approach based on the approximate solution not of~\eqref{eq:poisson_G_intro},
        but of another Poisson equation involving $f$ and the adjoint of~$\mathcal L$.
        Finally, the third approach combines the first two,
        and yields the largest variance reduction.

    \item
        We use a neural network approach to numerically approximate solutions to the Poisson equations required in the control variates.

    \item
        We illustrate the efficiency of the proposed control variates approaches by means of careful numerical experiments.
        Our experiments demonstrate that,
        even when the Poisson equations are not solved accurately,
        substantial variance reduction can still be achieved.
\end{itemize}

\paragraph{Plan of the paper}\label{par:Plan of the paper}
The rest of this paper is organized as follows.
In \cref{sec:standard_formulas},
we present classical estimators for~\eqref{eq:tc_ip} and rigorously establish formulas for their bias and variance.
Then, in~\cref{sec:cv_methodology},
we present novel control variate approaches for these estimators.
Finally, we describe how the approximate solutions to Poisson equation involved in these approaches
can be constructed by using physics-informed neural networks (PINNs), and present numerical experiments in~\cref{sec:numerics}.

\section{Fluctuation formulas}
\label{sec:standard_formulas}
We discuss in this section some fluctuation formulas used for computing \eqref{eq:tc_ip},
then present error bounds for their estimators.
We first discuss the Green--Kubo formula in \cref{subsec:standard_GK},
then a generalized Einstein formula in \cref{subsec:gen_HE}.
A summary of the results presented in this section is given in~\cref{table:summary_fluctuations}.
Additional results on fluctuation formulas are presented in \cref{appendix:add_fluc_form}.

\begin{table}[tb!]
    \footnotesize
    \centering
    \caption{Summary of the results proved in \cref{sec:standard_formulas}.
        For conciseness, the estimators are presented here for just one realization.
    }
    \label{table:summary_fluctuations}
    \begin{tabular}{ccc}
        \toprule
         \textbf{Estimator for} $\ip{f, -\mathcal L^{-1} g}$ & \textbf{Bias} & \textbf{Asymptotic variance}
         \\ \midrule
         \phantom{$\Big($}
             \textbf{Green--Kubo}:
             \cref{subsec:standard_GK}
         &
         \cref{prop:bias_standard_GK}
         &
         \cref{prop:variance_standard_GK}
         \\
         \phantom{$\Bigg($}
         \(
            \displaystyle
             \int_{0}^{T} f(X_0) g(X_t) \, \d t
         \)
         &
         \(
            \displaystyle
            \leq \frac{L}{\lambda} \norm{f} \norm{g} \e^{- \lambda T}
         \)
         &
         \(
            \displaystyle
            \sim 2 \textcolor{red}{T} \norm{f}^2 \ip{g, - \mathcal L^{-1} g}
         \)
         \\
         \midrule
         \phantom{$\Big($}
             \textbf{Half-Einstein}:
         \cref{subsec:gen_HE}
         &
         \cref{prop:bias_gen_standard_HE}
         &
         \cref{prop:var_standard_HE}
         \\
         \phantom{$\Bigg($}
        \(
            \displaystyle
            \int_{0}^{T} \!\!\! \int_{0}^t w\paren*{\frac{t-s}{T}} f(X_s) g(X_t) \, \d s \, \d t
        \)
         &
         \(
         \bigO(T^{-1})
         \)
         &
         \(
         \sim 4 \zeta_w \ip{f, \mathcal -\L^{-1} f}\ip{g, \mathcal -\L^{-1} g}
         \)
         \\
        & &
        \(
            \qquad \text{ with } \zeta_w := \int_{0}^{1} (1-v)\w(v)^2 \, dv
        \)
         \\
         \bottomrule
    \end{tabular}
\end{table}

Before presenting the results,
we introduce useful notation used throughout this section,
as well as an assumption.
Define
\begin{equation}
    \notag
    L^p_0(\mu) = \left\{\varphi \in L^p(\mu) \; \middle| \; \int_\mathcal{X} \varphi \, d\mu = 0 \right\}.
\end{equation}
The notation $\Pi \colon L^2(\mu) \to L^2_0(\mu)$ refers to the orthogonal projection onto $L^2_0(\mu)$, acting as
\[
\Pi f = f - \int_\mathcal{X} f \, d\mu.
\]
We also introduce, for sufficiently smooth $F, G$,
the \emph{carr\'e du champ}
\begin{equation}
    \label{eq:carre_du_champ}
    \Gamma(F, G) =
    \frac{1}{2} \Bigl(\mathcal L(FG) - F \mathcal L G  - G \mathcal L F\Bigr)
    = \frac{1}{2} \nabla F^\t \sigma \sigma^\t \nabla G,
\end{equation}
as well as the associated Dirichlet energy
\[
    \mathcal E(F, G) = \int_{\mathcal X} \Gamma(F, G) \, \d \mu = \frac{1}{2} \Bigl( \ip{F, -\mathcal L G} + \ip{G, -\mathcal L F} \Bigr).
\]
In several proofs in this section,
we will need to manipulate the quantities~$2\Gamma(F, F)$ and~$2 \mathcal E(F, F)$.
Since these occur very often, we introduce the notation~$\chi_F = 2 \Gamma(F, F)$ and~$\overline \chi_F = 2 \mathcal E(F, F)$,
and note that
\begin{equation}
    \label{eq:chibar_ip}
    \overline \chi_F = 2\ip{F,- \mathcal L F}.
\end{equation}
In most of the results of this section,
we make the following assumption, where~$\mathcal{B}(L^2_0(\mu))$ is the Banach space of bounded linear operators on~$L^2_0(\mu)$, and~$\norm{\cdot}_{\mathcal{B}(L^2_0(\mu))}$ the associated operator norm.
\begin{assumption}
    [Decay estimates of the semigroup]
\label{as:semigroup_decay}
    There exist constants~$L\in\R_+$ and~$\lambda>0$ such that
\begin{equation}
        \norm*{\e^{t\L}}_{\mathcal{B}(L^2_0(\mu))} \leq L\e^{-\lambda t}.
        \label{eq:decay_est}
    \end{equation}
\end{assumption}

Decay estimates of the form \eqref{eq:decay_est} can be shown to hold for both overdamped and kinetic Langevin dynamics.
For the overdamped dynamics,
it follows directly from the fact that the probability measure~$\mu$ satisfies a Poincar\'e inequality (in fact, in the setting where~\eqref{eq:generic_SDE} is a reversible diffusion, the decay estimate~\eqref{eq:decay_est} is equivalent to the measure~$\mu$ satisfying a Poincar\'e inequality; see \cite[Chapter 4]{bakry2014} for a comprehensive discussion). For Langevin dynamics,
it can be established from hypocoercivity arguments~\cite{herau2006,dolbeault2009,dolbeault2015,grothaus2016,roussel2018,cao2023,brigati2023,albritton2024}.

\begin{corollary}
    [Well-posedness of Poisson equations]
    \label{cor:well_posedness_poisson}
    Suppose that \cref{as:semigroup_decay} holds. Then~$\L^{-1}$ is a well-defined bounded operator on~$L^2_0(\mu)$,
    which satisfies the following operator identity:
    \begin{equation}
        \notag
        -\L^{-1} = \int_0^{+\infty} \e^{t\L} \, dt.
    \end{equation}
    Thus, Poisson equations of the form $-\L G = g$ for $g \in L^2_0(\mu)$ are well-posed and admit a unique solution in $L^2_0(\mu)$.
\end{corollary}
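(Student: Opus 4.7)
The plan is to define the bounded operator $R := \int_0^{+\infty} \e^{t\L} \, dt$ as a candidate for $-\L^{-1}$ on $L^2_0(\mu)$, and then verify that $R$ indeed inverts $-\L$. First I would interpret the integral as a strong (Bochner) integral of the $\mathcal{B}(L^2_0(\mu))$-valued function $t \mapsto \e^{t\L}$; the integrand is strongly continuous in~$t$ by the $C_0$-semigroup property, and \cref{as:semigroup_decay} provides the integrable upper bound $\norm{\e^{t\L}}_{\mathcal{B}(L^2_0(\mu))} \leq L\e^{-\lambda t}$. This yields a well-defined bounded operator $R$ on $L^2_0(\mu)$ with $\norm{R}_{\mathcal{B}(L^2_0(\mu))} \leq L/\lambda$.

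Next, I would identify $R$ with $-\L^{-1}$ by working with the bounded truncations $R_T g := \int_0^T \e^{t\L} g \, dt$, for which the standard semigroup identity (valid for every $g$ in the ambient Banach space $L^2_0(\mu)$, not only for $g\in D(\L)$) reads
\[
    R_T g \in D(\L), \qquad \L R_T g = \e^{T\L} g - g.
\]
The decay estimate forces $\e^{T\L} g \to 0$ in $L^2_0(\mu)$ as $T \to +\infty$, so the right-hand side converges to $-g$, while $R_T g \to R g$ by construction. Closedness of~$\L$ then yields $Rg \in D(\L)$ and $-\L R g = g$. The reverse identity $R(-\L) g = g$ for $g \in D(\L) \cap L^2_0(\mu)$ follows by applying the same reasoning to $\L g$ and using the commutation $\e^{t\L} \L g = \L \e^{t\L} g$ on $D(\L)$.

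Combining the two identities, $-\L$ restricted to its domain inside $L^2_0(\mu)$ is a bijection onto~$L^2_0(\mu)$ with bounded inverse~$R$. Consequently, for every $g \in L^2_0(\mu)$ the Poisson equation $-\L G = g$ admits the unique solution $G = Rg \in L^2_0(\mu)$, which is exactly the content of the corollary.

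The only delicate point, though rather routine in semigroup theory, is the interaction of the unbounded generator~$\L$ with the improper integral; I would handle it not by formally commuting $\L$ with $\int$, but by passing to the limit in the bounded truncations~$R_T$ and invoking closedness of~$\L$. A tacit prerequisite, implicit in the formulation of \cref{as:semigroup_decay}, is that $\e^{t\L}$ leaves $L^2_0(\mu)$ invariant; this follows from $\mu$ being an invariant measure for the dynamics, since $\int (\e^{t\L}\varphi)\,d\mu = \int \varphi \, d\mu$ for every $\varphi \in L^2(\mu)$.
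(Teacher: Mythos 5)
The paper states this corollary without supplying a proof, treating it as an immediate and standard consequence of \cref{as:semigroup_decay}; your proposal fills that gap, and it does so correctly. The argument you use — Bochner-integrability of $t\mapsto \e^{t\L}g$ from the exponential bound, the truncation identity $\L\int_0^T \e^{t\L}g\,dt = \e^{T\L}g - g$ valid for all $g\in L^2_0(\mu)$, passage to the limit via closedness of the generator, and the mirrored computation for $R(-\L)g = g$ on the domain — is exactly the textbook semigroup-theoretic proof of the resolvent formula $-\L^{-1}=\int_0^\infty \e^{t\L}\,dt$ when $0$ lies in the resolvent set, which is what the uniform decay estimate guarantees. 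Your closing remark that $\e^{t\L}$ preserves $L^2_0(\mu)$ because $\mu$ is invariant is also the right observation to justify viewing $\L$ as an operator on $L^2_0(\mu)$ in the first place. There is no gap; this is precisely the proof the authors implicitly have in mind.
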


Throughout this section,
we work under the assumption that the stochastic differential equation~\eqref{eq:generic_SDE}
is supplemented with stationary initial conditions
\begin{equation}
    \notag
    X_0 \sim \mu,
\end{equation}
so that $X_t \sim \mu$ for all~$t \geq 0$.
The results can be generalized beyond stationary initial conditions,
at the cost of an additional term in the bias scaling as $\bigO(T^{-1})$ \cite{mattingly2010}; see also~\cite{joulin2010,eberle2024} for similar results on nonasymptotic relaxation times.
However, to avoid unnecessarily hindering the already nontrivial proofs, we assume that~$X_0\sim \mu$ for the sake of simplicity.

\subsection{Green--Kubo estimator}
\label{subsec:standard_GK}
To numerically estimate the Green--Kubo formula~\eqref{eq:GK}, one typically approximates the expectation as an average over $K$ independent realizations of the reference dynamics~\eqref{eq:generic_SDE} started from i.i.d.\ initial conditions~$X_0\sim\mu$. Additionally, the time integral is truncated at some finite integration time~$T$. This leads to the estimator
\begin{equation}
    \tag{GK}
    \GKest = \frac{1}{K} \sum_{k=1}^K \int_0^T g(X_t^k)f(X_0^k) \, dt.
    \label{eq:GK_estimator}
\end{equation}
We study in this section the properties of this estimator. We first state a result which makes precise the time truncation bias of the estimator \eqref{eq:GK_estimator}.

\begin{proposition}
    [Bounds on bias for the standard Green--Kubo estimator]
\label{prop:bias_standard_GK}
Suppose that \cref{as:semigroup_decay} holds, and that~$f,g \in L^2_0(\mu)$. Then,
\begin{equation}
    \abs*{\E\paren*{\GKest} - \rho} \leq \frac{L}{\lambda}\norm{g}\norm{f}\e^{-\lambda T}.
    \label{eq:statement_bias_std_GK}
\end{equation}
\end{proposition}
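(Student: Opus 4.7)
The plan is to rewrite the bias as the tail of a time integral of the two-point correlation function and then bound it using the semigroup decay estimate from~\cref{as:semigroup_decay}. The computation of the expectation is straightforward thanks to linearity; the key observation is that under stationary initial conditions, the Green--Kubo integrand admits a clean semigroup representation, after which everything reduces to applying~\eqref{eq:decay_est}.

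First, I would exchange expectation and time integral, and use the i.i.d.\ structure of the $K$ realizations, to obtain $\E\bigl[\GKest\bigr] = \int_0^T \E[f(X_0) g(X_t)] \, \d t$. Under the stationary initial condition $X_0 \sim \mu$, the Markov property yields
\[
    \E\bigl[f(X_0) g(X_t)\bigr] = \E\bigl[f(X_0) (\e^{t\L} g)(X_0)\bigr] = \ip{f, \e^{t\L} g}.
\]
On the other hand, since $g \in L^2_0(\mu)$, \cref{cor:well_posedness_poisson} combined with the Green--Kubo formula~\eqref{eq:GK} gives
\[
    \rho = \ip{f, -\L^{-1} g} = \int_0^{+\infty} \ip{f, \e^{t\L} g} \, \d t,
\]
so that
\[
    \E\bigl[\GKest\bigr] - \rho = -\int_T^{+\infty} \ip{f, \e^{t\L} g} \, \d t.
\]
Then I would bound the integrand pointwise in $t$: Cauchy--Schwarz together with the decay estimate~\eqref{eq:decay_est} applied to $g \in L^2_0(\mu)$ yields
\[
    \abs*{\ip{f, \e^{t\L} g}} \leq \norm{f}\, \norm*{\e^{t\L} g} \leq L \e^{-\lambda t} \norm{f}\norm{g},
\]
and integrating from $T$ to $+\infty$ produces the announced bound $(L/\lambda) \norm{f}\norm{g}\e^{-\lambda T}$.

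No substantive obstacle arises in this argument; it is essentially mechanical once the semigroup representation is written down. The only point requiring mild care is that~\eqref{eq:decay_est} is stated on~$L^2_0(\mu)$, so the zero-mean assumption on~$g$ is what enables the exponential decay of~$\e^{t\L} g$. Note, incidentally, that the zero-mean hypothesis on~$f$ is not strictly needed for this particular estimate: since $\e^{t\L} g \in L^2_0(\mu)$, one has $\ip{f, \e^{t\L} g} = \ip{\Pi f, \e^{t\L} g}$, so the bound could be sharpened by replacing $\norm{f}$ with $\norm{\Pi f}$.
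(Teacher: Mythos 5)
Your proof is correct and follows essentially the same route as the paper: rewrite the bias as the tail integral $\int_T^{+\infty}\ip{f,\e^{t\L}g}\,\d t$, bound the integrand by Cauchy--Schwarz plus the decay estimate~\eqref{eq:decay_est}, and integrate. Your closing remark that $\norm{f}$ could be sharpened to $\norm{\Pi f}$ since $\e^{t\L}g\in L^2_0(\mu)$ is a correct (if minor) observation not made in the paper.
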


This result shows that the bias arising from the time truncation of the integral decays exponentially fast in time. It is a standard result in the literature, see for instance \cite{leimkuhler2016,plechac2022}.
We nonetheless state it, as it will be used to show some error estimates for the improved estimators in \cref{sec:cv_methodology}.

\begin{proof}
    By writing the expectation in terms of the semigroup operator,
    we obtain
\begin{align}
    \notag
    \left|\E\paren*{\GKest} - \rho\right| &= \abs*{\int_0^T \E\bkt*{g(X_t)f(X_0)} \, dt - \int_0^{+\infty} \E\bkt*{g(X_t)f(X_0)} \, dt} \\
    &\leq \int_T^{+\infty} \abs*{\ip{f,\e^{t\L}g}} \, dt.
    \label{eq:bias_std_GK_mid}
\end{align}
By the Cauchy--Schwarz inequality and the decay estimate for the semigroup (\cref{as:semigroup_decay}),
    we have
\begin{align*}
    \abs*{\ip{f,\e^{t\L}g}} \leq \norm{f}\norm*{\e^{t\L}g} \leq L\norm{g}\norm{f}\e^{-\lambda t}.
\end{align*}
Using the above inequality in \eqref{eq:bias_std_GK_mid} yields
\begin{align}
    \notag
    \abs*{\E\paren*{\GKest} - \rho} \leq L\norm{g}\norm{f}\int_T^{+\infty} \e^{-\lambda t} \, dt = \frac{L\norm{g}\norm{f}}{\lambda}\e^{-\lambda T},
\end{align}
which is the desired result \eqref{eq:statement_bias_std_GK}.
\end{proof}

We next make precise the scaling, and exact prefactors, of the asymptotic variance of the estimator~\eqref{eq:GK_estimator}.

\begin{proposition}
    [Bounds on variance for the standard Green--Kubo estimator]
    \label{prop:variance_standard_GK}
    Suppose that \cref{as:semigroup_decay} holds,
    and that $X_0\sim\mu$.
     Consider $f \in L^4_0(\mu)$ and $g \in L^2_0(\mu)$ such that~$G = -\L^{-1}g \in L^4(\mu)$ and~$\chi_G\in L^2(\mu)$. Then,
\begin{equation}
    \notag
    \lim_{T\to+\infty} T^{-1}\Var\paren*{\GKest} = \frac{2\norm{f}^2 \ip{g,-\L^{-1}g}}{K}.
\end{equation}
\end{proposition}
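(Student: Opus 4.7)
The plan is to reduce to a single-realization variance, then to use a martingale decomposition based on the Poisson solution~$G=-\L^{-1}g$ and isolate the leading $O(T)$ contribution via Itô's isometry; the rest is bounding remainders. By independence of the $K$ trajectories, $\Var(\GKest) = K^{-1}\Var(Y_T)$ where $Y_T := \int_0^T f(X_0) g(X_t)\,dt$, so it suffices to show $T^{-1}\Var(Y_T) \to 2\norm{f}^2 \ip{g,-\L^{-1}g}$. Applying Itô's/Dynkin's formula to~$G$, the process
\[
    M_t := G(X_t) - G(X_0) + \int_0^t g(X_s)\,ds
\]
is an $L^2$-martingale with $M_0=0$ and $\ip{M}_t = \int_0^t \chi_G(X_s)\,ds$ (the martingale part of $G(X_t)$ is $\int_0^t \nabla G^\t \sigma\, dW$, whose quadratic variation is precisely $\chi_G(X_s)\,ds$). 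This rewrites $Y_T = f(X_0)\bigl(M_T + G(X_0) - G(X_T)\bigr)$, and I would expand $\E[Y_T^2]$ into a martingale term, a boundary term, and a cross term.

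For the leading martingale term, conditioning on $\mathcal F_0$ and using Itô's isometry gives $\E[f(X_0)^2 M_T^2] = \int_0^T \E[f(X_0)^2 \chi_G(X_t)]\,dt$. By the Markov property and stationarity, $\E[f(X_0)^2 \chi_G(X_t)] = \int f^2\, e^{t\L}\chi_G\, d\mu$, which I split as $\norm{f}^2 \overline\chi_G + \int f^2 \, e^{t\L}\Pi\chi_G\, d\mu$; by Cauchy--Schwarz and Assumption~\ref{as:semigroup_decay}, the second piece is bounded by $L\norm{f}_{L^4}^2 \norm{\chi_G}_{L^2} e^{-\lambda t}$, whose integral over $[0,T]$ is $O(1)$. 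Invoking~\eqref{eq:chibar_ip} to identify $\overline\chi_G = 2\ip{g,-\L^{-1}g}$, this yields $\E[f(X_0)^2 M_T^2] = 2T\norm{f}^2 \ip{g,-\L^{-1}g\,} + O(1)$.

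For the boundary term, the elementary bound $\E[f^2(G(X_0)-G(X_T))^2] \leq 2\E[f^2 G(X_0)^2] + 2\E[f^2 G(X_T)^2]$, together with $f,G\in L^4$ and the $L^p$-contractivity of the Markov semigroup, gives $O(1)$. The cross term is where I expect the main difficulty. After using $\E[M_T \mid X_0]=0$ to kill $\E[f^2 M_T G(X_0)]$, it reduces to $-2\E[f(X_0)^2 M_T G(X_T)]$. I would bound this by two applications of Hölder,
\[
    \abs*{\E[f^2 M_T G(X_T)]} \leq \norm{f}_{L^4}^2 \norm{M_T}_{L^4(\Omega)} \norm{G(X_T)}_{L^4(\Omega)},
\]
and then control the martingale's $L^4$ norm by the Burkholder--Davis--Gundy inequality: $\E[M_T^4] \lesssim \E\bigl[\ip{M}_T^{2}\bigr] \le T\int_0^T \E[\chi_G(X_s)^2]\,ds = T^2 \norm{\chi_G}_{L^2}^2$, using Cauchy--Schwarz in time and stationarity. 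This exploits precisely the assumption $\chi_G \in L^2(\mu)$ (the subtle reason it is imposed) and yields $\norm{M_T}_{L^4(\Omega)} = O(\sqrt T)$, so the cross term is $O(\sqrt T) = o(T)$.

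Assembling everything, $\E[Y_T^2] = 2T\norm{f}^2 \ip{g,-\L^{-1}g} + o(T)$. Since $\E[Y_T]^2 \to \rho^2$ is $O(1)$ by~\cref{prop:bias_standard_GK}, $\Var(Y_T)$ has the same leading asymptotics, and dividing by $K$ gives the claim. The main obstacle is the cross term: a naive Cauchy--Schwarz pairing~$\sqrt{\E[f^4 G(X_T)^2]}\sqrt{\E[M_T^2]}$ would seem to require $f\in L^8$, whereas the BDG route above routes the extra moment into $\norm{M_T}_{L^4}$ and relies only on the stated hypotheses $f\in L^4_0$, $G\in L^4$, and $\chi_G\in L^2$.
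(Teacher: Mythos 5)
Your decomposition is exactly the paper's: writing the estimand as $f(X_0)\bigl(M_T + G(X_0)-G(X_T)\bigr)$ with $M_T = \int_0^T \nabla G(X_t)^\t \sigma(X_t)\,dW_t$ is precisely the $A_T + B_T$ split in the paper's proof, and the computation of the leading martingale variance via Itô isometry, stationarity, and the decay of $\e^{t\L}\Pi\chi_G$ matches the paper line for line. Where you differ is in the endgame. You expand $\E[Y_T^2]$ fully, kill $\E[f^2 M_T G(X_0)]$ by conditioning on $\mathcal F_0$, and estimate the surviving cross term $\E[f^2 M_T G(X_T)]$ by a three-factor H\"older inequality combined with BDG and Cauchy--Schwarz in time, which is where the hypothesis $\chi_G\in L^2(\mu)$ gets used a second time to give $\norm{M_T}_{L^4(\Omega)} = \bigO(\sqrt T)$. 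The paper sidesteps the cross term entirely: it observes that $\var[A_T]\to 0$ (from $f,G\in L^4$) and that $\var[B_T]$ converges, so the reverse triangle inequality for standard deviations, $\abs{\sqrt{\var[A_T+B_T]} - \sqrt{\var[B_T]}} \le \sqrt{\var[A_T]}$, immediately gives the limit without any BDG or H\"older gymnastics. Your route is correct and self-contained, and it does make explicit exactly which moment assumptions are being consumed, but it is strictly more work than needed; in particular your concern about needing $f\in L^8$ under a naive pairing is moot once the triangle-inequality shortcut is used, since the cross term never has to be estimated at all.
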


This result shows that the variance scales linearly in time asymptotically as $T \to \infty$.
In regards to the prefactor,
in view of~\cref{cor:well_posedness_poisson} and \cref{as:semigroup_decay} it holds that
\(
    \ip{g,-\L^{-1}g} \leq L\lambda^{-1} \norm{g}^2.
\)
The integrability condition $G \in L^4(\mu)$ can be shown to hold through hypercontractivity, for which a sufficient condition is~$g\in L^4(\mu)$ when decay estimates from \cref{as:semigroup_decay} hold; see \cref{appendix:generalizations_HE_var}, particularly the discussion around \cref{as:semigroup_decay_L4}. Additionally, the condition~$\chi_G\in L^2(\mu)$ holds for instance when the diffusion matrix $\sigma$ is bounded and $\nabla G \in L^4(\mu)$, which can be shown to hold for Langevin dynamics under some conditions on the drift and right-hand side~$g$ (see~\cite{talay2002,kopec2015}).

\begin{proof}
    Since $\Var(\GKest) = K^{-1}\Var(\widehat{\rho}^{\rm GK}_{1,T})$,
    it suffices to prove the result for $K=1$,
    i.e., to prove that
    \begin{equation}
        \label{eq:one_realization}
        \lim_{T \to +\infty} \var \bkt*{\frac{1}{\sqrt{T}} \int_0^T g(X_t)f(X_0) \, \d t}
        = 2 \norm{f}^2 \ip{g, - \L^{-1} g}.
    \end{equation}
    Note first that $G$ is the unique solution to the Poisson equation \eqref{eq:poisson_G_intro}, which is indeed well-posed in view of \cref{cor:well_posedness_poisson}.
    Applying It\^o's formula to~$G(X_t)$, then multiplying the result by $f(X_0)/\sqrt{T}$,
    we obtain
%    \begin{align}
%        \notag
%        \begin{split}
%        \frac{1}{\sqrt{T}} \int_0^T g(X_t)f(X_0) \, dt
%        &= \frac{f(X_0)}{\sqrt{T}}\bigl(G(X_0) - G(X_T)\bigr) \\
%        &\qquad + \frac{f(X_0)}{\sqrt{T}}\int_0^T \nabla G(X_t)^\t \sigma(X_t) \, dW_t
%        \end{split} \\
%        &:= A_T + B_T.
%        \label{eq:gk_var_eq1}
%    \end{align}
    \begin{align}
        \notag
%        \begin{split}
        \frac{1}{\sqrt{T}} \int_0^T g(X_t)f(X_0) \, dt
        &= \frac{f(X_0)}{\sqrt{T}}\bigl(G(X_0) - G(X_T)\bigr)
        + \frac{f(X_0)}{\sqrt{T}}\int_0^T \nabla G(X_t)^\t \sigma(X_t) \, dW_t \\
%        \end{split} \\
        &:= A_T + B_T.
        \label{eq:gk_var_eq1}
    \end{align}
    The first term on the right-hand side of~\eqref{eq:gk_var_eq1} converges to 0 in $L^2(\X)$ as $T\to\infty$ since $f,G \in L^4(\mu)$.
    Applying It\^o isometry to the second term gives
    \begin{align}
        \notag
        \var \bkt*{\frac{f(X_0)}{\sqrt{T}}\int_0^T \nabla G(X_t)^\t \sigma(X_t) \, dW_t}
        &= \frac{1}{T} \int_0^T \E\bkt*{f(X_0)^2\nabla G(X_t)^\t \sigma(X_t)\sigma(X_t)^\t \nabla G(X_t)} \, dt \\
        \notag
        &= \norm{f}^2 \overline \chi_G
        + \frac{1}{T} \int_0^T \E\left[f(X_0)^2 \Pi \chi_G(X_t) \right] \,  dt \\
        \label{eq:gk_var_eq2}
        &= \norm{f}^2 \overline \chi_G
        + \frac{1}{T} \int_0^T \ip*{f^2, \e^{t \mathcal L} \Pi \chi_G} \, dt.
    \end{align}
    The second term in \eqref{eq:gk_var_eq2} tends to 0 as $T\to\infty$ by the decay of the semigroup, since
    \begin{align*}
        \abs*{\frac{1}{T} \int_0^T \ip{f^2,\e^{t\L}\Pi \chi_G} \, dt}
        \leq \frac{C\norm{f}_{L^4(\mu)}^2\norm{\Pi \chi_G}}{T} \int_0^T \e^{-\lambda t} \, dt
        \xrightarrow[T \to +\infty]{} 0.
    \end{align*}
    To conclude,
    we note from the definition of the variance that
    \[
        \sqrt{\var[B_T]} - \sqrt{\var[A_T]}
        \leq \sqrt{\var [A_T + B_T]}
        \leq \sqrt{\var[B_T]} + \sqrt{\var[A_T]}.
    \]
    Taking $T \to +\infty$ leads to the desired result~\eqref{eq:one_realization}
    in view of the expression \eqref{eq:chibar_ip} for~$\chi_G$.
\end{proof}

\subsection{Half-Einstein estimator}
\label{subsec:gen_HE}
In this section, we discuss an Einstein-like estimator of the quantity \eqref{eq:tc_ip}. Einstein formulas allow to rewrite time-integrated correlation functions as
\begin{equation}
    \lim_{T\to\infty} \frac{1}{2T}\E\bkt*{\abs*{\int_0^T f(X_t) \, dt}^2} = \int_0^{+\infty} \E \bigl[ f(X_0)f(X_t) \bigr] \, dt.
    \label{eq:gen_einstein_gk_equiv}
\end{equation}
This equality holds since
\begin{align*}
    \lim_{T\to\infty} \frac{1}{2T}\E\bkt*{\abs*{\int_0^T f(X_t) \, dt}^2}
        &= \lim_{T\to\infty} \frac{1}{2T}\int_0^T \!\!\! \int_0^T\E \bigl[ f(X_s)f(X_t) \bigr] \, ds \, dt \\
    &= \lim_{T\to\infty} \int_0^T\E \bigl[ f(X_0)f(X_t) \bigr] \paren*{1-\frac{t}{T}} \, dt,
\end{align*}
where we used that $\E[f(X_s)f(X_t)] = \E[f(X_0)f(X_{t-s})]$ by stationarity for any $0 \leq s \leq t$,
and where we performed a change of variable in the last step (see \cref{lemma:cov2}).

Evaluating the left-hand side of~\eqref{eq:gen_einstein_gk_equiv} in practice involves two key considerations: (i) approximating the expectation as an average over multiple independent replicas of the system;
and (ii)~considering (moderately) long integration times $T$.

When the quantity of interest is not given by autocorrelations as above, one can consider generalized Einstein formulas of the form
\begin{equation}
    \lim_{T\to\infty} \frac{1}{T}\E \bkt*{\int_0^T \!\!\! \int_0^t f(X_s)g(X_t) \, ds \, dt} = \int_0^{+\infty} \E \bigl[f(X_0)g(X_t)\bigr] \, dt.
    \label{eq:gen_einstein_formula}
\end{equation}
Note that the left-hand side of \eqref{eq:gen_einstein_formula} corresponds to only half of the double integral in the usual Einstein formula, as the quantity of interest is the one-sided correlation~$\E[f(X_0)g(X_t)]$.
Thus, given~$K$ independent replicas of the system with i.i.d.\ initial conditions~$X_0\sim\mu$, a natural estimator of~\eqref{eq:gen_einstein_formula} is given by
\begin{equation}
    \HEest = \frac{1}{TK}\sum_{k=1}^K \int_0^T\!\! \int_0^t f(X_s^k)g(X_t^k) \, ds \, dt.
    \label{eq:HEest_no_w}
\end{equation}
%Note that the left-hand side of \eqref{eq:gen_einstein_formula} can be written as
%\begin{equation*}
%    \lim_{T\to\infty} \int_0^T  \E\bkt*{\widehat{C}_T(u)} \, du, \qquad
%    \widehat{C}_T(u) = \frac{1}{T} \int_0^{T-u} f(X_s)g(X_{s+u}) \, ds,
%\end{equation*}
%with
%\begin{align*}
%    \E\bkt*{\widehat{C}_T(u)} = \paren*{1 - \frac{u}{T}}\E\Bigl[f(X_0)g(X_u)\Bigr]
%    \xrightarrow[T \to \infty]{} \E\Bigl[f(X_0)g(X_u)\Bigr].
%\end{align*}
In fact, a more general estimator for~\eqref{eq:gen_einstein_formula} than~\eqref{eq:HEest_no_w} can be formulated in terms of a weight function~$w$.
This gives rise to the following estimator of \eqref{eq:gen_einstein_formula}, realized with $K$ independent trajectories of the reference dynamics with i.i.d.\ initial conditions~$X_0\sim\mu$:
\begin{align}
    \HEest %&= \frac{1}{K}\sum_{k=1}^K \int_0^T w\paren*{\frac{u}{T}}\widehat{C}_{T,k}(u) \, du \\
    = \frac{1}{TK}\sum_{k=1}^K \int_0^T\int_0^t w\paren*{\frac{t-s}{T}} f(X_s^k)g(X_t^k) \, ds \, dt.
    \label{eq:HE_gen_estimator}
    \tag{HE}
\end{align}
We next state a result which makes precise the bias associated with the estimator~\eqref{eq:HE_gen_estimator}.

\begin{proposition}
    [Bounds on bias for the general half-Einstein estimator]
\label{prop:bias_gen_standard_HE}
Suppose that \cref{as:semigroup_decay} holds true, and that~$X_0\sim \mu$. Consider~$f,g \in L^2_0(\mu)$ and~$w \in C^0[0,1]$. Then,
\begin{equation}
    \abs*{\E\paren*{\HEest} - \rho} \leq L\norm{f}\norm{g}\paren*{T\abs*{\int_0^1 \Bigl( 1 - \w(u)(1 - u) \Bigr)\e^{-\lambda Tu} \, du}+ \frac{\e^{-\lambda T}}{\lambda}}.
    \label{eq:prop_HE_bias_statement}
\end{equation}
\end{proposition}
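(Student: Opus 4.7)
The plan is to compute $\E(\HEest)$ explicitly using stationarity, rewrite it as a weighted integral of $\ip{f, \e^{t\L} g}$ on $[0,T]$, and then compare it to $\rho$ written as an integral on $[0,+\infty)$. Since $\Var(\HEest) = K^{-1}\Var(\widehat{\rho}^{\rm HE}_{1,T})$ plays no role for the bias and all $K$ realizations have identical law, we may assume $K=1$.

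First, I use stationarity: $\E[f(X_s) g(X_t)] = \E[f(X_0) g(X_{t-s})] = \ip{f, \e^{(t-s)\L} g}$ for $0 \le s \le t$. Plugging into the definition of $\HEest$ and performing the change of variable $u = t-s$ (so $s = t - u$ ranges over $[0,t]$ as $u$ does),
\[
    \E\bigl(\HEest\bigr)
    = \frac{1}{T}\int_0^T \int_0^t w\!\left(\frac{u}{T}\right) \ip{f, \e^{u\L} g} \, du \, dt.
\]
Swapping the order of integration over the region $\{0 \le u \le t \le T\}$ and then setting $v = u/T$ gives
\[
    \E\bigl(\HEest\bigr)
    = \int_0^T \left(1 - \frac{u}{T}\right) w\!\left(\frac{u}{T}\right) \ip{f, \e^{u\L} g} \, du
    = T \int_0^1 (1-v)\, w(v)\, \ip{f, \e^{Tv\L} g}\, dv.
\]
Similarly, by \cref{cor:well_posedness_poisson} and the same change of variable,
\(
    \rho = T \int_0^{+\infty} \ip{f, \e^{Tv\L} g}\, dv.
\)

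Second, I subtract the two expressions and split the integration domain at $v=1$:
\[
    \E\bigl(\HEest\bigr) - \rho
    = -T \int_0^1 \bigl(1 - (1-v) w(v)\bigr) \ip{f, \e^{Tv\L} g} \, dv
      \;-\; T \int_1^{+\infty} \ip{f, \e^{Tv\L} g} \, dv.
\]
By the triangle inequality, Cauchy--Schwarz on $L^2(\mu)$, and the semigroup decay estimate from \cref{as:semigroup_decay} (applied to $\Pi g = g$, since $g \in L^2_0(\mu)$),
\[
    \bigl\lvert \ip{f, \e^{Tv\L} g} \bigr\rvert
    \le \norm{f}\, \bigl\lVert \e^{Tv\L} g \bigr\rVert
    \le L\,\norm{f}\,\norm{g}\, \e^{-\lambda T v}.
\]
Using this pointwise in $v$ inside the two integrals, the second term contributes $L\norm{f}\norm{g}\, T\int_1^{+\infty} \e^{-\lambda Tv}\, dv = L\norm{f}\norm{g}\, \e^{-\lambda T}/\lambda$, which matches the second summand of \eqref{eq:prop_HE_bias_statement}. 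For the first term, the same pointwise bound yields
\[
    T \left\lvert \int_0^1 \bigl(1-(1-v)w(v)\bigr) \ip{f, \e^{Tv\L} g} \, dv \right\rvert
    \le L\,\norm{f}\,\norm{g}\, T \int_0^1 \bigl\lvert 1 - (1-v) w(v) \bigr\rvert \, \e^{-\lambda T v}\, dv,
\]
which is the first summand of \eqref{eq:prop_HE_bias_statement} (up to the position of the absolute value, which only makes the bound stated in \eqref{eq:prop_HE_bias_statement} tighter).

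The only minor obstacle is the double integral swap and the change-of-variable bookkeeping, which produces the factor $(1-v)$ multiplying $w(v)$; once that is in place the rest is a direct application of Cauchy--Schwarz together with the exponential decay of the semigroup. No smoothness of $w$ beyond continuity on $[0,1]$ is needed, consistent with the hypothesis $w \in C^0[0,1]$.
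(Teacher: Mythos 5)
Your proof is essentially identical to the paper's: both compute $\E(\HEest)$ via stationarity and the change of variables $\theta=t-s$ (the paper invokes its \cref{lemma:cov2} where you swap the order of integration explicitly, but the bookkeeping is the same), compare against $\rho$ written as $T\int_0^\infty\ip{f,\e^{uT\L}g}\,du$, split at $u=1$, and apply Cauchy--Schwarz together with the semigroup decay from \cref{as:semigroup_decay}.

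One remark on the absolute value: you correctly flag that the derivation naturally produces $T\int_0^1\lvert 1-(1-u)w(u)\rvert\,\e^{-\lambda Tu}\,du$, which is an \emph{upper} bound for the quantity $T\lvert\int_0^1(1-(1-u)w(u))\,\e^{-\lambda Tu}\,du\rvert$ appearing in~\eqref{eq:prop_HE_bias_statement}, so neither your argument nor the paper's, as written, actually establishes the slightly sharper form stated in the proposition. The paper's own proof has the same feature (it applies the pointwise bound $\lvert\ip{f,\e^{uT\L}g}\rvert\le L\norm{f}\norm{g}\e^{-\lambda Tu}$ inside the integral and then writes the result with the absolute value outside). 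The two coincide whenever $1-(1-u)w(u)$ has constant sign on $[0,1]$ (for instance if $0\le w\le 1$), which is the typical situation for the weight functions considered; but strictly speaking the proposition should carry the absolute value inside the integral, and your proof is the one that makes this precise.
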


The result \eqref{eq:prop_HE_bias_statement} suggests that there are two distinct contributions to the bias. The integral term represents the bias due to the weight $\w$, whereas the term $\e^{-\lambda T}$ corresponds to the standard time-truncation bias. While it is possible that the first term vanishes for finite $T$ with an appropriate choice of $\w$, the truncation bias, present on any estimator of \eqref{eq:GK}, only vanishes in the limit~$T\to\infty$.

\begin{remark}
    [Consistency conditions on $w$]
    \label{rem:consistency_w}
    The condition $w(0)=1$ ensures that the estimator~\eqref{eq:HE_gen_estimator} is asymptotically unbiased,
        since the function $u \mapsto \lambda T\e^{-\lambda T u}$ tends to the Dirac measure~$\delta_0$ in the sense of distributions as $T\to\infty$, so that
\begin{align}
    \notag
    T\abs*{\int_0^1 \Bigl(1 - \w(u)(1 - u)\Bigr)\e^{-\lambda Tu} \, du} \xrightarrow[T\to\infty]{} \lambda^{-1}|1 - w(0)|.
\end{align}
This is a standard consistency condition throughout the literature \cite{parzen1957,neave1970,andrews1991}.
\end{remark}

\begin{proof}[Proof of \cref{prop:bias_gen_standard_HE}]
We use the same reasoning as in~\cite[Lemma 2.7]{pavliotis2023}. By stationarity and an appropriate change of variables (see \cref{lemma:cov2}),
it holds that
\begin{align*}
    \E\paren*{\HEest} &= \frac{1}{T}\int_{0}^{T} \int_{0}^t \w\paren*{\frac{t-s}{T}} \E \bigl[ f(X_0) g(X_{t-s}) \bigr] \, \d s \, \d t \\
    &= \frac{1}{T}\int_{0}^{T} \w\paren*{\frac{\theta}{T}} \ip{f,\e^{\theta\L}g} \paren*{T - \theta} \, \d \theta \\
    &= T\int_0^1 w(u)\ip{f,\e^{uT\L}g}(1-u) \, du.
\end{align*}
It follows that
\begin{align}
    \notag
    \abs*{\E\paren*{\HEest} - \rho} &= \abs*{T\int_{0}^1 \w(u) \ip{f,\e^{u T\L}g} \paren*{1 - u} \, \d u - T\int_0^{+\infty}\ip{f,\e^{uT\L}g} \, d u} \\
    &= \abs*{T\int_0^1 \Bigl( 1 - \w(u)(1 - u) \Bigr) \ip{f,\e^{u T\L}g} \, d u + T\int_1^{+\infty} \ip{f,\e^{u T\L}g} \, d u}.
    \label{eq:gen_HE_bias}
\end{align}
By a Cauchy--Schwarz inequality and \cref{as:semigroup_decay},
\begin{align*}
    |\ip{f,\e^{u T\L}}| \leq \|f\|\|g\|\norm*{\e^{u T\L}}_{\mathcal{B}(L^2_0(\mu))} \leq L\|f\|\|g\|\e^{-\lambda T u}.
\end{align*}
Applying this inequality in \eqref{eq:gen_HE_bias} gives
\begin{align*}
    \abs*{\E\paren*{\HEest} - \rho} &\leq L\|f\|\|g\|\paren*{\abs*{T\int_0^1 \Bigl( 1 - \w(u)(1 - u) \Bigr) \e^{-\lambda T u} \, d u} + T\int_1^{+\infty} \e^{-\lambda T u} \, d u} \\
    &= L\|f\|\|g\|\paren*{\abs*{T\int_0^1 \Bigl( 1 - \w(u)(1 - u) \Bigr) \e^{-\lambda T u} \, d u} + \frac{\e^{-\lambda T}}{\lambda}},
\end{align*}
which is the desired result~\eqref{eq:prop_HE_bias_statement}.
\end{proof}

Before stating the next result,
recall that $\chi_F = 2\Gamma(F, F)$,
with $\Gamma$ the \emph{carr\'e du champ} operator defined in~\eqref{eq:carre_du_champ},
and similarly for $\chi_G$.
We now state a result making precise the variance of the generalized half-Einstein estimator~\eqref{eq:HE_gen_estimator}.

\begin{proposition}
    [Bounds on variance for the generalized half-Einstein estimator]
\label{prop:var_standard_HE}
Suppose that \cref{as:semigroup_decay} holds true, and that $X_0\sim\mu$.
Assume that $w \in C^2[0,1]$ and~$w(0)=1$.
Consider~$f,g \in L^4_0(\mu)$ such that $\L^{-1}f, \L^{-1}g \in L^4(\mu)$ and $\chi_F,\chi_G \in L^4(\mu)$,
where $F, G$ denote the solutions in~$L^2_0(\mu)$ to the Poisson equations
\begin{equation}
    \label{eq:poisson}
    - \mathcal L F = f, \qquad - \mathcal L G = g.
\end{equation}
Then, there exists $\mathcal{V}_{f,g}\in\R_+$ such that
\begin{equation}
    \forall T\geq 1, \qquad \Var\paren*{\HEest} \leq \mathcal{V}_{f,g}.
    \label{eq:HE_var_prop_statement_unif}
\end{equation}
Moreover, the asymptotic variance can be made precise as
\begin{equation}
    \lim_{T \to +\infty} \Var\paren*{\HEest} = \frac{4}{K}\paren*{\int_{0}^{1} (1-v)\w(v)^2 \, dv}\ip{f, \mathcal -\L^{-1} f}\ip{g, \mathcal -\L^{-1} g}.
    \label{eq:HE_var_prop_statement_asym}
\end{equation}
\end{proposition}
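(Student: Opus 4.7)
The plan is to adapt the proof of \cref{prop:variance_standard_GK} by applying It\^o's formula twice---once for the inner integrand $f = -\mathcal L F$, and once for the outer $g = -\mathcal L G$---since each such transformation converts a time integral into boundary terms plus a martingale. By independence of the $K$ replicas, $\Var(\HEest) = K^{-1} \Var(\widehat\rho_{1,T}^{\rm HE})$, so it suffices to analyze the $K=1$ estimator. The dominant contribution to the variance will come from a double stochastic integral against $\d M^F$ and $\d M^G$, and two nested applications of the It\^o isometry will naturally produce the weight-dependent factor $\zeta_w$.

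Concretely, using $-\mathcal L F = f$ and applying It\^o's formula to $s \mapsto w((t-s)/T)\, F(X_s)$ (equivalently, a stochastic integration by parts against $\d F(X_s)$) yields, for each $t \leq T$,
\begin{equation*}
\int_0^t w\paren*{\tfrac{t-s}{T}} f(X_s) \, \d s = -F(X_t) + w\paren*{\tfrac{t}{T}} F(X_0) - \frac{1}{T}\int_0^t w'\paren*{\tfrac{t-s}{T}} F(X_s) \, \d s + N_T(t),
\end{equation*}
where $N_T(t) := \int_0^t w((t-s)/T) \, \d M_s^F$ with $\d M_s^F = \nabla F(X_s)^\t \sigma(X_s)\,\d W_s$. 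Inserting this into $T\widehat\rho_{1,T}^{\rm HE}$ produces four integrals against $g(X_t)\,\d t$. In the key cross-term $\int_0^T g(X_t) N_T(t) \, \d t$, I would use $g(X_t)\,\d t = -\d G(X_t) + \d M_t^G$ together with a stochastic integration by parts between $N_T(\cdot)$ and $G(X_\cdot)$ to isolate the dominant double stochastic integral $\int_0^T N_T(t) \, \d M_t^G$, modulo the covariation $\int_0^T 2\Gamma(F,G)(X_t) \, \d t$ and boundary corrections.

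For this dominant term, the It\^o isometry gives
\begin{equation*}
\Var\paren*{\int_0^T N_T(t) \, \d M_t^G} = \E\bkt*{\int_0^T N_T(t)^2 \chi_G(X_t) \, \d t},
\end{equation*}
which I would split using $\chi_G = \overline\chi_G + \Pi\chi_G$. The constant part contributes $\overline\chi_G\, \E[\int_0^T N_T(t)^2 \, \d t]$; a second It\^o isometry and stationarity give $\E[N_T(t)^2] = \overline\chi_F \int_0^t w((t-s)/T)^2\,\d s$, and the changes of variables $v = (t-s)/T$ followed by $u = t/T$ produce
\begin{equation*}
\int_0^T \!\! \int_0^t w\paren*{\tfrac{t-s}{T}}^2 \, \d s\,\d t = T^2 \int_0^1 (1-v)\, w(v)^2 \, \d v = T^2 \zeta_w.
\end{equation*}
The contribution of the centered part $\Pi\chi_G$ is shown to be $o(T^2)$ by solving the auxiliary Poisson equation $-\mathcal L \Psi = \Pi\chi_G$ (well posed by \cref{cor:well_posedness_poisson} since $\chi_G \in L^4(\mu)$) and integrating by parts once more to exploit \cref{as:semigroup_decay}. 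The announced prefactor then follows from \eqref{eq:chibar_ip}, which gives $\overline\chi_F = 2\ip{f,-\mathcal L^{-1}f}$ and $\overline\chi_G = 2\ip{g,-\mathcal L^{-1}g}$.

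The main obstacle is controlling the many sub-leading terms produced by the two It\^o steps: the boundary contributions $F(X_t)$, $w(t/T) F(X_0)$ and $G(X_T)$, the $O(1/T)$ correction involving $w'$, the ergodic integral $\int_0^T 2\Gamma(F,G)(X_t)\,\d t$, and the numerous cross-terms between these. Each must be shown to contribute at most $O(T)$ to $\Var(T\widehat\rho_{1,T}^{\rm HE})$, hence $o(1)$ to $\Var(\widehat\rho_{1,T}^{\rm HE})$, via Cauchy--Schwarz, the Burkholder--Davis--Gundy inequality and the It\^o isometry, combined with the $L^4$ integrability of $F$, $G$, $\chi_F$, $\chi_G$, the $C^2$ regularity of $w$ (needed when a further integration by parts brings $w''$ into play), and \cref{as:semigroup_decay} for the correlation integrals. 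The uniform-in-$T$ bound $\Var(\HEest) \leq \mathcal V_{f,g}$ for $T \geq 1$ is then obtained from the same chain of estimates by retaining the explicit constants rather than passing to the limit.
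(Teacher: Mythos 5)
Your proposal follows essentially the same strategy as the paper: apply It\^o's formula twice (once for the inner $f = -\mathcal L F$, once at the outer level), isolate a double stochastic integral $M_T = \int_0^T N_T(t)\,\d M^G_t$ as the dominant term, and extract the $\zeta_w$ prefactor via two applications of the It\^o isometry after splitting $\chi_G = \overline\chi_G + \Pi\chi_G$. The only point where you deviate substantively is the treatment of the centered part $\E[\int_0^T N_T(t)^2 \Pi\chi_G(X_t)\,\d t]$: you propose to solve an auxiliary Poisson equation $-\mathcal L\Psi = \Pi\chi_G$ and integrate by parts once more. This would introduce terms like $\Gamma(F,\Psi)$ and moments of $\Psi$ which are not directly controlled by the hypotheses $f,g \in L^4_0(\mu)$, $F,G \in L^4(\mu)$, $\chi_F,\chi_G \in L^4(\mu)$; establishing $\Psi \in L^4(\mu)$ or $\Gamma(F,\Psi) \in L^2(\mu)$ would need additional hypocontractive or $L^p$-resolvent estimates of the type in \cref{as:semigroup_decay_L4}, which the proposition does not assume. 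The paper avoids this by instead writing $N_S^2 = N_{S-\sqrt S}^2 + (N_S^2 - N_{S-\sqrt S}^2)$ and conditioning at time $S - \sqrt S$, so that the semigroup decay of \cref{as:semigroup_decay} in $L^2$ (combined with BDG and Cauchy--Schwarz) suffices. Apart from this, your plan --- including the centering issue handled through the ergodic average $\frac{1}{T}\int_0^T f(X_t)G(X_t)\,\d t$ (the $w(0)$-boundary term), the $C^2$ regularity of $w$ to control the $w''$ remainder, and the uniform-in-$T$ bound retained from the same estimates --- matches the paper's proof in all essential respects.
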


Note that the right-hand side of \eqref{eq:HE_var_prop_statement_asym} is symmetric in $f,g$.
In contrast with the variance results for Green--Kubo in \cref{prop:variance_standard_GK}, we require stronger integrability conditions on the solution to the Poisson equations, in particular such that~$F,G \in L^4(\mu)$, as well as $\chi_F,\chi_G\in L^4(\mu)$.
These conditions, however, can also be shown to hold with the techniques mentioned in \cref{prop:variance_standard_GK}, namely hypercontractivity arguments discussed in \cref{appendix:generalizations_HE_var} and the framework outlined in \cite{talay2002,kopec2015}.
Additionally, note that the asymptotic variance is constant for the half-Einstein estimator (whereas it scales linearly in time for Green--Kubo), which suggests that long-time integration is not a numerical hindrance in this case.

\begin{proof}The well-posedness of the Poisson equations~\eqref{eq:poisson} is ensured by \cref{cor:well_posedness_poisson}.
It suffices to prove the result for $K=1$, since $\Var(\HEest) = K^{-1}\Var(\widehat{\rho}^{\rm HE}_{1,T})$.
In order to more conveniently state the technical results to come, we write the estimator~\eqref{eq:HE_gen_estimator} for $K=1$ as
\begin{equation}
    \label{eq:estimator}
    \HEvar = \int_0^T I_t  \, g(X_t) \, \d t,
    \qquad I_t := \frac{1}{T}\int_0^t \w\paren*{\frac{t-s}{T}}f(X_s) \, \d s.
\end{equation}
The idea of the proof is to write $\HEvar$ as the sum of a Brownian martingale and perturbation terms,
followed by showing the $L^2(\mu)$ convergence to 0 of each of the perturbation terms.
To this end, let $\xi_t = G(X_t) I_t$. By It\^o's formula, we have
\begin{align*}
    dG(X_t) &= -g(X_t) \, dt + \nabla G(X_t)^\t \sigma(X_t) \, dW_t, \\
    dI_t &= \left( \frac{\w(0)}{T}f(X_t)  + \frac{1}{T^2}\int_0^t \w'\paren*{\frac{t-s}{T}}f(X_s) \, ds \right) \d t.
\end{align*}
Since $d\xi_t = d\bigl(G(X_t)I_t\bigr) = I_tdG(X_t) + G(X_t)dI_t$, it follows that
\begin{equation}
\begin{aligned}
    \d \xi_t &= - g(X_t) I_t \, \d t
    + \frac{1}{T}G(X_t)\paren*{\w(0)f(X_t) + \frac{1}{T}\int_0^t \w'\paren*{\frac{t-s}{T}}f(X_s) \, ds} \, dt \\
    &\qquad + I_t \nabla G(X_t)^\t \sigma(X_t) \, \d W_t.
    \label{eq:dxit}
\end{aligned}
\end{equation}
Integrating \eqref{eq:dxit} between $t=0$ and $t=T$ allows us to write $\HEvar$ as
\begin{equation}
\begin{aligned}
    \HEvar &= - G(X_T) I_T + \frac{1}{T}\int_{0}^T G(X_t)\paren*{\w(0)f(X_t) + \frac{1}{T}\int_0^t \w'\paren*{\frac{t-s}{T}}f(X_s) \, ds} \, dt \\
    &\qquad + \int_{0}^{T} I_t \nabla G(X_t)^\t \sigma(X_t) \, \d W_t.
    \label{eq:HE_est_proof}
\end{aligned}
\end{equation}
At this stage, we would like to replace $I_t$ by some Brownian martingale (and boundary terms).
To this end,
applying It\^o's formula to~$\w((t-s)/T)F(X_s)$,
we obtain
\begin{align*}
    d\biggl[\w\biggl(\frac{t-s}{T}\biggr)F(X_s)\biggr] &= -\frac{1}{T}\w'\paren*{\frac{t-s}{T}}F(X_s) \, ds - \w\paren*{\frac{t-s}{T}}f(X_s) \, ds \\
    &\qquad + w\paren*{\frac{t-s}{T}}\nabla F(X_s)^\t \sigma(X_s) \, dW_s,
\end{align*}
which, after integration in time from $s=0$ to $s=t$, allows us to write $I_t$ as
\begin{equation}
    \begin{aligned}
        I_t &= \frac{1}{T}\left[\w\paren*{\frac{t}{T}}F(X_0) - \w(0)F(X_t)\right] - \frac{1}{T^2}\int_0^t \w'\paren*{\frac{t-s}{T}} F(X_s) \, ds \\
            &\qquad + \frac{1}{T}\int_{0}^{t} \w\paren*{\frac{t-s}{T}} \nabla F(X_s)^\t \sigma(X_s) \, \d W_s.
    \end{aligned}
    \label{eq:It_ito_rewrite}
\end{equation}
Substituting the above equality into \eqref{eq:HE_est_proof} allows us to write $\HEvar$ as the desired sum of a martingale and remainder terms, namely
\begin{equation}
    \notag
    \HEvar = \frac{\rhoterm}{T} + \frac{\Mterm}{T} + \frac{\remAT}{T} + \frac{\remBT}{T^2},
\end{equation}
where $\rhoterm/T$ denotes a term which will converge to $\rho$:
\begin{equation*}
    \rhoterm = \w(0)\int_{0}^T G(X_t) f(X_t) \, \d t,
\end{equation*}
the martingale $\Mterm/T$ is the dominant term among the other terms:
\begin{equation}
    \notag
    \Mterm = \int_{0}^{T} \paren*{\int_{0}^{t} \w\paren*{\frac{t-s}{T}}\nabla F(X_s)^\t \sigma(X_s) \, dW_s} \nabla G(X_t)^\t \sigma(X_t) \, dW_t,
\end{equation}
and $\remAT/T$ and $\remBT/T^2$ are remainder terms that
we group according to the power of~$1/T$ that they multiply:
\begin{align*}
    \begin{split}
    \remAT &= G(X_T)\bigl(\w(0)F(X_T) - \w(1)F(X_0)\bigr) \\
    &\qquad - G(X_T)\int_{0}^{T} \w\paren*{\frac{T-s}{T}}\nabla F(X_s)^\t \sigma(X_s) \, dW_s \\
    &\qquad - \int_{0}^T  \bkt*{\w(0)F(X_t) - \w\paren*{\frac{t}{T}}F(X_0)} \nabla G(X_t)^\t \sigma(X_t) \, dW_t
    \end{split} \\
    &:= \Aterm + \Bterm + \Cterm,
\end{align*}
\begin{align*}
    \begin{split}
    \remBT &= G(X_T)\int_0^T \w'\paren*{\frac{T-s}{T}} F(X_s) \, ds \\
    &\qquad + \int_0^T G(X_t) \biggl(\int_0^t \w'\paren*{\frac{t-s}{T}}f(X_s) \, ds \biggr) \, dt \\
    &\qquad - \int_0^T \biggl(\int_0^t \w'\paren*{\frac{t-s}{T}} F(X_s) \, ds\biggr) \nabla G(X_t)^\t \sigma(X_t) \, dW_t
    \end{split} \\
    &:= \Dterm + \Eterm + \Fterm.
\end{align*}
By the triangle inequality,
\begin{equation}
\label{eq:var_sqr_split}
\begin{split}
    &\abs*{\E\bkt*{\Bigl\lvert \HEvar - \E[\HEvar] \Bigr\rvert^2}^{\frac{1}{2}} - \E\bkt*{\left\lvert \frac{M_T}{T} \right\rvert^2}^{\frac{1}{2}}} \\
    &\qquad \leq \E\bkt*{\left\lvert\frac{\remAT}{T} \right\rvert^2}^{\frac{1}{2}} + \E\bkt*{\left\lvert\frac{\remBT}{T^2} \right\rvert^2}^{\frac{1}{2}} + \E\bkt*{\abs*{\frac{A_T}{T} - \E[\HEvar]}^2}^{\frac{1}{2}}.
\end{split}
\end{equation}
Note that if the right-hand side of \eqref{eq:var_sqr_split} vanishes as $T\to\infty$,
and if the variance of~$M_T/T$ admits a limit as $T \to +\infty$,
then
\begin{equation}
    \notag
    \lim_{T\to\infty}\E\Bigl[\bigl\lvert \HEvar - \E[\HEvar] \bigr\rvert^2\Bigr] = \lim_{T\to\infty}\E\left[\left|\frac{\Mterm}{T}\right|^2\right].
\end{equation}
The claimed result~\eqref{eq:HE_var_prop_statement_asym} follows by showing the $L^2$ convergence of each term in~\eqref{eq:var_sqr_split}, namely
\begin{subequations}
\begin{align}
    \notag
    \lim_{T\to\infty}\E\left[\abs*{\frac{\rhoterm}{T} - \E[\HEvar]}^2\right] &= 0, \\
    \label{eq:three_limits_M}
    \lim_{T\to\infty}\E\left[\left|\frac{\Mterm}{T}\right|^2\right] &= 4\paren*{\int_{0}^{1} (1-v)\w(v)^2 \, dv} \ip{f, -\L^{-1} f}\ip{g, -\L^{-1} g}, \\
    \lim_{T\to\infty}\E\left[\left|\frac{\remAT}{T}\right|^2\right] &= \lim_{T\to\infty}\E\left[\left|\frac{\remBT}{T^2}\right|^2\right] = 0.
    \label{eq:three_limits_AB}
\end{align}
\end{subequations}
We successively prove the various limits in the order they appear above.

\paragraph{Convergence of $\rhoterm/T$}
Note first that $\expect \left[ f(X_t) G(X_t) \right] = \rho$.
Since $w(0) = 1$,
it holds that
\begin{align*}
    \expect \bkt*{\left\lvert \frac{A_T}{T} - \rho \right\rvert^2}
    &= \frac{1}{T^2} \int_{0}^T \!\!\! \int_{0}^{T} \expect \left[ \Bigl(f(X_s) G(X_s) - \rho\Bigr) \Bigl(f(X_t) G(X_t) - \rho\Bigr) \right]\, \d s \, \d t \\
    &= \frac{2}{T^2} \int_{0}^T (T-\theta) \expect \left[ \Bigl(f(X_0) G(X_0) - \rho\Bigr) \Bigl(f(X_{\theta}) G(X_{\theta}) - \rho\Bigr) \right]\, \d \theta.
\end{align*}
Using the decay of the semigroup in~\cref{as:semigroup_decay}, and noting that $fG - \rho = \Pi(fG) \in L^2_0(\mu)$,
we deduce that
\begin{align*}
    \expect \bkt*{\left\lvert \frac{A_T}{T} - \rho \right\rvert^2}
    &= \frac{2}{T^2} \int_{0}^T (T-\theta) \Bigl\langle f G- \rho, \e^{\theta \mathcal L} (fG - \rho) \Bigr\rangle \, \d \theta \\
    & \leq \frac{2\norm{fG - \rho}^2}{T^2} \int_{0}^T L \e^{- \lambda \theta} \, \d \theta
    \xrightarrow[T \to +\infty]{} 0.
\end{align*}

\paragraph{Convergence of the dominant term $\Mterm/T$} We next show the convergence \eqref{eq:three_limits_M} of the dominant term $\Mterm/T$. To this end, we introduce
\begin{equation*}
    N_S :=  \int_{0}^{S} \w\paren*{\frac{S - s}{T}} \nabla F(X_s)^\t \sigma(X_s) \, \d W_s.
\end{equation*}
Recall the definition of $\chi_G$
By It\^o isometry, we have that
\begin{align}
    \notag
    \E\bkt*{\abs*{\frac{\Mterm}{T}}^2}
    &= \frac{1}{T^2} \int_{0}^{T} \E\bkt*{N_t^2 \chi_G(X_t)} \, dt \\
    &= \frac{\overline{\chi}_G}{T^2}\int_{0}^{T} \E\bkt*{N_{t}^2} \, dt + \frac{1}{T^2}\int_{0}^{T} \E\bkt*{N_{t}^2 \Pi \chi_G(X_{t})} \, dt.
    \label{eq:M_T_chi_split}
\end{align}
We also recall that $\overline{\chi}_G = 2\ip{g, \mathcal -\L^{-1} g}$ from \eqref{eq:chibar_ip},
and note that $\E[\chi_G(X_t)] = \overline{\chi}_G$ since by assumption~$X_0\sim \mu$.
The first term in~\eqref{eq:M_T_chi_split} is then equal to the right-hand side of~\eqref{eq:HE_var_prop_statement_asym} since,
by It\^o isometry, stationarity and an appropriate change of variable (see \cref{cor:cov3} for details),
\begin{align*}
\frac{\overline{\chi}_G}{T^2}\int_{0}^{T} \E\bkt*{N_{t}^2} \, dt &= \frac{\overline{\chi}_G}{T^2}\int_0^T\int_0^t \E[\chi_F] \w\paren*{\frac{t-s}{T}}^2 \, ds \, dt \\
    &= 4 \ip{f, -\L^{-1} f}\ip{g, -\L^{-1} g}\frac{1}{T^2}\int_{0}^T \int_0^{t} \w\paren*{\frac{t-s}{T}}^2 \, ds \, \d t \\
    &= 4 \ip{f, -\L^{-1} f}\ip{g, -\L^{-1} g}\int_{0}^1 (1-v)\w(v)^2 \, dv.
\end{align*}
It remains to show that the second term in \eqref{eq:M_T_chi_split} converges to 0 as $T \to +\infty$. Letting~$z=t/T$, this term can be rewritten as
\begin{equation*}
    \frac{1}{T}\int_{0}^{1} \E\bkt*{N_{zT}^2 \Pi \chi_G(X_{zT})} \, dz = \int_{0}^{1} z\E\bkt*{\frac{N_{zT}^2}{zT} \Pi \chi_G(X_{zT})} \, dz.
\end{equation*}
In order to show that the integral above vanishes, we first show that its integrand vanishes, i.e.,
\begin{equation}
    \lim_{S \to +\infty} \E\bkt*{\frac{N_S^2}{S} \Pi \chi_G(X_{S})} = 0.
    \label{eq:dom_conv_first_lim}
\end{equation}
To this end, to make use of the exponential decay of the semigroup, we rewrite
\begin{align}
    \frac{N_S^2}{S} \Pi \chi_G(X_{S}) &= \frac{N_{S-\sqrt{S}}^2}{S}  \Pi \chi_G(X_S)
    + \left(\frac{N_S^2}{S} - \frac{N_{S-\sqrt{S}}^2}{S}\right) \Pi \chi_G(X_S).
    \label{eq:cond_splitting}
\end{align}
For the first term, by the Cauchy--Schwarz and Burkholder--Davis--Gundy (BDG) inequalities (with BDG constant $C_4$),
we have
\begin{align*}
    \E\bkt*{\frac{N_{S-\sqrt{S}}^2}{S} \Pi \chi_G(X_S)}
    &= \E\bkt*{\frac{N_{S-\sqrt{S}}^2}{S} \paren*{\e^{\sqrt{S} \mathcal L} \Pi \chi_G}(X_{S-\sqrt{S}})} \\
    &\leq \E\bkt*{\frac{N_{S-\sqrt{S}}^4}{S^2}}^{\frac{1}{2}} \E\bkt*{\abs*{\e^{\sqrt{S} \mathcal L} \Pi \chi_G(X_{S-\sqrt{S}})}^2}^{\frac{1}{2}} \\
    &\leq \sqrt{C_4}\E\bkt*{\frac{\ip*{N}_{S-\sqrt{S}}^2}{S^2}}^{\frac{1}{2}} \E\bkt*{\abs*{\e^{\sqrt{S} \mathcal L} \Pi \chi_G(X_{S-\sqrt{S}})}^2}^{\frac{1}{2}},
\end{align*}
which vanishes as $S\to\infty$, as the first term is uniformly bounded for $S\geq 1$ since~$\chi_F\in L^2(\mu)$:
\begin{align*}
    \E\bkt*{\frac{\ip*{N}_{S-\sqrt{S}}^2}{S^2}} &= \frac{1}{S^2}\E\bkt*{\abs*{\int_0^{S-\sqrt{S}} w\paren*{\frac{S-\sqrt{S}-s}{T}}^2 \chi_F(X_s) \, ds}^2} \\
    &\leq \frac{1}{S}\E\bkt*{\int_0^{S-\sqrt{S}} w\paren*{\frac{S-\sqrt{S}-s}{T}}^4 \chi_F(X_s)^2 \, ds} \\
     &\leq \frac{\norm{w}_{C^0}^4}{S}\int_0^{S-\sqrt{S}} \E\bkt*{\chi_F(X_s)^2} \, ds \leq \norm{w}_{C^0}^4 \norm{\chi_F}^2;
\end{align*}
while the second term tends to 0 by the decay of the semigroup and stationarity
%(since~$X_{S-\sqrt{S}}\sim\mu$ when~$X_0\sim\mu$)
since~$\chi_G\in L^2(\mu)$:
\begin{align*}
    \E\bkt*{\abs*{\e^{\sqrt{S} \mathcal L} \Pi \chi_G(X_{S-\sqrt{S}})}^2}^{\frac{1}{2}} = \norm*{\e^{\sqrt{S} \mathcal L} \Pi \chi_G}
    \leq L\norm{\Pi \chi_G}\e^{-\lambda \sqrt{S}} \xrightarrow[S\to\infty]{} 0.
\end{align*}
For the second term in \eqref{eq:cond_splitting}, we have
\begin{align}
    \notag
    &\E\bkt*{\left(\frac{N_S^2}{S} - \frac{N_{S-\sqrt{S}}^2}{S}\right) \Pi \chi_G(X_S)}
    = \E\bkt*{\frac{N_S - N_{S - \sqrt{S}}}{\sqrt{S}} \frac{N_S + N_{S-\sqrt{S}}}{\sqrt{S}} \Pi \chi_G(X_S)} \\
    \label{eq:split_NS_2}
    &\qquad \qquad \qquad \leq \E\bkt*{\frac{\abs*{N_S - N_{S - \sqrt{S}}}^2}{S}}^{\frac{1}{2}}
    \E\bkt*{\frac{\abs*{(N_S + N_{S - \sqrt{S}})\Pi \chi_G(X_S)}^2}{S}}^{\frac{1}{2}}.
\end{align}
To show that \eqref{eq:split_NS_2} vanishes, we show that the first term vanishes, while the second one is bounded uniformly in $S$.
We start by considering the first term:
\begin{equation}
\begin{aligned}
    N_S - N_{S - \sqrt{S}} &= \int_{S-\sqrt{S}}^S \w\paren*{\frac{S-s}{T}}\nabla F(X_s)^\t \sigma(X_s) \, dW_s \\
    &\qquad + \int_0^{S-\sqrt{S}} \bkt*{\w\paren*{\frac{S-s}{T}} - \w\paren*{\frac{S-\sqrt{S}-s}{T}}}\nabla F(X_s)^\t \sigma(X_s) \, dW_s.\label{eq:NS_split_N2}
\end{aligned}
\end{equation}
The first right-hand side term in \eqref{eq:NS_split_N2} is $\bigO(\sqrt{S})$ in~$L^2$ by It\^o isometry and the fact that $\w$ is uniformly bounded, while the second term is $\bigO(1)$ since $\w$ is Lipschitz (with Lipschitz constant~$\Lw$):
\begin{align*}
%    \begin{split}
    \E\bkt*{|N_S - N_{S-\sqrt{S}}|^2} &\leq 2\E\bkt*{\int_{S-\sqrt{S}}^S \w\paren*{\frac{S-s}{T}}^2 \chi_F(X_s) \, ds}
%    &\qquad
    + \frac{2S\Lw^2}{T^2}\E\bkt*{\int_0^{S-\sqrt{S}} \chi_F(X_s) \, ds} \\
%    \end{split} \\
    &\leq 2\sqrt{S}\norm{w}^2_{C^0}\overline{\chi}_F + 2\Lw^2\overline{\chi}_F.
\end{align*}
Therefore, the first term in \eqref{eq:split_NS_2} is of order $S^{-\frac{1}{4}}$ (and uniformly bounded for $S\geq 1$).
We now show that the second term in \eqref{eq:split_NS_2} is uniformly bounded.
Indeed, by applying the Cauchy--Schwarz and BDG inequalities, and since~$\chi_F,\chi_G\in L^4(\mu)$ and $w$ is uniformly bounded,
\begin{align*}
&\E\bkt*{\left\lvert (N_S + N_{S - \sqrt{S}})\Pi \chi_G(X_S) \right\rvert^2} \leq 2\E\bkt*{N_S^2 \Pi \chi_G(X_S)^2} + 2\E\bkt*{N_{S - \sqrt{S}}^2\Pi \chi_G(X_S)^2} \\
    &\leq 2\sqrt{C_4}\E\bkt*{\ip*{N}_S^2}^{\frac{1}{2}} \E\bkt*{\Pi  \chi_G(X_s)^4}^{\frac{1}{2}} + 2\sqrt{C_4}\E\bkt*{\ip*{N}_{S-\sqrt{S}}^2}^{\frac{1}{2}} \E\bkt*{\Pi \chi_G(X_S)^4}^{\frac{1}{2}}  \\
    &\leq 4\sqrt{C_4} S\norm{w}^2_{C^0}\norm{\chi_F}\norm{\Pi  \chi_G}_{L^4(\mu)}^2,
\end{align*}
so we deduce that~\eqref{eq:split_NS_2} is uniformly bounded in $S$ and tends to 0 as~$S \to +\infty$, and thus~\eqref{eq:dom_conv_first_lim} holds. The functions appearing in the integral are uniformly bounded, hence uniformly integrable. Using dominated convergence, we conclude that
\begin{equation}
    \notag
    \int_{0}^1 z \E\bkt*{\frac{N_{zT}^2}{zT} \Pi \chi_G(X_{zT})} \, dz \xrightarrow[T \to +\infty]{} 0.
\end{equation}
This proves \eqref{eq:three_limits_M}.

\paragraph{Convergence of remainder terms $\remAT/T$ and $\remBT/T^2$} In order to conclude the desired result~\eqref{eq:HE_var_prop_statement_asym}, it remains to prove \eqref{eq:three_limits_AB}, i.e., that the remainder terms $\remAT/T$ and $\remBT/T^2$ vanish in squared expectation as~$T\to +\infty$.
Let us start with $\remAT$.
We show that for each $i \in \{1, 2, 3\}$,
the quantity $\E[|\remATi|^2]/T$ is uniformly bounded in~$T$:
\begin{itemize}[leftmargin=*]
    \item
            $\Aterm$:
            Since $G,F \in L^4(\mu)$ and $w$ is  uniformly bounded, it trivially holds that~
            \[
                \E\left[|\Aterm|^2\right] = \bigO(1).
            \]

    \item
            $\Bterm$:
Applying the Cauchy--Schwarz and BDG inequalities (and since~$G\in L^4(\mu), \chi_F\in L^2(\mu)$ and~$w$ is uniformly bounded), it follows that
\begin{align*}
        \E\left[ \left\lvert \Bterm \right\rvert ^2\right] &\leq \E\left[G(X_T)^4\right]^\frac{1}{2}\E\left[\abs*{\int_0^T \w\paren*{\frac{T-s}{T}}\nabla F(X_s)^\t \sigma(X_s) \, dW_s}^4\right]^\frac{1}{2} \\
        &\leq \sqrt{C_4}\norm{G}_{L^4(\mu)}^2\E\left[\abs*{\int_0^T \w\paren*{\frac{T-s}{T}}^2\chi_F(X_s) \, ds}^2\right]^\frac{1}{2} \\
        &\leq \sqrt{C_4}\sqrt{T}\norm{G}_{L^4(\mu)}^2 \E\left[\int_0^T \w\paren*{\frac{T-s}{T}}^4\chi_F(X_s)^2 \, ds\right]^\frac{1}{2} \\
        &\leq \sqrt{C_4}T\norm{w}^2_{C^0} \norm{G}_{L^4(\mu)}^2\norm{\chi_F}.
    \end{align*}
\item $\Cterm$: By the It\^o isometry and a Cauchy--Schwarz inequality, since $F\in L^4(\mu), \chi_G\in L^2(\mu)$ and~$w$ is uniformly bounded, it follows that
\begin{align*}
            \E\bkt*{\abs*{\Cterm}^2}
            &\leq 2\int_{0}^T \E\bkt*{F(X_t)^2\chi_G(X_t)} \, dt + 2\int_0^T \E\bkt*{\w\paren*{\frac{t}{T}}^{2} F(X_0)^2 \chi_G(X_t)} \, dt \\
            &\leq 2T\norm{F}_{L^4(\mu)}^2\norm{\chi_G} + 2T\norm{w}_{C^0}^2 \norm{F}_{L^4(\mu)}^2\norm{\chi_G}.
    \end{align*}
    \end{itemize}
Thus, we conclude that $\E[|\remAT|^2] = \bigO(T)$ as desired.
        Let us now consider $\remBT$.
        We have
\begin{itemize}[leftmargin=*]
    \item $\Dterm$: By Cauchy--Schwarz and the fact that $w'$ is uniformly bounded and $F,G\in L^4(\mu)$, it holds that
\begin{align*}
            \E\bkt*{\abs*{\Dterm}^2}
            &= \E\left[G(X_T)^2\left(\int_0^T w'\paren*{\frac{T-s}{T}}F(X_s) \, ds\right)^2\right] \\
            &\leq \E\left[G(X_T)^4\right]^\frac{1}{2}\E\bkt*{\abs*{\int_0^T w'\paren*{\frac{T-s}{T}}F(X_s) \, ds}^4}^\frac{1}{2} \\
            &\leq T^2\norm{w'}^2_{C^0}\norm{G}_{L^4(\mu)}^2\norm{F}_{L^4(\mu)}^2.
        \end{align*}
\item $\Eterm$: By the same reasoning we used to write \eqref{eq:It_ito_rewrite},
        we apply It\^o's formula to~$\w'((t-s)/T)F(X_s)$ to rewrite~$\Eterm$ as
\begin{equation}
    \begin{aligned}
        \Eterm &= \int_0^T G(X_t)\left(\w'\paren*{\frac{t}{T}}F(X_0) - \w'(0)F(X_t)\right) dt \\
        &\qquad - \frac{1}{T}\int_0^T G(X_t)\paren*{\int_0^t \w''\paren*{\frac{t-s}{T}}F(X_s) \, ds} \, dt \\
        &\qquad + \int_0^T G(X_t)\left(\int_0^t \w'\paren*{\frac{t-s}{T}}\nabla F(X_s)^\t \sigma(X_s) \, dW_s\right) dt.
        \label{eq:Eterm_eq1}
    \end{aligned}
    \end{equation}
The expectation of the square of the first two terms in \eqref{eq:Eterm_eq1} are easily shown to be $\bigO(T^2)$ by a Cauchy--Schwarz inequality, since $\w',\w''$ are uniformly bounded and~$F,G\in L^4(\mu)$:
\begin{align*}
        &\E\left[\abs*{\int_0^T G(X_t)\paren*{\w'\paren*{\frac{t}{T}}F(X_0) - \w'(0)F(X_t)} \, dt}^2\right] \\
        &\qquad \leq 2T\norm{w'}_{C^0}^2\paren*{\int_0^T \E\bkt*{G(X_t)^2F(X_0)^2} \, dt + \int_0^T \E\bkt*{G(X_t)^2F(X_t)^2} \, dt} \\
&\qquad \leq 4T^2\norm{w'}_{C^0}^2 \norm{G}_{L^4(\mu)}^2\norm{F}_{L^4(\mu)}^2,
    \end{align*}
and
\begin{align*}
        &\frac{1}{T^2}\E\left[\abs*{\int_0^T G(X_t)\paren*{\int_0^t \w''\paren*{\frac{t-s}{T}}F(X_s) \, ds} dt}^2\right] \\
        &\qquad \leq \frac{1}{T}\int_0^T \E\bkt*{G(X_t)^4}^\frac{1}{2}\E\bkt*{\paren*{\int_0^t\w''\paren*{\frac{t-s}{T}}F(X_s) \, ds}^4}^\frac{1}{2} \, dt \\
        &\qquad \leq \frac{\norm{w''}_{C^0}^2\norm{F}_{L^4(\mu)}^2}{T}\int_0^T t^2 \E\left[G(X_t)^4\right]^\frac{1}{2} dt
        = \frac{T^2\norm{w''}_{C^0}^2\norm{F}_{L^4(\mu)}^2\norm{G}_{L^4(\mu)}^2}{3}.
    \end{align*}
For the third term, since $\chi_F\in L^4(\mu)$, applying Cauchy--Schwarz then BDG gives
\begin{align*}
        &\E\left[\left\lvert\int_0^T G(X_t)\paren*{\int_0^t \w'\paren*{\frac{t-s}{T}}\nabla F(X_s)^\t \sigma(X_s) \, dW_s} \, dt\right\rvert^2\right] \\
        &\qquad \leq T\int_0^T \E\bkt*{G(X_t)^4}^\frac{1}{2}\E\bkt*{\abs*{\int_0^t \w'\paren*{\frac{t-s}{T}}\nabla F(X_s)^\t \sigma(X_s) \, dW_s}^4}^\frac{1}{2} dt \\
        &\qquad \leq T\sqrt{C_4}\int_0^T \E\bkt*{G(X_t)^4}^\frac{1}{2}\E\bkt*{\abs*{\int_0^t \w'\paren*{\frac{t-s}{T}}^2\chi_F(X_s) \, ds}^2}^\frac{1}{2} dt \\
        &\qquad \leq T\sqrt{C_4}\norm{w'}_{C^0}^2\norm{\chi_F} \int_0^T t\E\bkt*{G(X_t)^4}^\frac{1}{2} dt = \frac{T^3\sqrt{C_4}}{2}\norm{w'}_{C^0}^2\norm{\chi_F}\norm{G}_{L^4(\mu)}^2,
    \end{align*}
so it holds that $\E[|\Eterm|^2] = \bigO(T^3)$.

    \item $\Fterm$: By It\^o isometry, a Cauchy--Schwarz inequality, the fact that $\w'$ is uniformly bounded, a H\"older inequality and $F \in L^4(\mu), \chi_G\in L^2(\mu)$, it follows that
\begin{align*}
        \E\bkt*{\abs*{\Fterm}^2} &= \E\bkt*{\int_0^T \paren*{\int_0^t \w'\paren*{\frac{t-s}{T}} F(X_s) \, ds}^2 \chi_G(X_t) \, dt} \\
&\qquad \leq \norm{\chi_G} \int_0^T \E\bkt*{\abs*{\int_0^t \w'\paren*{\frac{t-s}{T}} F(X_s) \, ds}^4}^\frac{1}{2} dt \\
        &\qquad \leq \norm{\chi_G}\norm{w'}_{C^0}^2 \int_0^T \E\bkt*{ \left(\int_0^t \abs*{F(X_s)} \, ds\right)^4}^\frac{1}{2} dt \\
        &\qquad \leq \norm{\chi_G}\norm{w'}_{C^0}^2 \int_0^T \E\bkt*{t^3\int_0^t |F(X_s)|^4 \, ds}^\frac{1}{2} dt \\
        &\qquad \leq \norm{\chi_G}\norm{w'}_{C^0}^2\norm{F}_{L^4(\mu)}^2 \int_0^T t^2 \, dt
        = \frac{T^3}{3}\norm{\chi_G}\norm{w'}_{C^0}^2\norm{F}_{L^4(\mu)}^2.
    \end{align*}
\end{itemize}
Thus, we deduce that $\E[|\remBT|^2]/T^4$ tends to 0 as $T\to\infty$ (which allows to conclude that the right-hand side of \eqref{eq:var_sqr_split} converges to 0),
and in particular that it is uniformly bounded in $T$, which gives \eqref{eq:HE_var_prop_statement_unif}.
This concludes the proof.
\end{proof}

\begin{remark}[Generalization]
It is possible to weaken the regularity conditions on the weight~$w$ in order to extend the results of \cref{prop:var_standard_HE} to a more general class of weight functions, in particular requiring only that $w$ be continuous at 0. Technical details are made precise in~\cref{appendix:generalizations_HE_var}.
\end{remark}

\section{Control variates for fluctuation formulas}
\label{sec:cv_methodology}
When used without modification,
the estimators~\eqref{eq:GK_estimator} and~\eqref{eq:HE_gen_estimator} presented from~\cref{sec:standard_formulas} can have a large variance.
In this section,
we construct and discuss some improved estimators based on control variates to calculate~$\rho$ in~\eqref{eq:tc_ip}:
\begin{equation}
    \rho = \ip{f,-\L^{-1}g} = \int_0^{+\infty} \E\bigl[f(X_0) g(X_t)\bigr] \, dt.
    \label{eq:poisson_rho_cv}
\end{equation}
In particular,
we consider three distinct ways of calculating~\eqref{eq:poisson_rho_cv} via a control variate approach.
These are all based on replacing,
in the estimators given in~\cref{sec:standard_formulas},
the function~$g$ or~$f$ or both by other ``smaller'' functions involving approximate solutions to Poisson equations.
By a slight abuse of terminology,
we call \emph{control variates} the functions used in place of $g$ or $f$ in the improved estimators.
The three approaches studied in this section are summarized below:
\begin{itemize}[leftmargin=*]
    \item
        \textbf{Forward control variate}:
        based on replacing by a control variate the function~$g$,
        which appears with a time dependence in the Green--Kubo formula~\eqref{eq:poisson_rho_cv}
        and in the related estimators~\eqref{eq:GK_estimator} and~\eqref{eq:HE_gen_estimator}.
        This approach is discussed in \cref{subsec:t_dep_cv}.
    \item
        \textbf{Adjoint control variate}:
        based on replacing by a control variate the function~$f$,
        which acts only in the initial condition in the Green--Kubo formula~\eqref{eq:poisson_rho_cv}
        and in the related estimators~\eqref{eq:GK_estimator} and~\eqref{eq:HE_gen_estimator}.
        This approach is discussed in \cref{subsec:zero_cost_cv}.
    \item
        \textbf{Combined control variates}:
        based on replacing both $f$ and $g$ by control variates, discussed in \cref{subsec:both_cv}.
\end{itemize}
Throughout this section,
we let $\F,G$ denote the solutions in~$L^2_0(\mu)$ to
\begin{subequations}
\begin{align}
    -\L^*\F = f, \label{eq:poisson_varphi_f} \\
    -\L G = g,
    \label{eq:poisson_varphi_g}
\end{align}
\end{subequations}
where $\L^*$ denotes the $L^2(\mu)$-adjoint of $\L$: for any test functions~$\varphi, \phi \in C^\infty_\mathrm{c}$,
\begin{equation}
    \notag
    \int_\mathcal{X} (\L\varphi)\phi \, d\mu = \int_\mathcal{X} \varphi(\L^*\phi) \, d\mu.
\end{equation}
Since $(\e^{t\L})^*$ = $\e^{t\L^*} \in \mathcal{B}(L^2_0(\mu))$,
decay estimates on the semigroup $\e^{t\L}$ such as \cref{as:semigroup_decay} immediately extend to its adjoint.
Thus, the well-posedness conditions discussed in \cref{cor:well_posedness_poisson} directly apply to~\eqref{eq:poisson_varphi_f}.
We also denote by $\psi_f^*$ and $\psi_g$ approximations of the solutions to~\eqref{eq:poisson_varphi_f} and~\eqref{eq:poisson_varphi_g}, respectively. These functions are assumed to belong to $L^2_0(\mu)$. Other integrability and regularity conditions are needed to make the variance of various estimators precise; see \crefrange{corollary:gk_time}{corollary:he_combined}.
The concrete construction of these approximate solutions will be discussed in the next section.
For notational convenience we also introduce
\begin{equation}
    \notag
    \zeta_w = \int_{0}^{1} (1-v)\w(v)^2 \, dv,
\end{equation}
for the factor associated with a weight function~$w$ that appears on the right-hand of the asymptotic variance formula~\eqref{eq:HE_var_prop_statement_asym}.
The three control variates studied in this section are summarized in~\cref{table:summary_cv}.

\begin{table}[htb!]
\footnotesize
\centering
\caption{Summary of the various control variates.
In all the formulas,
the functions $\psi_f^*,\psi_g$ denote respectively approximations of the solutions $F^*,G$ to the Poisson equations~$- \mathcal L^* F^* = f$ and $-\mathcal L G = g$.
The terms highlighted in dark green are the so-called ``static'' terms which can be evaluated via standard Monte Carlo methods,
while the terms highlighted in magenta are the only ones containing the unknown functions $F^*, G$.
These are all of the form $\ip{h, \mathcal L^{-1} \ell}$,
for appropriate functions $h, \ell$,
and so they can be approximated using the estimators of~\cref{sec:standard_formulas}.
}
\label{table:summary_cv}
\begin{tabular}{cc}
\toprule
\phantom{$\Big($}
    \textbf{Control variate} & \textbf{Formula for} $\rho = \ip{f,- \mathcal L^{-1} g}$
    \\ \midrule
    \phantom{$\Big($}
        {\bf Forward}
         &
         \(
         \displaystyle
         \rho = \textcolor{darkgreen}{\ip{f,\psi_g}} + \textcolor{magenta}{\ip{f, G - \psi_g}}
         \)
         \\
        Estimators~\eqref{eq:GK_cv_t_estimator} or~\eqref{eq:HE_cv_t_estimator} &
        Variance GK (\cref{corollary:gk_time}) $\propto \norm{f}^2 \mathcal E(G - \psi_g) \textcolor{red}{T}$ \\
         (\cref{subsec:t_dep_cv})
        & Variance HE (\cref{corollary:he_time}) $\propto \mathcal E(f) \mathcal E(G - \psi_g) $
         \\ \midrule
         \phantom{$\Big($}
             {\bf Adjoint}
         &
         \(
         \displaystyle
         \rho = \textcolor{darkgreen}{\ip{\psi_f^*, g}} + \textcolor{magenta}{\ip{f + \mathcal L^* \psi_f^*, G}}
         \)
         \\
        Estimators~\eqref{eq:GK_cv_0_estimator} or~\eqref{eq:HE_cv_0_estimator} &
        Variance GK (\cref{corollary:gk_zero}) $\propto \norm{f + \mathcal L^* \psi_f^*}^2 \mathcal E(G) \textcolor{red}{T}$ \\
        (\cref{subsec:zero_cost_cv})
        & Variance HE (\cref{corollary:he_zero}) $\propto \mathcal E(F^* - \psi_f^*) \mathcal E(G) $
         \\ \midrule
         \phantom{$\Big($}
        {\bf Combined}  &
        \(
        \displaystyle
        \rho = \textcolor{darkgreen}{\ip{\psi_f^*, g} + \ip{f, \psi_g} + \ip{\psi_f^*, \L\psi_g}} + \textcolor{magenta}{\ip{f + \mathcal L^* \psi_f^*, G - \psi_g}}
        \)
    \\
    Estimators~\eqref{eq:GK_cv_both_estimator} or~\eqref{eq:HE_cv_both_estimator} &
    Variance GK (\cref{corollary:gk_combined}) $\propto \norm{f + \mathcal L^* \psi_f^*}^2 \mathcal E(G - \psi_g) \textcolor{red}{T}$ \\
    (\cref{subsec:both_cv})
    & Variance HE (\cref{corollary:he_combined}) $\propto \mathcal E(F^* -\psi_f^*) \mathcal E(G - \psi_g) $
    \\ \bottomrule
\end{tabular}
\end{table}

\subsection{Forward control variate}
\label{subsec:t_dep_cv}
A simple calculation shows that
\begin{equation}
    \rho = \ip{f,-\L^{-1}g} = \ip{f,\psi_g} + \ip{f, G - \psi_g}.
    \label{eq:res_poisson_split}
\end{equation}
The first term on the right-hand side of \eqref{eq:res_poisson_split},
henceforth called the \emph{static} term,
corresponds to an approximation of~$\rho$ obtained from the approximate solution~$\psi_g$ to the Poisson equation,
and can be computed by resorting to standard Monte Carlo algorithms sampling $\mu$.
The second term is a correction term, which can be approximated using the estimators presented in~\cref{sec:standard_formulas}.
We next discuss estimators for this term and state some technical results which make precise the associated variances.

\paragraph{Static part}
In low dimensions,
the average $\ip{f,\psi_g}$ can be estimated via deterministic methods or classical i.i.d.\ Monte Carlo sampling.
When this is not possible, a Markov chain Monte Carlo (MCMC) approach can be employed:
\begin{equation}
    \notag
    \staticEst(f,\psi_g) = \frac{1}{TK}\sum_{k=1}^K \int_0^T f(X_t^k) \psi_g(X_t^k) \, dt.
\end{equation}
In any case, the approximation of $\ip{f,\psi_g}$ is much simpler than expressions involving correlations, and various variance reduction methods can be used; see \cite[Section 3.4]{lelievre2016}.

\paragraph{Green--Kubo} One possible estimator for the correction term in \eqref{eq:res_poisson_split} is given by the usual Green--Kubo estimator \eqref{eq:GK_estimator},
which gives:
\begin{equation}
    \GKcorr{g}{} = \frac{1}{K}\sum_{k=1}^K\int_0^T f(X^k_0) \Bigl(g(X^k_t) + \L\psi_g(X_t^k)\Bigr) \, dt.
    \label{eq:GK_cv_t_estimator}
\end{equation}
We next state a result on the variance of the improved estimator \eqref{eq:GK_cv_t_estimator}, a straightforward result directly obtained from \cref{prop:variance_standard_GK} by replacing $g$ with $g + \L\psi_g$.

\begin{corollary}
    [Variance of forward improved GK estimator]
    \label{corollary:gk_time}
Assume that $f,g$ satisfy the conditions given in \cref{prop:variance_standard_GK}, and that $\psi_g\in L^4(\mu)$ with $\L\psi_g\in L^2_0(\mu)$ and $\chi_{\psi_g}\in L^2(\mu)$. Then,
\begin{equation*}
        \lim_{T\to\infty} T^{-1}\Var\paren*{\GKcorr{g}{}} = \frac{2}{K} \norm{f}^2 \ip[\Big]{g + \L\psi_g, -\L^{-1}(g + \L\psi_g)}.
    \label{eq:gk_cor_statement_1}
\end{equation*}
\end{corollary}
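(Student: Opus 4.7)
The plan is to recognize that the improved estimator $\GKcorr{g}{}$ is formally identical to the standard Green--Kubo estimator $\GKest$ of \cref{subsec:standard_GK}, but with the function~$g$ replaced by $\tilde g := g + \L\psi_g$. Indeed, with this substitution,
\[
    \GKcorr{g}{} = \frac{1}{K}\sum_{k=1}^K\int_0^T f(X^k_0)\, \tilde g(X^k_t) \, dt,
\]
so the statement will follow immediately from \cref{prop:variance_standard_GK} applied to the pair $(f, \tilde g)$, once the hypotheses of that proposition are checked for $\tilde g$.

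The verification of the hypotheses will proceed in three short steps. First, $\tilde g \in L^2_0(\mu)$ because $g \in L^2_0(\mu)$ by assumption and $\L\psi_g \in L^2_0(\mu)$ by hypothesis. Second, the corresponding Poisson solution is, by linearity of $-\L^{-1}$ on $L^2_0(\mu)$, given by $\tilde G := -\L^{-1}\tilde g = G - \Pi\psi_g$, and hence belongs to $L^4(\mu)$ because $G \in L^4(\mu)$ (from the hypotheses of \cref{prop:variance_standard_GK} inherited here) and $\psi_g \in L^4(\mu)$ by assumption. Third, one needs $\chi_{\tilde G} \in L^2(\mu)$. Using bilinearity of the carr\'e du champ~\eqref{eq:carre_du_champ},
\[
    \chi_{\tilde G} = 2\Gamma(G - \Pi\psi_g, G - \Pi\psi_g) = \chi_G + \chi_{\psi_g} - 4\Gamma(G, \psi_g),
\]
where we used $\Gamma(\Pi\psi_g, \cdot) = \Gamma(\psi_g, \cdot)$ since $\Gamma$ only depends on gradients. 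The Cauchy--Schwarz inequality for $\Gamma$ gives $|\Gamma(G,\psi_g)|^2 \leq \Gamma(G,G)\Gamma(\psi_g,\psi_g) = \tfrac{1}{4}\chi_G \chi_{\psi_g}$, whence
\[
    \norm{\Gamma(G,\psi_g)}^2 \leq \tfrac{1}{4}\E_\mu[\chi_G \chi_{\psi_g}] \leq \tfrac{1}{4}\norm{\chi_G}\norm{\chi_{\psi_g}} < +\infty
\]
by Cauchy--Schwarz in $L^2(\mu)$ and the assumptions $\chi_G, \chi_{\psi_g} \in L^2(\mu)$. Thus $\chi_{\tilde G} \in L^2(\mu)$.

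Applying \cref{prop:variance_standard_GK} to the pair $(f, \tilde g)$ then yields
\[
    \lim_{T\to\infty} T^{-1}\Var\paren*{\GKcorr{g}{}} = \frac{2}{K}\norm{f}^2 \ip[\Big]{\tilde g, -\L^{-1}\tilde g} = \frac{2}{K}\norm{f}^2 \ip[\Big]{g + \L\psi_g, -\L^{-1}(g + \L\psi_g)},
\]
as claimed. There is no real obstacle here: the entire content of the corollary is that the variance formula of \cref{prop:variance_standard_GK} is stable under the substitution $g \leadsto g + \L\psi_g$, and the only mildly nontrivial point is the bookkeeping needed to transfer the integrability assumption on $\chi_{\tilde G}$, handled above via Cauchy--Schwarz for the carr\'e du champ.
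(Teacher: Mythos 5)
Your proof is correct and follows exactly the route the paper indicates: the corollary is stated without a written proof and is presented as a direct consequence of Proposition~2.4 under the substitution $g \leadsto g + \mathcal L\psi_g$, which is precisely your argument. The bookkeeping you supply — identifying $-\mathcal L^{-1}(g+\mathcal L\psi_g) = G - \Pi\psi_g$ by uniqueness from Corollary~2.2, and controlling $\chi_{G-\Pi\psi_g}$ via bilinearity and the pointwise Cauchy--Schwarz inequality for the carr\'e du champ — is a sound and complete verification of the hypotheses that the paper leaves implicit.
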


Recall from the discussion after \cref{prop:variance_standard_GK}
that bounds on the resolvent are typically of order $\lambda^{-1}$ where $\lambda$ is the smallest nonzero eigenvalue of the operator $\L$. Thus, the $g$ term in the expression for the asymptotic variance is of order $\lambda^{-1}\norm{g + \L\psi_g}^2$. \cref{corollary:gk_time} thus suggests that the improved estimator is numerically useful when $\norm{g+\L\psi_g} \ll \norm{g}$, namely when the residual associated with the approximate solution of the Poisson equation is small enough.

\paragraph{Half-Einstein}
Another possible estimator for the correction term in \eqref{eq:res_poisson_split} is the half-Einstein estimator \eqref{eq:HE_gen_estimator}:
\begin{equation}
    \HEcorr{g}{} = \frac{1}{TK}\sum_{k=1}^K\int_0^T\int_0^t w\paren*{\frac{t-s}{T}}
        f(X^k_s) \Bigl(g(X^k_t) + \L\psi_g(X_t^k)\Bigr) \, ds \, dt.
    \label{eq:HE_cv_t_estimator}
\end{equation}
The variance of \eqref{eq:HE_cv_t_estimator} is made precise in the following corollary of \cref{prop:var_standard_HE}.

\begin{corollary}
    [Variance of forward improved HE estimator]
    \label{corollary:he_time}
    Assume that $f,g$ satisfy the conditions given in \cref{prop:var_standard_HE}, and that $\psi_g\in L^4(\mu)$ with $\L\psi_g\in L^4(\mu)$ and $\chi_{\psi_g}\in L^4(\mu)$. Then, \begin{equation*}
        \lim_{T\to+\infty} \Var\paren*{\HEcorr{g}{}} = \frac{4\zeta_w}{K}\ip{f, -\L^{-1}f}\ip{g + \L\psi_g, -\L^{-1}(g + \L\psi_g)}.
    \end{equation*}
\end{corollary}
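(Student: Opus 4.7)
The plan is to observe that the estimator $\HEcorr{g}{}$ coincides exactly with the standard half-Einstein estimator~\eqref{eq:HE_gen_estimator} applied to the pair $(f, \tilde g)$, where $\tilde g := g + \L\psi_g$. The entire result then follows from \cref{prop:var_standard_HE} applied to this modified pair, provided one checks that the hypotheses of that proposition are satisfied with $g$ replaced by $\tilde g$. The final identification uses $-\L^{-1}\tilde g = G - \psi_g$.

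First I would verify $\tilde g \in L^4_0(\mu)$: each summand lies in $L^4(\mu)$ by the hypothesis on $\psi_g$ and that on $g$ (inherited from \cref{prop:var_standard_HE}), and both have zero mean, since $\int_\X \L\psi_g \, d\mu = 0$ by invariance of $\mu$ (as $\psi_g \in L^2_0(\mu)$ and $\L\psi_g \in L^2(\mu)$). Next, the solution of $-\L H = \tilde g$ is $H = G - \psi_g$, which belongs to $L^4(\mu)$ by the hypothesis on $G$ and on $\psi_g$.

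The only slightly nontrivial point is checking that $\chi_H \in L^4(\mu)$. By bilinearity of the carr\'e du champ $\Gamma$ defined in~\eqref{eq:carre_du_champ},
\[
\chi_H = \chi_{G - \psi_g} = \chi_G - 4\,\Gamma(G, \psi_g) + \chi_{\psi_g}.
\]
The outer two terms lie in $L^4(\mu)$ by hypothesis. For the cross term, the pointwise Cauchy--Schwarz inequality for $\Gamma$ gives $|\Gamma(G, \psi_g)| \leq \sqrt{\Gamma(G,G)\,\Gamma(\psi_g,\psi_g)} = \tfrac12 \sqrt{\chi_G \chi_{\psi_g}}$, and then $\norm{\Gamma(G,\psi_g)}_{L^4(\mu)}^4 \leq \tfrac{1}{16}\,\E[\chi_G^2 \chi_{\psi_g}^2] \leq \tfrac{1}{16}\,\norm{\chi_G}_{L^4(\mu)}^2 \norm{\chi_{\psi_g}}_{L^4(\mu)}^2$ by Cauchy--Schwarz on $L^2(\mu)$, which is finite.

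With all hypotheses of \cref{prop:var_standard_HE} verified for the pair $(f, \tilde g)$, direct application yields
\[
\lim_{T \to +\infty} \Var\paren*{\HEcorr{g}{}} = \frac{4\zeta_w}{K}\ip{f,-\L^{-1}f}\ip{\tilde g, -\L^{-1}\tilde g},
\]
and substituting back $\tilde g = g + \L\psi_g$ gives the claimed formula. The main (mild) obstacle is the $L^4$ integrability of the cross term $\Gamma(G,\psi_g)$; the rest is purely bookkeeping of the integrability assumptions through the substitution.
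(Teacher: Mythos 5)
Your proof is correct and takes the same approach the paper intends: it is a direct application of \cref{prop:var_standard_HE} with $g$ replaced by $\tilde g = g + \L\psi_g$, noting that $-\L^{-1}\tilde g = G - \psi_g$. The paper treats this as an immediate corollary without supplying details; your check of the $L^4$ bound on the cross term $\Gamma(G,\psi_g)$ via the pointwise Cauchy--Schwarz inequality for the carr\'e du champ is a welcome explicit verification of the one hypothesis that does not transfer trivially.
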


Computing either \eqref{eq:GK_cv_t_estimator} or~\eqref{eq:HE_cv_t_estimator} requires evaluating the control variate $g + \L\psi_g$ at every step of the Monte Carlo simulation, which may be an expensive computation.
In the next section,
we present a control variate approach with a lower computational cost,
as it does not require evaluation at all steps.

\subsection{Adjoint control variate}
\label{subsec:zero_cost_cv}
It is possible to devise a control variate which need not be evaluated at all steps of the Monte Carlo run.
In particular, we construct a \emph{near-zero-cost} control variate (i.e., zero-cost aside from the computation of the approximate solution to the Poisson equation), as it is only evaluated once at initial time.
To this end, we first write \eqref{eq:poisson_rho_cv} in terms of the adjoint:
\begin{align}
    \notag
    \rho &= \ip{(-\L^{-1})^*f, g} \\
    \notag
    &= \ip{\psi_f^*, g} + \ip{\F - \psi_f^*, g} \\
    \label{eq:zero_cost}
    &= \ip{\psi_f^*, g} + \ip{f + \mathcal L^* \psi_f^*, - \mathcal L^{-1} g}.
\end{align}
As in the previous section,
the first term on the right-hand side is a static term that can be estimated with standard trajectory averages as
\begin{equation}
    \staticEst(g,\psi_f^*) = \frac{1}{TK}\sum_{k=1}^K \int_0^T g(X_t^k) \psi_f^*(X_t^k) \, dt,
    \label{eq:static_g_psif}
\end{equation}
while the second term can be approximated using the estimators presented in~\cref{sec:standard_formulas}, as made precise below.

\paragraph{Green--Kubo} The second term in~\eqref{eq:zero_cost} assumes the following estimator
\begin{equation}
    \GKcorr{f}{} = \frac{1}{K}\sum_{k=1}^K\int_0^T \paren*{f(X^k_0) + \L^*\psi_f^*(X_0^k)}g(X^k_t) \, dt.
    \label{eq:GK_cv_0_estimator}
\end{equation}
Note that this estimator requires the computation of the control variate $f + \L^*\psi_f^*$ only at the initial time,
eliminating the need to evaluate the control variate at every step of the Monte Carlo simulation.

\begin{corollary}
    [Variance of adjoint improved GK estimator]
    \label{corollary:gk_zero}
    Assume that $f,g$ satisfy the conditions given in \cref{prop:variance_standard_GK}, and that $\L^*\psi_f^*\in L^4(\mu)$. Then,
    \begin{equation}
        \notag
        \lim_{T\to+\infty} T^{-1}\Var\paren*{\GKcorr{f}{}} = \frac{2}{K} \norm{f + \L^*\psi_f^*}^2 \ip[\big]{g,-\L^{-1}g}.
    \end{equation}
\end{corollary}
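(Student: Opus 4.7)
The plan is to observe that the estimator $\GKcorr{f}{}$ in \eqref{eq:GK_cv_0_estimator} has exactly the structure of the standard Green--Kubo estimator $\GKest$ from \eqref{eq:GK_estimator}, but with the function $f$ replaced by the modified function $\widetilde{f} := f + \L^*\psi_f^*$. Accordingly, I would simply reduce the statement to a direct application of \cref{prop:variance_standard_GK} with the pair $(\widetilde f, g)$ in place of $(f, g)$, and verify that the hypotheses of that proposition are inherited.

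The main bookkeeping step is to check that $\widetilde f \in L^4_0(\mu)$. Integrability in $L^4(\mu)$ is immediate: $f \in L^4_0(\mu)$ by the inherited assumptions of \cref{prop:variance_standard_GK}, and $\L^* \psi_f^* \in L^4(\mu)$ by hypothesis, so the triangle inequality yields $\widetilde f \in L^4(\mu)$. For the zero-mean condition, since $\mu$ is invariant for the dynamics \eqref{eq:generic_SDE}, the constant function $1$ lies in the kernel of $\L$; by the adjoint identity,
\begin{equation*}
\int_\mathcal{X} \L^* \psi_f^* \, d\mu = \int_\mathcal{X} \psi_f^* \cdot \L 1 \, d\mu = 0,
\end{equation*}
so $\widetilde f$ has zero mean. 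The other hypotheses of \cref{prop:variance_standard_GK} concern only $g$ and $G = -\L^{-1} g$, and are unchanged.

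Applying \cref{prop:variance_standard_GK} to $(\widetilde f, g)$ therefore yields
\begin{equation*}
\lim_{T\to+\infty} T^{-1} \Var\paren*{\GKcorr{f}{}}
= \frac{2}{K} \norm{\widetilde f}^2 \ip{g, -\L^{-1} g}
= \frac{2}{K} \norm{f + \L^* \psi_f^*}^2 \ip{g, -\L^{-1} g},
\end{equation*}
which is the claimed result. I do not anticipate any genuine obstacle: the only nontrivial point is confirming that $\L^* \psi_f^*$ is mean-zero, which is a one-line consequence of invariance of $\mu$.
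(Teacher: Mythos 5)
Your proposal is correct and matches the paper's approach exactly: the paper presents this corollary as a direct application of \cref{prop:variance_standard_GK} with $f$ replaced by $f + \L^*\psi_f^*$, and the paper makes the same remark you do, that $\int_\X \L^*\psi_f^* \, d\mu = \int_\X \psi_f^* (\L\ind) \, d\mu = 0$, so the $L^4_0(\mu)$ condition reduces to $\L^*\psi_f^* \in L^4(\mu)$.
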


Note that $\L^*\psi_f^*$ has average 0 with respect to $\mu$:
\begin{equation}
    \notag
    \int_\X \L^*\psi^*_f \, d\mu = \int_\X \psi^*_f (\L\ind) \, d\mu = 0,
\end{equation}
so that the condition $\L^*\psi_f^* \in L^4_0(\mu)$ from \cref{prop:variance_standard_GK} reduces to $\L^*\psi_f^* \in L^4(\mu)$.

\paragraph{Half-Einstein} The second term in \eqref{eq:zero_cost} assumes the following estimator
\begin{equation}
    \HEcorr{f}{} = \frac{1}{TK}\sum_{k=1}^K\int_0^T\int_0^t w\left( \frac{t-s}{T} \right)
    \Bigl(f\bigl(X^k_s\bigr) + \L^*\psi_f^*\bigl(X_s^k\bigr)\Bigr) g\bigl(X^k_t\bigr) \, ds \, dt.
    \label{eq:HE_cv_0_estimator}
\end{equation}

\begin{corollary}
    [Variance of adjoint improved HE estimator]
    \label{corollary:he_zero}
    Assume that $f,g$ satisfy the conditions given in \cref{prop:var_standard_HE}, and that %$(\L^{-1})^*f \in L^4(\mu)$, $\psi_f^*\in L^4(\mu)$, $\L^*\psi_f^*\in L^4(\mu)$ and $\chi_{\psi_f^*}\in L^4(\mu)$. Then,
$\L^*\psi_f^*\in L^4(\mu)$, $\L^{-1}\L^*\psi_f^*\in L^4(\mu)$ and $\chi_{\L^{-1}\L^*\psi_f^*}\in L^4(\mu)$. Then,
\begin{equation*}
        \lim_{T\to+\infty} \Var\paren*{\HEcorr{f}{}} = \frac{4\zeta_w}{K}
        \ip[\Big]{f + \L^*\psi_f^*, -\L^{-1}(f + \L^*\psi_f^*)}\ip[\Big]{g, -\L^{-1}g}.
    \end{equation*}
\end{corollary}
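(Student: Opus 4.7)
The plan is to reduce the claim to \cref{prop:var_standard_HE} by observing that $\HEcorr{f}{}$ has exactly the same structural form as the standard half-Einstein estimator~\eqref{eq:HE_gen_estimator}, with $f$ replaced by $\widetilde f := f + \L^*\psi_f^*$ and $g$ left unchanged. The strategy is therefore to apply \cref{prop:var_standard_HE} to the pair $(\widetilde f, g)$: once the hypotheses are verified, the asymptotic variance formula~\eqref{eq:HE_var_prop_statement_asym} yields
\[
    \lim_{T \to +\infty} \Var\paren*{\HEcorr{f}{}}
    = \frac{4\zeta_w}{K} \ip[\Big]{\widetilde f, -\L^{-1}\widetilde f}\ip[\Big]{g, -\L^{-1}g},
\]
which is precisely the claimed identity after substituting back the definition of $\widetilde f$.

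The remaining work is to verify that $\widetilde f$ satisfies the integrability conditions that \cref{prop:var_standard_HE} imposes on its first argument. For the mean-zero property, I would combine $f \in L^2_0(\mu)$ with the identity $\int_\X \L^*\psi_f^* \, d\mu = 0$ recorded right after \cref{corollary:gk_zero}. For $\widetilde f \in L^4(\mu)$, the triangle inequality together with the assumption $\L^*\psi_f^* \in L^4(\mu)$ and the inherited hypothesis $f \in L^4(\mu)$ suffices. For $\L^{-1}\widetilde f \in L^4(\mu)$, I would appeal to linearity to write $\L^{-1}\widetilde f = \L^{-1}f + \L^{-1}\L^*\psi_f^*$ and invoke the corresponding $L^4$ assumptions on each summand (the first inherited from \cref{prop:var_standard_HE}, the second stated explicitly in the corollary).

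The slightly more delicate point, which I would flag as the main obstacle, concerns the carr\'e du champ condition $\chi_{\widetilde F} \in L^4(\mu)$ with $\widetilde F := -\L^{-1}\widetilde f = F - \L^{-1}\L^*\psi_f^*$. Since $\Gamma$ is a symmetric positive semi-definite bilinear form, expanding $\Gamma(\widetilde F, \widetilde F)$ and absorbing the cross term via a Cauchy--Schwarz inequality yields a pointwise bound of the form
\[
    \chi_{\widetilde F} \leq 2\chi_F + 2\chi_{\L^{-1}\L^*\psi_f^*},
\]
from which $\chi_{\widetilde F}\in L^4(\mu)$ follows immediately from the inherited condition $\chi_F\in L^4(\mu)$ and the stated hypothesis $\chi_{\L^{-1}\L^*\psi_f^*}\in L^4(\mu)$. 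At that point all hypotheses of \cref{prop:var_standard_HE} hold for the pair $(\widetilde f, g)$, and the asymptotic variance formula above drops out, concluding the proof.
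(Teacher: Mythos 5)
Your proof is correct and follows exactly the route the paper intends for this corollary, which is stated without proof as a direct consequence of \cref{prop:var_standard_HE} applied with $f$ replaced by $f + \L^*\psi_f^*$. Your verification of the hypotheses---mean-zero via $\int_\X \L^*\psi_f^*\,d\mu = 0$, the $L^4$ bounds by linearity, and the pointwise estimate $\chi_{\widetilde F}\leq 2\chi_F + 2\chi_{\L^{-1}\L^*\psi_f^*}$ via Cauchy--Schwarz for the carr\'e du champ quadratic form---supplies precisely the bookkeeping the paper leaves implicit.
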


\subsection{Combining both control variates}
\label{subsec:both_cv}
By obtaining approximate solutions to both Poisson equations, we can apply both control variates simultaneously.
By writing $-\L^{-1}g = \psi_g + (G - \psi_g)$ in \eqref{eq:zero_cost},
we obtain
\begin{align}
    \notag
\rho
    &= \ip{\psi_f^*, g} + \ip{f + \mathcal L^* \psi_f^*, G} \\
    \notag
    &= \ip{\psi_f^*, g} + \ip{f + \mathcal L^* \psi_f^*, \psi_g} + \ip{f + \mathcal L^* \psi_f^*, G - \psi_g} \\
    &= \ip{\psi_f^*, g} + \ip{f, \psi_g} + \ip{\psi_f^*, \L\psi_g} + \ip{f + \mathcal L^* \psi_f^*, G - \psi_g}.
    \notag
\end{align}
The first three terms can be estimated directly using trajectory averages,
while the last term can be approximated using the estimators in~\cref{sec:standard_formulas}.

The GK improved estimator reads in this context
\begin{equation}
\begin{aligned}
    \GKcorr{f}{g} =
    \frac{1}{K}\sum_{k=1}^K\int_0^T \Bigl(f(X^k_0) + \L^*\psi_f^*(X_0^k)\Bigr)\Bigl(g(X^k_t) + \L\psi_g(X_t^k)\Bigr) \, dt.
    \label{eq:GK_cv_both_estimator}
\end{aligned}
\end{equation}

\begin{corollary}
    [Variance of combined improved GK estimator]
    \label{corollary:gk_combined}
Assume that~$f,g$ satisfy the conditions given in \cref{prop:variance_standard_GK}, and that in addition the conditions from \cref{corollary:gk_time,corollary:gk_zero} hold. Then,
\begin{equation}
    \notag
    \lim_{T\to+\infty} T^{-1}\Var\paren*{\GKcorr{f}{g}} = \frac{2}{K}\norm{f + \L^*\psi_f^*}^2 \ip[\Big]{g + \L\psi_g, -\L^{-1}(g + \L\psi_g)}.
\end{equation}
\end{corollary}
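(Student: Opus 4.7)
The estimator $\GKcorr{f}{g}$ defined in~\eqref{eq:GK_cv_both_estimator} has exactly the structure of the standard Green--Kubo estimator~\eqref{eq:GK_estimator}, with $f$ replaced by $\widetilde f := f + \L^*\psi_f^*$ and $g$ replaced by $\widetilde g := g + \L\psi_g$. My plan is therefore to invoke \cref{prop:variance_standard_GK} with these modified test functions and then rewrite the result in terms of the original data.

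The first task is to verify the mean-zero hypotheses. For $\widetilde g$, we have $\int_\X \L\psi_g \, d\mu = \int_\X \psi_g (\L^*\ind) \, d\mu = 0$, so $\widetilde g \in L^2_0(\mu)$. The fact that $\L^*\psi_f^*$ is mean-zero was already noted in the discussion immediately following \cref{corollary:gk_zero}, so $\widetilde f$ has zero mean as well.

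Next, I would verify the integrability hypotheses required by \cref{prop:variance_standard_GK} when applied to $(\widetilde f, \widetilde g)$. From the conditions in \cref{prop:variance_standard_GK}, we know $f \in L^4(\mu)$ and $G \in L^4(\mu)$ with $\chi_G \in L^2(\mu)$; from \cref{corollary:gk_zero} we inherit $\L^*\psi_f^* \in L^4(\mu)$, hence $\widetilde f \in L^4_0(\mu)$; and from \cref{corollary:gk_time} we inherit $\psi_g \in L^4(\mu)$, $\L\psi_g \in L^2_0(\mu)$, and $\chi_{\psi_g} \in L^2(\mu)$. Writing $-\L^{-1}\widetilde g = G - \psi_g$ (as $\L^{-1}\L\psi_g = \psi_g$ modulo constants, by the mean-zero condition on $\psi_g$ which is assumed without loss of generality), we immediately get $-\L^{-1}\widetilde g \in L^4(\mu)$. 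The only mildly technical step is $\chi_{-\L^{-1}\widetilde g} = \chi_{G-\psi_g} \in L^2(\mu)$, which follows from the elementary pointwise inequality $\chi_{F_1+F_2} \leq 2(\chi_{F_1} + \chi_{F_2})$ --- a consequence of the Cauchy--Schwarz inequality applied to the bilinear form $\Gamma$ defined in~\eqref{eq:carre_du_champ}.

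Applying \cref{prop:variance_standard_GK} to $(\widetilde f, \widetilde g)$ then yields
\[
    \lim_{T\to\infty} T^{-1}\Var\bigl(\GKcorr{f}{g}\bigr)
    = \frac{2}{K}\norm{\widetilde f}^2 \ip[\big]{\widetilde g, -\L^{-1}\widetilde g},
\]
and substituting back $\widetilde f = f + \L^*\psi_f^*$ and $\widetilde g = g + \L\psi_g$ produces the stated formula. I do not foresee any genuine obstacle: the argument is essentially bookkeeping that transfers the hypotheses from the original pair $(f,g)$ to the perturbed pair $(\widetilde f, \widetilde g)$, and the only place one must be slightly careful is the carr\'e du champ bound just mentioned.
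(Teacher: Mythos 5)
Your proof is correct, and it follows the same route the paper implicitly takes: the corollaries in \cref{sec:cv_methodology} are all stated without proof precisely because they follow from \cref{prop:variance_standard_GK} by substituting the modified functions (as the paper remarks after \cref{corollary:gk_time}), which is exactly your strategy of applying the proposition to the pair $\widetilde f = f + \L^*\psi_f^*$, $\widetilde g = g + \L\psi_g$. The only nontrivial bookkeeping you needed — centering of $\L\psi_g$ and $\L^*\psi_f^*$, the identity $-\L^{-1}\widetilde g = G - \psi_g$, and the pointwise carr\'e du champ bound $\chi_{G-\psi_g} \leq 2(\chi_G + \chi_{\psi_g})$ via Cauchy--Schwarz for $\Gamma$ — is handled correctly.
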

The HE improved estimator reads
\begin{equation}
\begin{aligned}
    \HEcorr{f}{g} &= \frac{1}{TK}\sum_{k=1}^K\int_0^T\int_0^t w\paren*{\frac{t-s}{T}}
    \Bigl(f(X^k_s) + \L^*\psi_f^*(X_s^k)\Bigr)\Bigl(g(X^k_t) + \L\psi_g(X_t^k)\Bigr) \, ds \, dt.
    \label{eq:HE_cv_both_estimator}
\end{aligned}
\end{equation}

\begin{corollary}
    [Variance of combined improved HE estimator]
    \label{corollary:he_combined}
Assume that $f,g$ satisfy the conditions given in \cref{prop:var_standard_HE}, and that in addition the conditions from \cref{corollary:he_time,corollary:he_zero} hold. Then, \begin{equation*}
        \lim_{T\to+\infty} \Var\paren*{\HEcorr{f}{g}}
        = \frac{4\zeta_w}{K}\ip[\Big]{f + \L^*\psi_f^*, -\L^{-1}(f + \L^*\psi_f^*)}\ip[\Big]{g + \L\psi_g, -\L^{-1}(g + \L\psi_g)}.
    \end{equation*}
\end{corollary}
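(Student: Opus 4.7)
The plan is to recognize that the combined HE estimator~\eqref{eq:HE_cv_both_estimator} coincides exactly with the standard HE estimator~\eqref{eq:HE_gen_estimator} applied to the shifted pair of functions $\widetilde{f} := f + \L^*\psi_f^*$ and $\widetilde{g} := g + \L\psi_g$, and then to invoke \cref{prop:var_standard_HE} directly on this pair. Since the asymptotic variance formula~\eqref{eq:HE_var_prop_statement_asym} reads $4 \zeta_w K^{-1} \ip{\widetilde{f}, -\L^{-1}\widetilde{f}} \ip{\widetilde{g}, -\L^{-1}\widetilde{g}}$, this will yield exactly the claimed limit after unfolding the definitions of $\widetilde{f}$ and $\widetilde{g}$. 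The work is therefore entirely in verifying that all the integrability and regularity hypotheses of \cref{prop:var_standard_HE} carry over to $(\widetilde{f}, \widetilde{g})$ under the combined hypotheses of \cref{corollary:he_time,corollary:he_zero}.

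I would first check that $\widetilde{f}, \widetilde{g} \in L^4_0(\mu)$: the $L^4$ integrability of $\L^*\psi_f^*$ (from \cref{corollary:he_zero}) and of $\L\psi_g$ (from \cref{corollary:he_time}), combined with $f, g \in L^4_0(\mu)$, gives $\widetilde{f}, \widetilde{g} \in L^4(\mu)$; the mean-zero property follows from $\int_\X \L^*\psi_f^* \, d\mu = \int_\X \psi_f^* (\L \mathbf{1}) \, d\mu = 0$ and the analogous identity for $\L\psi_g$. Next, letting $\widetilde{F} := -\L^{-1}\widetilde{f} = F - \L^{-1}\L^*\psi_f^*$ and $\widetilde{G} := -\L^{-1}\widetilde{g} = G - \psi_g$ (where for the latter I use that $\psi_g \in L^2_0(\mu)$ and $\L\psi_g \in L^2_0(\mu)$, so $\L^{-1}\L\psi_g = \psi_g$ on $L^2_0(\mu)$), the hypotheses $F, \L^{-1}\L^*\psi_f^* \in L^4(\mu)$ and $G, \psi_g \in L^4(\mu)$ give $\widetilde{F}, \widetilde{G} \in L^4(\mu)$.

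For the carré du champ conditions $\chi_{\widetilde{F}}, \chi_{\widetilde{G}} \in L^4(\mu)$, I would use the elementary sub-additivity estimate
\[
    \chi_{A+B} = \nabla(A+B)^\t \sigma\sigma^\t \nabla(A+B) \leq 2\, \chi_A + 2\, \chi_B,
\]
which follows from $(a+b)^\t M (a+b) \leq 2 a^\t M a + 2 b^\t M b$ for any symmetric positive semidefinite $M$. Applied to $\widetilde{F} = F - \L^{-1}\L^*\psi_f^*$ and $\widetilde{G} = G - \psi_g$, this reduces the claim to $\chi_F, \chi_{\L^{-1}\L^*\psi_f^*}, \chi_G, \chi_{\psi_g} \in L^4(\mu)$, all of which are in hand from the conditions of \cref{prop:var_standard_HE,corollary:he_time,corollary:he_zero}. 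With all hypotheses of \cref{prop:var_standard_HE} verified, applying it to $(\widetilde{f}, \widetilde{g})$ directly yields the desired formula. There is no real obstacle in this proof; the only mildly delicate point is the sub-additivity bound for the carré du champ, and even that is handled in a single line once one exploits the quadratic structure of $\chi$ in $\nabla F$.
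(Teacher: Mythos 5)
Your proposal is correct and takes exactly the route the paper intends: the combined estimator~\eqref{eq:HE_cv_both_estimator} is the plain HE estimator~\eqref{eq:HE_gen_estimator} applied to the shifted pair $\widetilde f = f + \L^*\psi_f^*$, $\widetilde g = g + \L\psi_g$, and the limit follows by substituting this pair into \cref{prop:var_standard_HE}. The paper states the corollary without a separate proof, so your verification of the hypotheses (mean-zero of $\L^*\psi_f^*$ and $\L\psi_g$ via $\L\ind = \L^*\ind = 0$, closedness of $L^4_0(\mu)$ under sums, the identities $\widetilde F = F^* - \psi_f^*$ up to the $\L^{-1}\L^*$ detour and $\widetilde G = G - \psi_g$, and the sub-additivity $\chi_{A+B} \leq 2\chi_A + 2\chi_B$ from the quadratic structure of the carr\'e du champ) simply fills in detail the paper leaves implicit.
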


 \section{Applications}
\label{sec:numerics}
The aim of this section is to apply the control variate methods discussed in \cref{sec:cv_methodology} to different systems in order to assess the effectiveness of the various estimators. We start by discussing in \cref{subsec:PINNs} the general approach for solving Poisson equations using neural networks. We then apply the method to the underdamped Langevin dynamics in \cref{subsec:lang_application}, and to multiscale SDEs in \cref{subsec:multiscale}.

\subsection{Approximate solutions to Poisson equations with neural networks}
\label{subsec:PINNs}
We now discuss the overall strategy for approximating the solution to Poisson equations using PINNs. In particular, we discuss the loss function used and its discretization, and outline the training strategy common to all examples in this section. More specific architectural choices, such as activation functions, featurization strategies and network topology are overviewed in the respective numerical application sections to come.

\paragraph{Loss function} Let us consider the following Poisson equation on $\X$:
\begin{equation}
    \notag
    -\L\phi = f,% \qquad x \in \X,
\end{equation}
with $f \colon \X \to \R$. We denote by $\phi^\mathrm{NN}_\theta(x)$ the neural network approximation of $\phi$ parametrized by a set of parameters $\theta$. The pointwise loss function we consider is the mean squared error of the residual $r_\theta(x) = \L\phi^\mathrm{NN}_\theta(x) + f(x)$. At the continuous level, the loss function is then
\begin{equation}
    \Loss(\theta) = \int_\mathcal{X} \left(\L\phi^\mathrm{NN}_\theta(x) + f(x)\right)^2 \mu(dx).
    \label{eq:loss_continuous}
\end{equation}
In practice, the integral in \eqref{eq:loss_continuous} is approximated with a sum over the data points $\{\x^n\}_{n=1}^N \in \R^d$ at which the functions are evaluated. For the situations we consider, the points are directly sampled from $\mu$ via rejection sampling, or by drawing random variables according to Gaussian distributions. Additionally, we consider centered finite differences to approximate differential operators in $\L$ acting on $\phi$, as automatic differentiation becomes too cumbersome for derivatives of order 3 and higher. Thus, automatic differentiation is only used to take the gradient of the network with respect to the parameters during training.
The finite difference discretizations are implemented with optimal stepsize (see~\cite{sauer2011}), namely~$\sqrt[3]{\epsilon_{\rm mach}}$ for central first derivative approximations, and~$\sqrt[4]{\epsilon_{\rm mach}}$ for central second derivative approximations, where~$\epsilon_{\rm mach}$ denotes the machine epsilon.
To this end, let~$\L^\Delta$ denote the discretization of~$\L$. Then, our loss function is of the form
\begin{equation}
    \notag
    \Loss(\theta) = \frac{1}{N}\sum_{n=1}^N \abs{\L^\Delta\phi^\mathrm{NN}_\theta(\x^n) + f(\x^n)}^2.
\end{equation}
A similar loss is considered to solve the Poisson equation $-\L^*\phi^* = f$.
%Let us similarly define $\phi^*$ as the solution to the adjoint Poisson equation $-\L^*\phi^* = f^*$, with $\Loss^*(\theta)$ the associated loss function.

\paragraph{Training strategy} The same overall strategy was used to train the networks for both examples here presented.
The computational cost associated with sampling the data points is negligible, allowing us to compute a stochastic approximation of the gradient with a batch of~$N$ new points at every step. Since the usual overfitting risks associated with a finite training set therefore do not apply, we performed a fixed number of training steps and did not implement early stopping. Training was performed using the Adam optimizer \cite{kingma2017}.

The network topologies considered were simple networks with two dense hidden layers, and the same activation function throughout, in particular $\tanh$ for both examples. Other activation functions were explored, e.g., regularized ReLU-type functions such as GELU \cite{hendrycks2023} were observed to perform marginally better than $\tanh$, however at an increased computational cost. The addition of the second hidden layer significantly improves the quality of training, while still retaining a sufficiently low cost. Based on numerical observations from experimental runs, width has a more significant impact on the quality of training than depth for a fixed number of network parameters.

All training parameters are made precise in \cref{table:training_params} for the Langevin dynamics and multiscale dynamics examples.
Let us emphasize that a perfect training is anyway not necessary, as approximate solutions to the Poisson equation at hand are sufficient to obtain variance reduction.

\begin{table}[htb]
    \footnotesize
    \begin{center}
    \caption{Training parameters for training the network for the two-dimensional Langevin dynamics and multiscale dynamics systems.}
    \begin{tabular}{ccc}
    \toprule
        \textbf{Parameter} & \textbf{Langevin} & \textbf{Multiscale} \\
        \midrule
        Adam learning rate & 0.002 & 0.002 \\
        Batch size $N$ & 500 & 1000 \\
        Number of training steps & 2000 & 1000 \\
        \bottomrule
    \end{tabular}
    \label{table:training_params}
    \end{center}
\end{table}

\begin{figure}[htb!]
    \centering
    \includegraphics[width=0.8\linewidth]{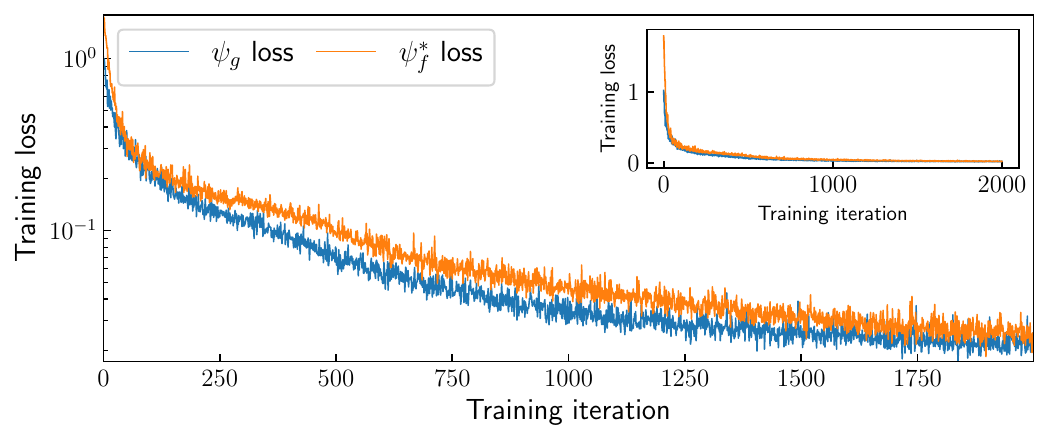}
    \caption{Training iteration steps vs training loss for the 2D Langevin dynamics for both $\psi_g$ and $\psi_f^*$ approximate solutions. Top: linear scale. Bottom: logarithmic y-axis.}
    \label{fig:training_loss}
\end{figure}

The same training routine was used to approximate $\phi$ and $\phi^*$. \cref{fig:training_loss} shows the training loss as a function of training steps for both approximations for the Langevin dynamics example.

\subsection{Application to the underdamped Langevin dynamics}
\label{subsec:lang_application}
We apply in this section the methodology to compute the mobility of the two-dimensional underdamped Langevin dynamics on the torus. We start by giving a brief description of the system in \cref{subsubsec:description_2d-lang}, followed by a presentation of the numerical results in \cref{subsubsec:num_results_2d-lang}.

\subsubsection{Description of the system}
\label{subsubsec:description_2d-lang}
Let us briefly describe the system under consideration. The two-dimensional Langevin dynamics evolves the positions $q = (q_1,q_2)\in \T^2$ and momenta $p = (p_1,p_2)\in \R^2$ (so that $\X = \T^2 \times \R^2$) according to the following SDE:
\begin{align}
\begin{split}
\begin{cases}
    dq_t = M^{-1}p_t \, dt, \\
    dp_t = -\nabla V(q_t) \, dt - \gamma M^{-1}p_t \, dt + \sqrt{\dfrac{2\gamma}{\beta}} \, dW_t,
    \label{eq:langevin_dynamics}
\end{cases}
\end{split}
\end{align}
where $M \in \R^{2\times 2}$ denotes the mass matrix, $\gamma>0$ the friction coefficient, $\beta>0$ the inverse temperature and $V$ a smooth periodic potential energy function. The generator of the process~\eqref{eq:langevin_dynamics} is
\begin{equation}
    \label{eq:lang_generator}
    \L = p^\t M^{-1}\nabla_q - \nabla V^\t \nabla_p + \gamma(-p^\t M^{-1}\nabla_p + \beta^{-1}\Delta_p).
\end{equation}
The dynamics \eqref{eq:langevin_dynamics} is ergodic with respect to the Boltzmann--Gibbs probability measure:
\begin{equation}
    \notag
    \mu(dq \, dp) = \frac{1}{Z}\e^{-\beta H(q,p)} \, dq \, dp, \qquad Z = \int_{\T^2\times \R^2}\e^{-\beta H(q,p)} dq \, dp.
\end{equation}
The transport coefficient of interest is the mobility in the direction $\evec$. It is defined as the proportionality constant between the magnitude $\eta$ of an exerted constant force proportional to $\evec$, and the induced drift velocity $\E(f_\evec)$ in this direction, with $f_\evec = \evec^\t M^{-1}p$. The relationship is linear for~$\eta\to 0$, as dictated by linear response theory; see for instance \cite[Chapter 8]{chandler1987} and \cite[Section 5]{lelievre2016}. Through an equilibrium formulation of the linear response based on velocity autocorrelations \cite{resibois1977}, it can in particular be written in terms of the solution to a Poisson equation:
\begin{equation}
    \notag
    \rho = \beta\ip{\phi, f_\evec}, \qquad -\L\phi(q,p) = f_\evec(q,p).
\end{equation}
%It can similarly be defined in terms of the $L^2(\mu)$-adjoint as $\rho = \ip{\phi^*,f_\evec}$, where $-\L^*\phi^* = \beta f_\evec$.
Note that the mobility is proportional up to factor $\beta$ to the self-diffusion coefficient through Einstein's relation; see \cite{rodenhausen1989,latorre2013}.
The points $\{\x^n\}_{n=1}^N = (q_1,q_2,p_1,p_2)_n \in \R^4$ are i.i.d.\ according to the target measure $\mu$, sampled in practice using rejection sampling for the positions, and by sampling a Gaussian distribution for the momenta.

\subsubsection{Numerical results}
\label{subsubsec:num_results_2d-lang}
We consider the following nonseperable potential energy function:
\begin{equation}
    \notag
    V(q_1, q_2) = -\frac{\cos(2q_1) + \cos(q_2)}{2} - \delta\cos(q_1)\cos(q_2),
\end{equation}
with $\delta\in\R$ the degree of nonseparability. The numerical results presented here correspond to the value $\delta = 0.5$. Additionally, we set the physical parameters $\beta = \gamma = 1$ and $M = \Id_2$.

\paragraph{Structure of the network} Aside from the general training strategy outlined in \cref{subsec:PINNs}, the network considered in this example consists of two dense layers with intermediate dimension 15, with four input nodes and one output. Additionally, we employ a transformation on the inputs in the form of a functional ``featurization layer'': the input $(q_1,q_2,p_1,p_2)$ is transformed into a 7-node output by the functions
\begin{equation}
    \label{eq:featurization}
    \mathrm{featurization}\paren*{q_1,q_2,p_1,p_2} = \paren*{\sin(q_1), \cos(q_1), \sin(q_2), \cos(q_2), p_1, p_2, \frac{p_1^2 + p_2^2}{2}},
\end{equation}
which is then fed to the first trainable layer; see \cref{appendix:nn_architecture} for a discussion on the featurization and \cref{fig:2d-lang_network_both} for an illustration of the network topology. The periodic nature of the positions is encoded by the sines and cosines in the featurization layer, while the momenta are directly included. Lastly, the kinetic energy term was observed to be a meaningful feature in improving training, and is thus considered.

\paragraph{Monte--Carlo simulations} To numerically integrate \eqref{eq:langevin_dynamics}, we used the BAOAB splitting scheme \cite{leimkuhler2013}, with an integration time of~$T=5$ for the Green--Kubo estimator and~$T=10$ for the half-Einstein estimator, with timestep $\Delta t = 0.01$ for both. We ran $K=1000$ independent realizations of the dynamics in order to empirically estimate the variance. Initial conditions for each realization were also sampled from $\mu$ using rejection sampling for the positions.

\begin{figure}[ht]
    \centering
    \includegraphics[width=\linewidth]{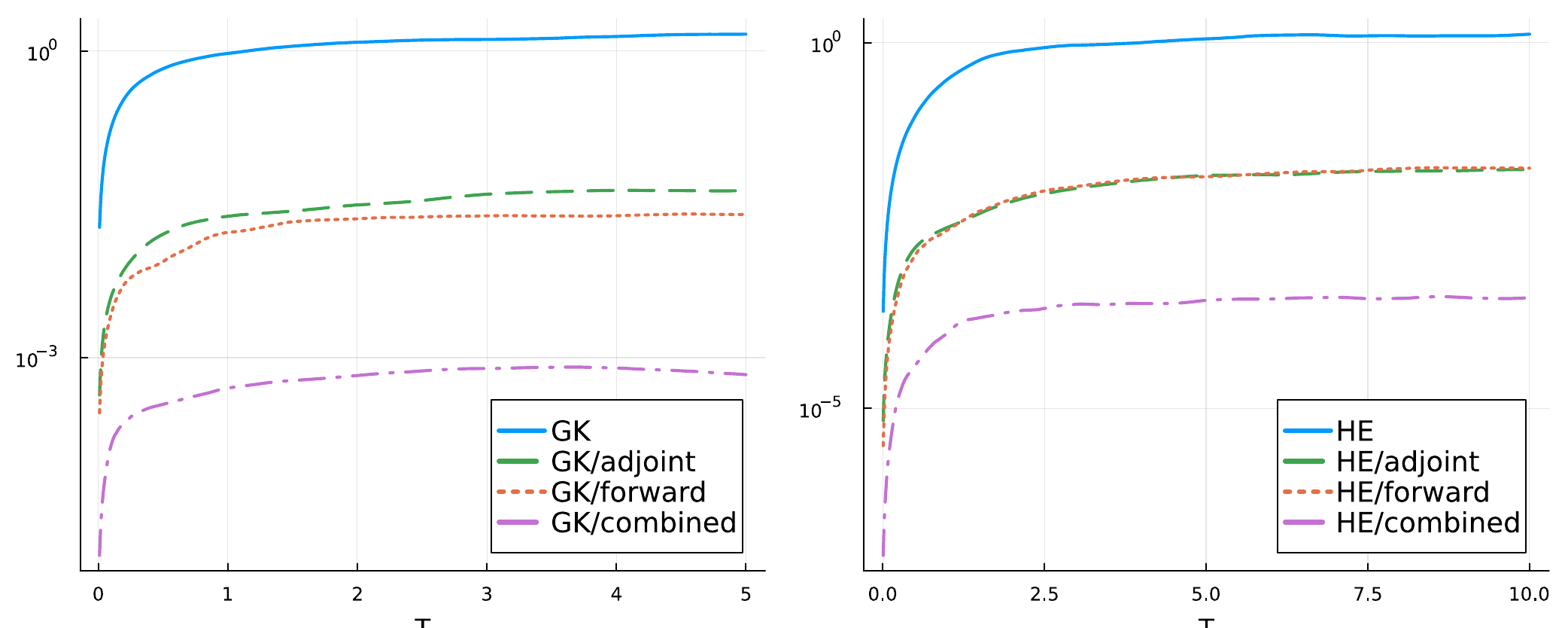}
    \caption{Asymptotic variance over time for all estimators for the two-dimensional Langevin dynamics. Left: Green--Kubo estimators. Right: half-Einstein estimators. For Green--Kubo, the asymptotic variance is defined as the variance divided by time, while the two coincide for half-Einstein.}
    \label{fig:2d-lang_variances}
\end{figure}
Simulation results are illustrated in \cref{fig:2d-lang_variances}, which shows the variance as a function of time for all 8 estimators considered. A plot of the asymptotic variance is also shown, which corresponds to the variance divided by time for Green--Kubo estimators, while it coincides with the variance for half-Einstein estimators. A summary of the results is compiled in \cref{tab:2d-lang_normalized}, which provides a quantitative metric for assessing the viability of each estimator. In particular, it shows the computational runtime, the variance and the \emph{cost} (defined as the product of runtime and variance) for each estimator.

The numerical results suggest that the use of both control variates (for both GK and HE) is the best option according to the cost. This, however, requires a decently trained network, as the forward adjoint is computed every step of the Monte Carlo simulation, thus it must provide enough variance reduction in order to offset the additional computational cost. The Green--Kubo adjoint estimator, on the other hand, is a (near) ``zero-cost'' control variate, as it is only evaluated once. This suggests that it is much more forgiving of nonoptimal training runs.

\begin{table}[tbh]
\footnotesize
    \captionsetup{position=top}
    \caption{Computational runtimes required to compute the self-diffusion coefficient for the two-dimensional Langevin dynamics and associated variances at time $T=5$, based on $K=1000$ realizations of the dynamics. The cost is defined as the runtime times the variance.}\label{tab:2d-lang_normalized}
\begin{center}
    \subfloat[Green--Kubo data, normalized in terms of the standard GK estimator runtime of $0.157$ seconds, and associated variance $7.331$.]{\label{tab:2d-lang_normalized_gk}
        \begin{tabular}{cccc}
\toprule
\textbf{Estimator}       & \textbf{Runtime} & \textbf{Variance at $T=5$} & \textbf{Cost} \\
\midrule
GK~\eqref{eq:GK_estimator} & $1$  & $1$ & $1$ \\
GK/adjoint~\eqref{eq:GK_cv_0_estimator}  & $0.957$  & $0.0294$ & $0.0281$ \\
GK/forward~\eqref{eq:GK_cv_t_estimator}  & $22.9$ & $0.0172$ & $0.394$ \\
GK/combined~\eqref{eq:GK_cv_both_estimator} & $25.1$ & $0.000465$ & $0.0117$ \\
\bottomrule
\end{tabular}}

    \subfloat[Half-Einstein data, normalized in terms of the standard HE estimator runtime of $0.297$ seconds, and associated variance $1.313$.]{\label{tab:2d-lang_normalized_he}
        \begin{tabular}{cccc}
\toprule
\textbf{Estimator}       & \textbf{Runtime} & \textbf{Variance at $T=10$} & \textbf{Cost} \\
\midrule
HE~\eqref{eq:HE_gen_estimator}           & $1$ & $1$ & $1$ \\
HE/adjoint~\eqref{eq:HE_cv_0_estimator}  & $23.9$ & $0.0140$ & $0.336$ \\
HE/forward~\eqref{eq:HE_cv_t_estimator}  & $22.6$ & $0.0147$ & $0.332$ \\
HE/combined~\eqref{eq:HE_cv_both_estimator}  & $44.6$ & $0.000246$ & $0.0109$ \\
\bottomrule
\end{tabular}}
\end{center}
\end{table}

\subsection{Application to multiscale SDEs}
\label{subsec:multiscale}
The aim of this subsection is to present an application of the control variate approaches studied in~\cref{sec:cv_methodology},
with slight adjustments,
to fast/slow systems of stochastic differential equations.
We first describe briefly,
in~\cref{ssub:homogenization_for_multiscale_SDEs},
the method of homogenization for multiscale SDEs.
Then, in~\cref{ssub:control_variate_approaches_for_calculating_the_effective_coefficients},
we explain how the control variate approaches from~\cref{sec:cv_methodology} can be applied for the calculation of the effective coefficients of the homogenized equation.
Finally, in~\cref{ssub:numerical_example},
we illustrate the performance of the method on a concrete example.

\subsubsection{Homogenization for multiscale SDEs}\label{ssub:homogenization_for_multiscale_SDEs}
In this section,
we consider the following multiscale system with state space $\real^{d_x} \times \real^{d_y}$,
for some small parameter~$\varepsilon>0$:
\begin{equation}
    \label{eq:multiscale}
    \left\{
    \begin{aligned}
        \d X^{\varepsilon}_t &= \frac{1}{\varepsilon} f(X^{\varepsilon}_t, Y^{\varepsilon}_t) \, \d t, \\
        \d Y^{\varepsilon}_t &= \frac{1}{\varepsilon^2} g(Y^{\varepsilon}_t) \, \d t + \frac{1}{\varepsilon} \alpha(Y^{\varepsilon}_t) \, \d W_t,
    \end{aligned}
    \right.
\end{equation}
where $f\colon \real^{d_x} \times \real^{d_y} \to \real^{d_x}$, $g\colon \real^{d_y} \to \real^{d_y}$
and $\alpha\colon \real^{d_y} \to \real^{d_y \times d_w}$
are smooth functions and~$(W_t)_{t\geq 0}$
is a standard Brownian motion in $\real^{d_w}$.
This simple setting,
or a variation thereof where the drift and diffusion coefficients of the fast process also include lower order terms allowed to depend on~$X^{\varepsilon}_t$,
is often considered in the literature,
for example in numerical works~\cite{vandeneijnden2003,e2005}.
The infinitesimal generator associated with the fast process~$Y^{\varepsilon}$ is given by~$\varepsilon^{-2} \mathcal L$,
where
\[
    \mathcal L = g(y) \cdot \nabla_y + \frac{1}{2} \alpha(y) \alpha(y)^\t : \nabla_y^2.
\]
By the theory of homogenization for multiscale SDEs,
if the fast process admits a unique ergodic distribution~$\mu \in \mathcal P(\real^{d_x})$,
and if the following centering condition is satisfied,
\[
    \forall x \in \real^{d_x}, \qquad
    \int_{\real^{d_x}} f(x, y) \, \mu(\d y) = 0,
\]
then in the limit $\varepsilon \to 0$ the slow process $(X^{\varepsilon}_t)_{t \geq 0}$ converges weakly in $C[0, T]$ to the solution of the homogenized SDE:
%following simplified, single-scale stochastic differential equation:
\begin{equation}
    \label{eq:homogenized_equation}
    \d \overline X = F(\overline X) + A(\overline X) \, \d V_t.
\end{equation}
Here $V_t$ is a standard Brownian motion in~$\real^{d_x}$ and the so-called effective drift and diffusion coefficients are given by
\[
    F(x) = \int_{\real^{d_y}} \Bigl( f(x, y) \cdot \nabla_x \Bigr) \Phi(x, y) \, \mu(\d y) \in \real^{d_x},
    \qquad
    A(x) A(x)^\t = A^0(x) + A^0(x)^\t \in \real^{d_x \times d_x},
\]
where $\Phi$ is a solution to
\(
- \mathcal L^x \Phi(x, y) = f(x, y)
\)
and
\[
    A^0(x) = \int_{\real^{d_y}} f(x, y) \otimes \Phi(x, y) \, \mu^x(\d y).
\]
We refer to~\cite[Chapter 18]{pavliotis2008} for a rigorous proof in a simplified setting,
and to~\cite{pardoux2001,pardoux2003} for extensions and additional results.
Since formally $\nabla_x \mathcal L^{-1} = \mathcal L^{-1} \nabla_x$,
given that $\mathcal L$ only includes derivatives with respect to~$y$,
the coefficients of the homogenized equation~\eqref{eq:homogenized_equation} are of the form~\eqref{eq:tc_ip}.
More precisely, the components of the coefficients~$F$ and~$A^0$
can be written as
\begin{equation}
    \label{eq:homogenized_coefficients}
    F_i(x) = \sum_{j=1}^{d_x} \Bigl\langle f_j(x, \placeholder) , - \mathcal L^{-1} \mathfrak h_{ij}(x, \placeholder)  \Bigr\rangle,
    \qquad
    A^0_{ij}(x) = \Bigl\langle f_i(x, \placeholder) , - \mathcal L^{-1} f_j(x, \placeholder) \Bigr\rangle,
    \qquad
\end{equation}
where $\mathfrak h_{ij}(x, \placeholder) = \partial_{x_j} f_i(x, \placeholder)$,
and with $\ip{\placeholder, \placeholder}$ the inner product with respect to~$L^2(\mu)$,
with~$x$ viewed as a fixed parameter.
It follows from~\eqref{eq:homogenized_coefficients} that
the homogenized coefficients can be approximated using estimators based on the Green--Kubo formula~\eqref{eq:GK},
such as those studied in~\cref{sec:standard_formulas}.
This constitutes the basic idea behind the so-called \emph{heterogeneous multiscale method} for multiscale SDEs~\cite{vandeneijnden2003,e2005}.
% a numerical method for solving~\eqref{eq:multiscale} based on the resolution of the homogenized dynamics~\eqref{eq:homogenized_equation} using a classical integration scheme for SDEs,
% combined with the calculation, at each time step,
% of the effective coefficients~$F,A^0$ using an estimator similar to the one studied in~\cref{subsec:gen_HE}.
The variance reduction method we describe in \cref{ssub:control_variate_approaches_for_calculating_the_effective_coefficients}
is based on the specific estimators given in~\cref{sec:cv_methodology},
but we note that it can easily be adapted to the estimators used within the heterogeneous multiscale method,
such as those given in~\cite[Section~3.1]{e2005}.

\subsubsection{Control variate approaches for calculating the effective coefficients}\label{ssub:control_variate_approaches_for_calculating_the_effective_coefficients}
In this section,
we describe how the control variate approaches described in~\cref{sec:cv_methodology} can be employed for the calculation of the coefficients of the homogenized equation~\eqref{eq:homogenized_equation}.
The main difference with the other examples considered so far is the presence of an~$x$-dependence in~\eqref{eq:homogenized_coefficients}
and the associated Poisson equations
\begin{equation}
    \label{eq:poisson_with_x}
    - \mathcal L \mathscr H_{ij}(x, y) = \mathfrak h_{ij}(x, y),
    \qquad
    - \mathcal L \mathscr F_{i}(x, y) = f_{i}(x, y),
\end{equation}
as well as in the associated adjoint equation
\begin{equation}
    \label{eq:poisson_adjoint_x}
    - \mathcal L^* \mathscr F^*_i(x, y) = f_i(x, y).
\end{equation}
The control variate methods described in~\cref{sec:cv_methodology} require approximate solutions
to~\eqref{eq:poisson_with_x} or~\eqref{eq:poisson_adjoint_x} or both.
We next describe two possible approaches for constructing these approximate solutions:
\begin{itemize}[leftmargin=*]
    \item
        A first approach would be to solve the Poisson equations
        -- \eqref{eq:poisson_with_x} or~\eqref{eq:poisson_adjoint_x} or both, depending on which method from~\cref{sec:cv_methodology} is considered --
        for each fixed~$x$ separately as needed.
        Combined with a classical time-stepping algorithm for the homogenized equation~\eqref{eq:homogenized_equation},
        this would lead to a numerical method where a number of Poisson PDEs in dimension~$d_y$  must be solved at each time step,
        much like the method studied in~\cite{abdulle2017}.

    \item
        An alternative approach,
        used in~\cref{ssub:numerical_example},
        is to solve the Poisson equations~\eqref{eq:poisson_with_x} for all $x \in \real^{d_x}$
        and all vector components simultaneously.
        This is not simple to achieve using traditional PDE solvers,
        but simple to implement using neural networks.
        For example,
        in order to numerically solve the second equation in~\eqref{eq:poisson_with_x},
        we search for an approximate vector-valued solution~$\mathscr F^{\rm NN}(x, y; \theta)$
        within a class of functions parametrized by a neural network with parameters~$\theta$.
        The vector of parameters~$\theta$ is found by minimizing the following loss function
        \begin{equation}
            \label{eq:loss_multiscale}
            \mathscr L(\theta) = \int_{\real^{d_x} \times \real^{d_y}}
            \sum_{i=1}^{d_x} \Bigl\lvert f_i(x, y) + \mathcal L \mathscr F^{\rm NN}_i(x, y; \theta) \Bigr\rvert^2 \, \nu(\d x) \, \mu(\d y),
        \end{equation}
        where $\mu$ is, as before,
        the invariant probability measure associated with the fast processes in~\eqref{eq:multiscale},
        and $\nu \in \mathcal P(\real^{d_x})$ is a probability measure on~$\real^{d_x}$ that
        must be fixed a priori as part of the method.
        A natural choice, when~$\real^{d_x}$ is a compact state space,
        is to let~$\nu$ be the uniform distribution on~$\real^{d_x}$.
        When~$\real^{d_x}$ is non-compact,
        the choice of $\nu$ determines the region of~$\real^{d_x}$
        where we wish~$\mathscr F^{\rm NN}(x, \placeholder; \mathbf c)$ to be a good approximation of the exact solution~$\mathscr F(x, \placeholder)$,
        and where variance reduction can be expected as a result.
        In practice, the loss function~\eqref{eq:loss_multiscale} of course needs to be discretized,
        for example by estimating the integral via Monte Carlo sampling.
\end{itemize}
To conclude this subsection,
let us mention that it is not necessary to solve the first equation in~\eqref{eq:poisson_with_x} explicitly.
Indeed, the function $\mathscr H_{ij}(x, y)$ is the $x$-gradient of~$\mathscr F_{i}(x, y)$,
and so a numerical approximation of $\mathscr H_{ij}(x, y)$ can be obtained by taking the $x$-gradient of $\mathscr F^{\rm NN}_i(x, y; \theta)$.

\subsubsection{Numerical example}\label{ssub:numerical_example}

In this section,
we illustrate the performance of the control variate strategies presented in~\cref{sec:cv_methodology},
when the approximate solutions to Poisson equations are calculated simultaneously for all~$x$ by a neural network approach.
We consider the following simple example of a multiscale SDE,
taken from~\cite[Section~11.7.7]{pavliotis2008},
to which we refer for additional context:
\begin{equation}
    \label{eq:example_multiscale}
    \left\{
        \begin{aligned}
            \d X_1 &= \frac{1}{\varepsilon} Y_1 \, \d t \, , \\
            \d X_2 &= \frac{1}{\varepsilon} Y_2 \, \d t \, , \\
            \d X_3 &= \frac{1}{\varepsilon} (X_1 Y_2 - X_2 Y_1) \, \d t \, , \\
            \d Y_1 &= - \frac{Y_1}{\varepsilon^2} \, \d t - \alpha \frac{Y_2}{\varepsilon^2} \, \d t + \frac{1}{\varepsilon} \d W^1_t \, ,\\
            \d Y_2 &= - \frac{Y_2}{\varepsilon^2} \, \d t + \alpha \frac{Y_1}{\varepsilon^2} \, \d t + \frac{1}{\varepsilon} \d W^2_t \, ,
        \end{aligned}
    \right.
\end{equation}
where $\alpha > 0$ and $W^1, W^2$ are standard independent Brownian motions.
In this example, $d_x = 3$ and $d_y = 2$,
so that the total dimension is equal to 5.
An application of the homogenization result in~\cref{ssub:homogenization_for_multiscale_SDEs} leads to the following homogenized equation:
\begin{equation}
    \notag
    \left\{
        \begin{aligned}
            \d \overline X_1 &= \frac{1}{1 + \alpha^2} \bigl(\d V^1_t - \alpha \, \d V^2_t\bigr) \, , \\
            \d \overline X_2 &= \frac{1}{1 + \alpha^2} \bigl(\d V^2_t + \alpha \, \d V^1_t\bigr) \, , \\
            \d \overline X_3 &= \frac{1}{1 + \alpha^2} \, \d t + \frac{1}{1 + \alpha^2} \Bigl( \bigl(\alpha \overline X_1 - \overline X_2\bigr) \, \d V^1_t + \bigl(\alpha \overline X_2 + \overline X_1\bigr) \, \d V^2_t \Bigr),
        \end{aligned}
    \right.
\end{equation}
where $(V^1)_{t\geq 0}$ and $(V^2)_{t\geq 0}$ are independent Brownian motions.
We remark that an additional term appears in the equation for~$\overline X_3$,
compared to what might be expected from a heuristic approach,
due to the L\'evy area correction; see~\cite[Section 11.7.7]{pavliotis2008} for details on this point.
In this numerical experiment,
we fix $\alpha = 1$ and compare the performance of the estimators given in~\cref{sec:standard_formulas,sec:cv_methodology} for the computation of the homogenized coefficients.
To this end,
we solve the Poisson equations (both the forward equations involving~$\mathcal L$,
and the adjoint equations involving~$\mathcal L^*$),
simultaneously for all~$x$.
Furthermore, we solve the vector-valued Poisson equations simultaneously for all the components.
For example, in order to solve the Poisson equations
\begin{equation}
    \label{eq:poisson_drift_multiscale}
    - \mathcal L \mathscr F(x, \placeholder) = f(x, \placeholder),
\end{equation}
with $f(x, y) \in \real^3$ the drift of the slow process,
we use a neural network consisting of two dense hidden layers of dimension 12. The network comprises 5 inputs,
corresponding to the five variables~$(x_1, x_2, x_3, y_1, y_2)$,
and three outputs,
corresponding to the three components of the vector valued solution; see \cref{fig:2d-lang_network_both} for an illustration of the network architecture.
For the measure~$\nu$,
we take the uniform distribution over the cube $[-4, 4]^3$,
and for~$\mu$ we take the invariant measure of the fast processes.
Since $(Y_1, Y_2)$ is an Ornstein--Uhlenbeck process,
it follows from~\cite[Chapter~9]{lorenzi2007} that
the latter probability measure is given by the normal distribution~$\mathcal N\left(0,\frac{1}{2}\I_2\right)$.
The other training parameters are given in~\cref{table:training_params}.

Having calculated approximate solutions to~\eqref{eq:poisson_drift_multiscale} and its adjoint counterpart,
we then fix a value for $(X_1, X_2, X_3) = (-0.0057, 1.73, -1.04)$,
randomly drawn from the standard normal distribution in dimension 3,
and evaluate the quality of variance reduction for the estimators in~\cref{sec:cv_methodology},
when these are used to calculate the homogenized coefficients for this particular value of~$(X_1, X_2, X_3)$.
The static parts of all the estimators in~\cref{sec:cv_methodology} are calculated by numerical quadrature.

The time required on a personal laptop to evaluate all the estimators from~\cref{sec:cv_methodology},
not including the time required to train the neural networks,
is presented in \cref{table:runtime_multiscale} for~$K = 1000$ realizations.
To produce these numerical results,
an Euler--Maruyama discretization with a time step equal to $0.01 \varepsilon^2$ was employed to integrate the fast dynamics,
with the slow variables frozen at the value given above.
It appears from the tables that the estimators using adjoint control variates are faster to evaluate than
those using forward control variates,
and this difference is especially pronounced for the Green--Kubo estimators.
This is expected,
as the estimator~\eqref{eq:GK_cv_0_estimator} requires to evaluate the control variate only at the initial time. For half-Einstein, the additional cost of the forward estimator (compared to the adjoint estimator) is due to an additional gradient computation, present only in the forward case.

The tables also present the variance of all the estimators with~$K = 1$ realization,
when the integration time for the fast dynamics is set to $T/\varepsilon^2 = 5$.
These variances were calculated empirically based on 1000 independent realizations.
The third column, which contains the product of the runtime with the variance,
is a measure of the cost,
as in~\cref{subsec:lang_application}.
From this column we observe that,
both for the Green--Kubo and for the half-Einstein estimators,
the combined control variate method from~\cref{subsec:both_cv} performs best.
Furthermore the variance reduction obtained with the forward and adjoint control variates are similar.

\begin{table}[tbhp]
\footnotesize
    \captionsetup{position=top}
    \caption{Computational time required to calculate the homogenized drift and diffusion coefficients associated with the multiscale system~\eqref{eq:example_multiscale},
        based on $K = 1000$ realizations of the fast dynamics over a time horizon $T/\varepsilon^2 = 5$
        and for one fixed value of $(X_1, X_2, X_3) = (-0.0057, 1.73, -1.04)$. The cost is defined as the runtime times the variance.}\label{table:runtime_multiscale}
\begin{center}
    \subfloat[Green--Kubo data, normalized in terms of the standard Green--Kubo estimator runtime of $0.179$ seconds, and associated variance $29.382$.]{\label{table:runtime_multiscale_gk}
        \begin{tabular}{cccc} \toprule
        \textbf{Estimator} & \textbf{Runtime} & \textbf{Variance at $T/\varepsilon^2 = 5$} & \textbf{Cost} \\ \midrule
%        GK~\eqref{eq:GK_estimator} & 0.179 & 29.382 & 5.27 \\
%        GK/forward~\eqref{eq:GK_cv_t_estimator} & 13.785 & 0.206 & 2.85  \\
%        GK/adjoint~\eqref{eq:GK_cv_0_estimator} & 0.248 & 0.376 & 0.0935 \\
%        GK/combined~\eqref{eq:GK_cv_both_estimator} & 16.276 & 0.00306 & 0.0497 \\
        GK~\eqref{eq:GK_estimator} & 1 & 1 & 1 \\
        GK/forward~\eqref{eq:GK_cv_t_estimator} & 77.0 & 0.00701 & 0.54  \\
        GK/adjoint~\eqref{eq:GK_cv_0_estimator} & 1.39 & 0.0128 & 0.0177 \\
        GK/combined~\eqref{eq:GK_cv_both_estimator} & 90.9 & 0.000104 & 0.00943 \\ \bottomrule
        \end{tabular}}

    \subfloat[Half-Einstein data, normalized in terms of the standard half-Einstein estimator runtime of $0.959$ seconds, and associated variance $3.44555$.]{\label{table:runtime_multiscale_he}
        \begin{tabular}{cccc} \toprule
        \textbf{Estimator} & \textbf{Runtime} & \textbf{Variance at $T/\varepsilon^2 = 5$} & \textbf{Cost} \\ \midrule
%        HE~\eqref{eq:HE_gen_estimator} & 0.959 & 3.44555 & 3.31 \\
%        HE/forward~\eqref{eq:HE_cv_t_estimator} & 15.465 & 0.03678 & 0.569 \\
%        HE/adjoint~\eqref{eq:HE_cv_0_estimator} & 4.03 & 0.04346 & 0.175 \\
%        HE/combined~\eqref{eq:HE_cv_both_estimator} & 17.697 & 0.00066 & 0.0117 \\
        HE~\eqref{eq:HE_gen_estimator} & 1 & 1 & 1 \\
        HE/forward~\eqref{eq:HE_cv_t_estimator} & 16.13 & 0.0107 & 0.172 \\
        HE/adjoint~\eqref{eq:HE_cv_0_estimator} & 4.20 & 0.0126 & 0.0529 \\
        HE/combined~\eqref{eq:HE_cv_both_estimator} & 18.45 & 0.000192 & 0.00354 \\ \bottomrule
    \end{tabular}}
\end{center}
\end{table}

The variances associated with all the estimators,
as a function of time, are illustrated in~\cref{fig:standard_deviations_gk,fig:standard_deviations_he} for the Green--Kubo and half-Einstein estimators, respectively.
These figures confirm the trends already observed in~\cref{table:runtime_multiscale_gk,table:runtime_multiscale_he}.
In particular, whether for Green--Kubo or half-Einstein estimators,
combining both control variates as in~\eqref{eq:GK_cv_both_estimator} and \eqref{eq:HE_cv_both_estimator} yields the largest variance reduction.
The figures also illustrate that the variance reduction obtained with the forward or adjoint control variates are similar.
\begin{figure}[ht]
    \centering
    \includegraphics[width=0.49\linewidth]{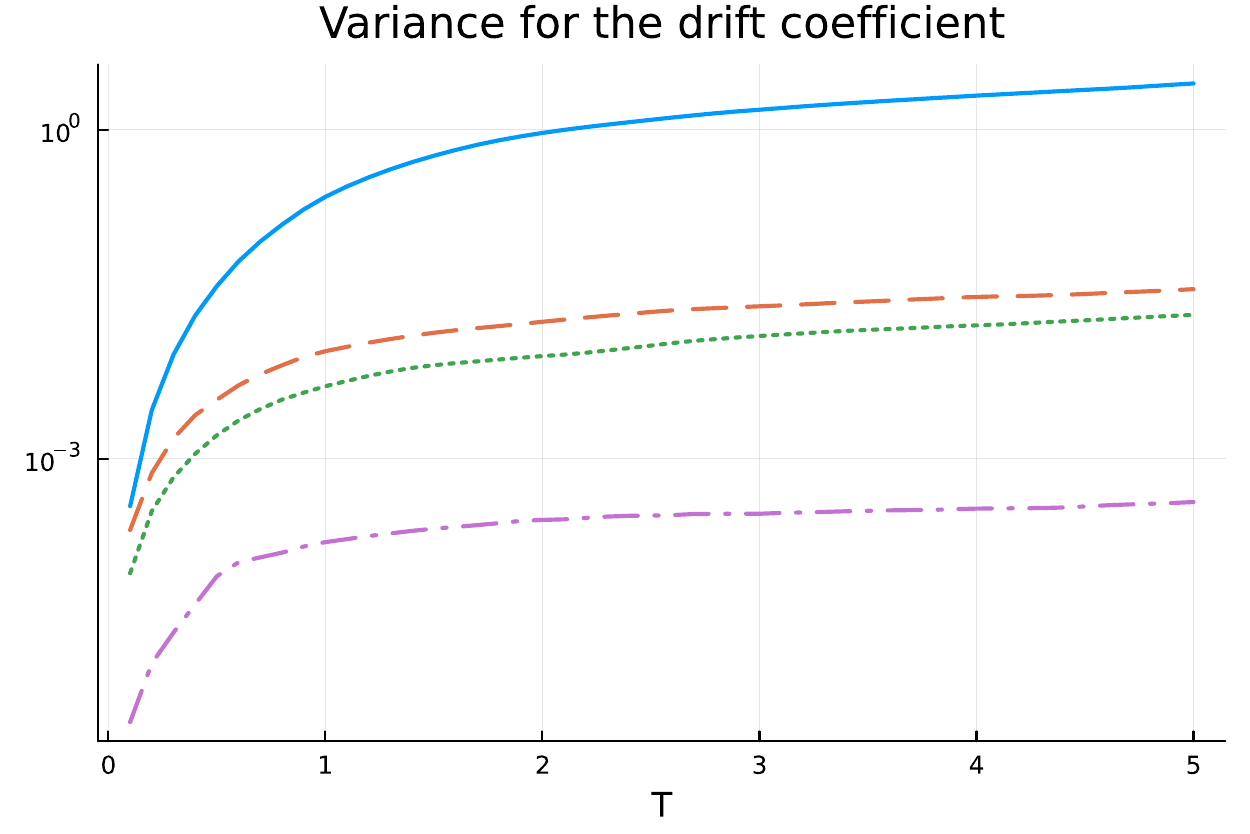}
    \includegraphics[width=0.49\linewidth]{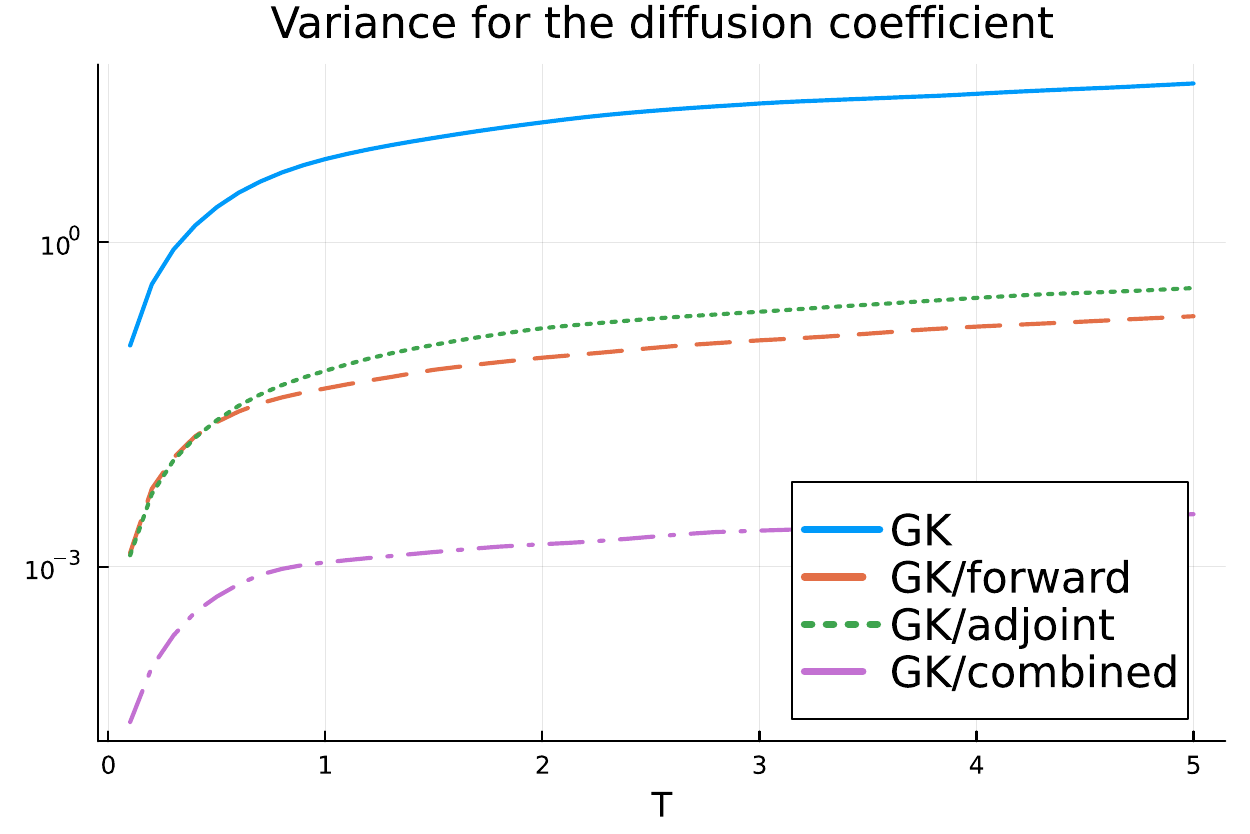}
    \caption{Variances of the \textbf{Green--Kubo} estimators presented in~\cref{sec:cv_methodology},
        when these are used to calculate the coefficients of the homogenized equation associated to~\eqref{eq:multiscale}.
        These values were calculated based on $K = 1000$ realizations of each estimator.
    }\label{fig:standard_deviations_gk}
\end{figure}

\begin{figure}[ht]
    \centering
    \includegraphics[width=0.49\linewidth]{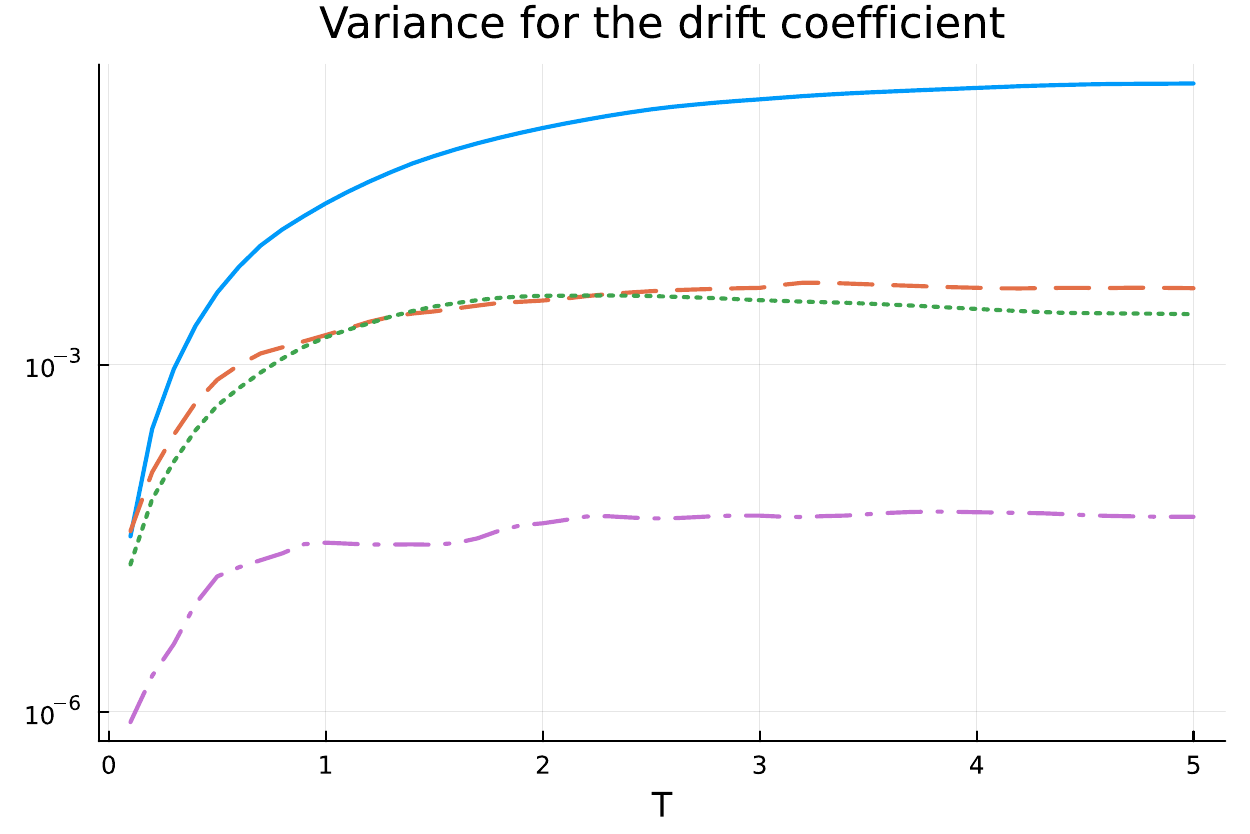}
    \includegraphics[width=0.49\linewidth]{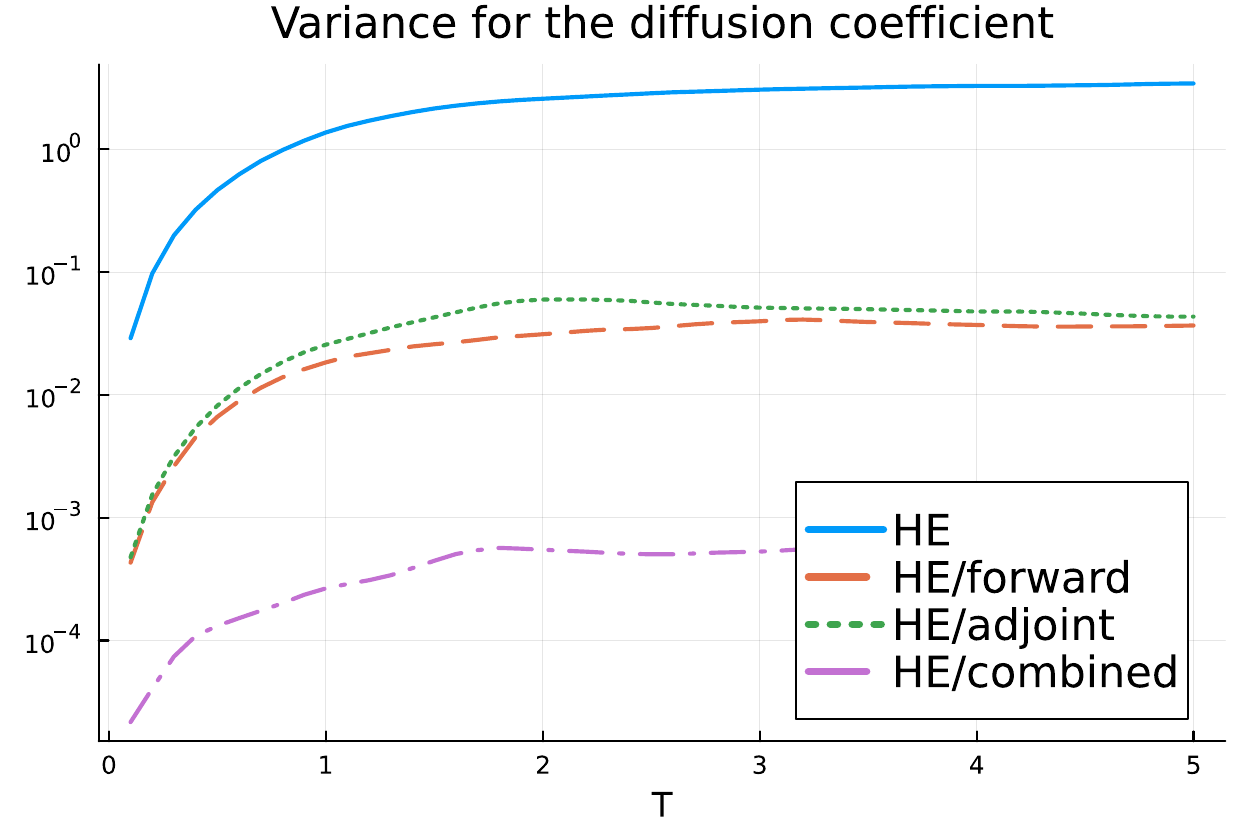}
    \caption{Variances of the \textbf{half-Einstein} estimators presented in~\cref{sec:cv_methodology},
        when these are used to calculate the coefficients of the homogenized equation associated to~\eqref{eq:multiscale}.
        These values were calculated based on $K = 1000$ realizations of each estimator.
    }\label{fig:standard_deviations_he}
\end{figure}

\section{Extensions and perspectives}
\label{sec:conclusion_pinns}
In this paper we demonstrated that neural networks can be an effective tool for constructing control variates via approximating Poisson equations used in the calculation of transport coefficients. The main point of our approach was to construct simple, easy-to-train and inexpensive-to-evaluate networks to be used as control variates, evaluated alongside Monte Carlo simulations. The key observation is that the network must be simple enough so that it is reasonably inexpensive to train and evaluate, while still providing significant variance reduction in order justify the additional computational cost. Clearly, in order for the proposed methodology to be of practical use, the cost (i.e., product of runtime and variance) must be less than that of the standard estimator.

There are several directions for extending the work. While possible, high-order nested automatic differentiation remains a nontrivial challenge, and is generally not an off-the-shelf tool, particularly for high-dimensional systems requiring batched training. Another possible direction is to further optimize the network's cost-to-variance reduction ratio, in particular by further exploring featurization approaches, which can potentially increase the network's efficiency without significantly increasing cost. Lastly, another possible extension of this method is for computing the mobility in the underdamped limit $\gamma\to 0$, particularly in the case of nonseparable potentials, which remains a challenge in the literature despite recent efforts \cite{pavliotis2023}.

\appendix
\section{Extension to general weights}
\label{appendix:generalizations_HE_var}
We state here some technical results which generalize the results of \cref{prop:var_standard_HE}. We first state an assumption which will be used in the results to come.

\begin{assumption}[Decay estimates of the semigroup in $L^4(\mu)$]
\label{as:semigroup_decay_L4}
There exist positive constants~$L_4$ and~$\lambda_4$ such that, for all $\varphi\in L^4_0(\mu)$ and all $t\geq 0$,
\begin{equation}
    \notag
    \norm*{\e^{t\L}\varphi}_{L^4_0(\mu)} \leq L_4\e^{-\lambda_4 t}\norm*{\varphi}_{L^4_0(\mu)}.
\end{equation}
\end{assumption}

\cref{as:semigroup_decay_L4} extends the semigroup decay estimates of \cref{as:semigroup_decay} from $L^2(\mu)$ to $L^4(\mu)$, a necessary estimate in \cref{lemma:regularity_asym_var} below.
By \cite[Theorem 2.2]{kusuoka2014},
see also~\cite[Theorem~1.3]{cattiaux2010}
and the discussion in \cite[Section~2.6]{pavliotis2014},
the exponential convergence estimate in~$L^2_0(\mu)$ given in~\cref{as:semigroup_decay}
implies similar exponential convergence estimates in $L^p_0(\mu)$,
for all $p \in (1, \infty)$ by an interpolation argument.
The same assumption is made on $\L^*$, and can be shown to hold using the same interpolation argument as above.

\begin{lemma}[Continuity estimate]
\label{lemma:regularity_asym_var}
Suppose that \cref{as:semigroup_decay_L4} holds true, and assume that $w$ is continuous at $0$.
Assume additionally that $f, g \in L^4_0(\mu)$.
Then, it holds that\begin{equation}
    \label{eq:statement}
\limsup_{T\to+\infty} \E\bkt*{\abs*{\HEest}^2} \leq \frac{L^2_4\norm{f}^2_{L^4(\mu)}\norm{g}^2_{L^4(\mu)}}{K}\paren*{\frac{\abs{w(0)}^2}{\lambda^2_4} + \frac{2}{\lambda_4}\int_{0}^1 w(u)^2 (1 - u) \, du}.
\end{equation}
\end{lemma}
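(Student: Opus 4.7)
The strategy is to reduce $\E\bkt*{|\HEest|^2}$ to moments of the one-realization estimator
\[
Z_1 \;=\; \frac{1}{T}\int_0^T\!\!\int_0^t w\paren*{\frac{t-s}{T}} f(X_s)g(X_t)\, ds\, dt,
\]
and to control these moments via the $L^4(\mu)$-decay estimate of \cref{as:semigroup_decay_L4}, since the stronger integrability assumptions ($F,G,\chi_F,\chi_G \in L^4(\mu)$) underlying the sharp limit in \cref{prop:var_standard_HE} are not available here. By independence of the $K$ replicas,
\[
\E\bkt*{|\HEest|^2} \;=\; \tfrac{1}{K}\E[Z_1^2] + \tfrac{K-1}{K}(\E[Z_1])^2,
\]
so it suffices to bound $\E[Z_1]$ and $\E[Z_1^2]$ in the limit $T\to+\infty$.

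For the mean, following the lines of the proof of \cref{prop:bias_gen_standard_HE}, stationarity and a change of variables give
\[
\E[Z_1] \;=\; \int_0^T w(\theta/T)(1-\theta/T)\ip{f, \e^{\theta\L}g}\, d\theta.
\]
H\"older's inequality with conjugate exponents $4/3$ and $4$ (using the embedding $L^4(\mu)\hookrightarrow L^{4/3}(\mu)$, valid since $\mu$ is a probability measure) combined with \cref{as:semigroup_decay_L4} yields $|\ip{f,\e^{\theta\L}g}| \le L_4\e^{-\lambda_4\theta}\norm{f}_{L^4}\norm{g}_{L^4}$. A dominated convergence argument, with dominating function $L_4\norm{w}_{C^0}\norm{f}_{L^4}\norm{g}_{L^4}\e^{-\lambda_4\theta}$ and using continuity of $w$ at $0$ for the pointwise limit, then gives $\limsup_{T\to\infty}|\E[Z_1]| \le L_4|w(0)|\norm{f}_{L^4}\norm{g}_{L^4}/\lambda_4$.

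The heart of the proof is the bound on $\E[Z_1^2]$. This quantity expands as a quadruple time integral of $w((t_1-s_1)/T)w((t_2-s_2)/T)\E[f(X_{s_1})g(X_{t_1})f(X_{s_2})g(X_{t_2})]$. Using the symmetry $(s_1,t_1)\leftrightarrow(s_2,t_2)$, I would restrict to $t_1 \le t_2$ at the cost of a factor $2$, then perform a case analysis on the ordering of $\{s_1,s_2,t_1\}$. In each subregion, the Markov property rewrites the four-point correlation as an iterated action of the semigroup, yielding an expression of the form $\ip{h_1, \e^{\tau_1\L}(h_2\cdot \e^{\tau_2\L}(\ldots))}$ with $h_i \in \{f, g\}$. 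Applying H\"older in $L^4$--$L^{4/3}$ together with \cref{as:semigroup_decay_L4} iteratively produces a factor $L_4\e^{-\lambda_4\tau_i}$ per nested semigroup, with the remaining norms evaluating to $\norm{f}_{L^4}$ or $\norm{g}_{L^4}$. After rescaling $u_i = \theta_i/T$ and applying dominated convergence (using again only continuity of $w$ at $0$), the configurations with one time gap of order $T$ yield a $|w(0)|^2/\lambda_4^2$ contribution, while those in which two $w$-factors pair against a single exponential decay produce $\tfrac{2}{\lambda_4}\int_0^1 w(u)^2(1-u)\, du$. Substituting the resulting bounds for $(\E[Z_1])^2$ and $\E[Z_1^2]$ into the identity from the first paragraph and passing to $\limsup$ as $T \to +\infty$ yields \eqref{eq:statement}.

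The main obstacle is the combinatorial bookkeeping in the four-point case analysis and the need for uniform-in-$T$ control so that a single dominated convergence argument handles all subregions simultaneously. Because $w$ is assumed only continuous at $0$, uniform convergence on $[0,1]$ is unavailable, and the exponential decay from \cref{as:semigroup_decay_L4} must play the role of the dominating mechanism throughout.
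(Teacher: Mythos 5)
Your treatment of the one-realization second moment $\E[Z_1^2]$ is essentially the paper's argument: expansion into a quadruple time integral, rewriting the four-point correlation via iterated Markov conditioning (for ordered times $a\le b\le c\le d$, as $\langle h_2\,\e^{(b-a)\L^*}h_1,\;\e^{(c-b)\L}\bigl(h_3\,\e^{(d-c)\L}h_4\bigr)\rangle$), applying H\"older together with \cref{as:semigroup_decay_L4} to extract two exponential decay factors, and then splitting the six orderings into the two groups that yield the $\frac{2}{\lambda_4}\int_0^1 w(u)^2(1-u)\,du$ contribution (the paper's $A_1(T)$, controlled by Cauchy--Schwarz on $w$) and the $|w(0)|^2/\lambda_4^2$ contribution (the paper's $A_2(T)$).

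The explicit $K$-bookkeeping, however, does not close. From your own bounds, $\limsup_{T\to\infty}(\E[Z_1])^2\le L_4^2\norm{f}_{L^4(\mu)}^2\norm{g}_{L^4(\mu)}^2\,|w(0)|^2/\lambda_4^2$ and $\limsup_{T\to\infty}\E[Z_1^2]\le L_4^2\norm{f}_{L^4(\mu)}^2\norm{g}_{L^4(\mu)}^2\paren*{|w(0)|^2/\lambda_4^2+\frac{2}{\lambda_4}\int_0^1 w^2(1-u)\,du}$. Substituting these into $\E\bigl[|\HEest|^2\bigr]=\frac{1}{K}\E[Z_1^2]+\frac{K-1}{K}(\E[Z_1])^2$ gives
\begin{equation*}
\limsup_{T\to\infty}\E\bigl[|\HEest|^2\bigr]\le L_4^2\norm{f}_{L^4(\mu)}^2\norm{g}_{L^4(\mu)}^2\paren*{\frac{|w(0)|^2}{\lambda_4^2}+\frac{2}{K\lambda_4}\int_0^1 w(u)^2(1-u)\,du},
\end{equation*}
which is \emph{not} \eqref{eq:statement}: the $|w(0)|^2/\lambda_4^2$ term carries no $1/K$ prefactor, because the cross term $\frac{K-1}{K}(\E[Z_1])^2$ does not shrink with $K$. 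The paper's proof never passes through this identity; it reduces to $K=1$ at the outset (as in \cref{prop:var_standard_HE}) and therefore only establishes the $K=1$ bound, which is all that is used in \cref{corollary:generalization}. Your decomposition in fact exposes that \eqref{eq:statement} as written cannot hold for large $K$ when $\E[Z_1]$ has a nonzero limit; in the Ornstein--Uhlenbeck example of \cref{appendix:weight_num_ill}, $\E[\HEest]\to\rho=1$ for every $K$, so the second moment is bounded below by a constant while the right-hand side of \eqref{eq:statement} decays as $1/K$. To make your proposal correct you should either restrict to $K=1$, or weaken the claim to the displayed inequality above. As a secondary point, your dominated-convergence argument for $\E[Z_1]$ uses $\norm{w}_{C^0}$, but the hypothesis is only continuity of $w$ at $0$, so $w$ may be unbounded; one must first dispatch the case $\int_0^1 w^2(1-u)\,du=\infty$ (where the inequality is trivial), and otherwise replace the sup bound by a Cauchy--Schwarz estimate on $w$, as the paper does for $A_1(T)$.
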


\begin{remark}
    In contrast with the result of~\cref{prop:var_standard_HE}, in~\cref{lemma:regularity_asym_var} we make no regularity assumption on the derivatives of the weight function~$w$.
\end{remark}

\begin{proof}
As in the proof of \cref{prop:var_standard_HE},
we consider  $K=1$ and use the simplified notation $\HEvar$ for the estimator.

If the right-hand side of~\eqref{eq:statement} is infinite, then the inequality is satisfied,
so we assume from now on that this term is finite. We first rewrite, for $\HEvar$ defined in \eqref{eq:estimator},
\begin{equation}
    \label{eq:rewrite_var_estimator}
    \E\bkt*{\abs*{\HEvar}^2}
    = T^2 \int_{0}^{1} \!\! \int_{0}^{1} \!\! \int_{0}^{\tau} \!\! \int_{0}^{t} w(t-s)w(\tau - \varsigma)
    \E \Bigl[ f(X_{sT}) f(X_{\varsigma T}) g(X_{t T}) g(X_{\tau T}) \Bigr] \, ds \, d\varsigma \, dt \, d\tau.
\end{equation}
Fix $0 \leq a \leq b \leq c \leq d \leq T$,
as well as bounded functions~$h_1, h_2, h_3, h_4 \in L^4_0(\mu)$.
Conditioning successively on information at times $c$, $b$ and then $a$,
we obtain that
\begin{align*}
    \expect \bigl[ h_1(X_a) h_2(X_b) h_3(X_c) h_4(X_d) \bigr]
    &= \expect \left[ h_1(X_a) h_2(X_b) \left( h_3 \e^{(d-c) \mathcal L}h_4 \right) (X_c) \right] \\
    &= \expect \left[ h_1(X_a) \left( h_2 \e^{(c-b)\mathcal L}  \left( h_3 \e^{(d-c) \mathcal L}h_4 \right)  \right) (X_b) \right] \\
    &= \expect \left[ h_1(X_a) \e^{(b-a) \mathcal L} \left( h_2 \e^{(c-b)\mathcal L}  \left( h_3 \e^{(d-c) \mathcal L}h_4 \right)  \right) (X_a) \right] \\
    &= \left \langle h_1, \e^{(b-a) \mathcal L} \left( h_2 \e^{(c-b)\mathcal L}  \left( h_3 \e^{(d-c) \mathcal L}h_4 \right)  \right) \right \rangle\\
&= \left \langle h_2\e^{(b-a)\mathcal L^*} h_1, \e^{(c-b)\mathcal L}  \left( h_3 \e^{(d-c) \mathcal L}h_4 \right)   \right \rangle \\
&= \expect \left[(h_2\e^{(b-a) \mathcal L^{*}} h_1)(X_b)   \left( h_3 \e^{(d-c) \mathcal L}h_4 \right) (X_c) \right].
\end{align*}
It follows by~\cref{as:semigroup_decay_L4} that
\begin{align}
    \Bigl\lvert \E\bigl[ h_1(X_a) h_2(X_b) h_3(X_c) h_4(X_d) \bigr] \Bigr\rvert
    \notag
    &\leq \norm*{h_2\e^{(b-a)\L^*} h_1}\norm*{h_3\e^{(d-c)\L}h_4} \\
    \notag
    &\leq \norm*{h_2}_{L^4(\mu)}\norm*{h_3}_{L^4(\mu)}\norm*{\e^{(b-a)\L^*} h_1}_{L^4(\mu)}\norm*{\e^{(d-c)\L}h_4}_{L^4(\mu)} \\
    \label{eq:estimate_decay}
    &\leq L^2_4\norm{h_1}_{L^4(\mu)}\norm{h_2}_{L^4(\mu)}\norm{h_3}_{L^4(\mu)}\norm{h_4}_{L^4(\mu)}\e^{-\lambda_4(b-a)}\e^{-\lambda_4(d-c)}.
\end{align}
We shall use this estimate with times $t\geq s$ and $\tau\geq \varsigma$.
There are six possible orderings of these times,
which we group in two cases:
\[
    \text{ Case 1 }:
    \begin{cases}
        s \leq \varsigma \leq \tau \leq t, \\
        s \leq \varsigma \leq t \leq \tau, \\
        \varsigma \leq s \leq \tau \leq t, \\
        \varsigma \leq s \leq t \leq \tau.
    \end{cases}
    \qquad
    \text{ Case 2 }:
    \begin{cases}
        s \leq t \leq \varsigma \leq \tau, \\
        \varsigma \leq \tau \leq s \leq t.
    \end{cases}
\]
Using~\eqref{eq:estimate_decay},
we obtain the following estimate,
where the arguments of the maximum correspond to the two cases above:
\begin{align}
    \notag
    &\E\Bigl[ f(X_{sT}) f(X_{\varsigma T}) g(X_{tT}) g(X_{\tau T}) \Bigr] \\
    \label{eq:decay_estimate_2}
    &\qquad \leq L^2_4\norm{f}^2_{L^4(\mu)}\norm{g}^2_{L^4(\mu)} \max \Bigl\{ \e^{-\lambda_4 T|s- \varsigma| - \lambda_4 T |t-\tau|} , \e^{-\lambda_4 T|t-s| - \lambda_4 T |\tau-\varsigma|} \Bigr\},
\end{align}
which by substitution in~\eqref{eq:rewrite_var_estimator} leads to the inequality
\begin{equation}
    \notag
    \E\bkt*{\abs*{\HEvar}^2}
    \leq L^2_4\norm{f}^2_{L^4(\mu)}\norm{g}^2_{L^4(\mu)} \Bigl( A_1(T) + A_2(T) \Bigr),
\end{equation}
with $A_1$ and $A_2$ given by
\begin{align*}
    A_1(T) &= T^2\int_{0}^{1} \!\! \int_{0}^{1} \!\! \int_{0}^{\tau} \!\! \int_{0}^{t}
            w(t-s)w(\tau - \varsigma)
             \e^{-\lambda_4 T|s- \varsigma| - \lambda_4 T |t-\tau|}\, \d s \, \d \varsigma \, dt \, d\tau, \\
    A_2(T) &= T^2 \int_{0}^{1} \!\! \int_{0}^{1} \!\! \int_{0}^{\tau} \!\! \int_{0}^{t}
            w(t-s)w(\tau - \varsigma)
            \e^{-\lambda_4 T|t-s| - \lambda_4 T |\tau-\varsigma|}
            \, ds \, d\varsigma \, dt \, d\tau.
\end{align*}
The $A_1(T)$ term can be written as
\begin{equation*}
    A_1(T) = T \int_{0}^{1} \!\! \int_{0}^{1}
    \e^{- \lambda T |t-\tau|}
    f_T(t, \tau) \, dt \, d\tau,
    \qquad f_T(t, \tau) :=
    T \int_{0}^{\tau} \!\! \int_{0}^{t}
    w(t-s)w(\tau - \varsigma) \e^{-\lambda T|s- \varsigma|}
    \, ds \, d\varsigma.
\end{equation*}
By the Cauchy--Schwarz inequality, the term $f_T(t, \tau)$ can be bounded as follows:
\begin{align*}
    \bigl\lvert f_T(t, \tau) \bigr\rvert^2 &\leq \paren*{T \int_{0}^{\tau} \!\! \int_{0}^{t}
    \bigl\lvert w(t-s) \bigr\rvert^2 \e^{-\lambda T|s- \varsigma|}
    \, \d s \, \d \varsigma} \paren*{T \int_{0}^{\tau} \!\! \int_{0}^{t}
    \bigl\lvert w(\tau - \varsigma) \bigr\rvert^2 \e^{-\lambda T|s- \varsigma|}
    \, \d s \, \d \varsigma} \\
    &\leq
    \paren*{\frac{2}{\lambda} \int_{0}^{t}
    \bigl\lvert w(t-s) \bigr\rvert^2 \, \d s}\paren*{\frac{2}{\lambda} \int_{0}^{\tau}
    \bigl\lvert w(\tau - \varsigma) \bigr\rvert^2 \, \d \varsigma} \\
    &=
    \paren*{\frac{2}{\lambda}}^2
    \lVert w \rVert_{L^2([0, t])}^2 \lVert w \rVert_{L^2([0, \tau])}^2,
\end{align*}
where we used that
\begin{align*}
    T\int_0^\tau \e^{-\lambda T|s- \varsigma|} \, d\varsigma &= T\paren*{\int_0^s \e^{-\lambda T(s- \varsigma)} \, d\varsigma + \int_s^\tau \e^{-\lambda T(\varsigma - s)} \, d\varsigma} \\
    &= \frac{1 - \e^{-\lambda Ts}}{\lambda} + \frac{1 - \e^{-\lambda T(\tau - s)}}{\lambda} \leq \frac{2}{\lambda}.
\end{align*}
Thus, we obtain
\begin{align*}
    A_1(T)
    &\leq \frac{2}{\lambda} \int_{0}^1 \int_{0}^1 \lVert w \rVert_{L^2([0, t])} \lVert w \rVert_{L^2([0, \tau])} \, \d t \, \d \tau \\
    &= \frac{2}{\lambda} \left( \int_{0}^1 \lVert w \rVert_{L^2([0, t])} \, \d t \right)^2
    \leq \frac{2}{\lambda} \int_{0}^1 \lVert w \rVert_{L^2([0, t])}^2 \, \d t
    = \frac{2}{\lambda} \int_{0}^1 w(u)^2 (1 - u) \, \d u.
\end{align*}
For the second term $A_2(T)$, it is simple to show that (see \cref{rem:consistency_w})
\begin{equation}
    \notag
    A_2(T) = T^2\paren*{\int_{0}^{1} \int_0^t w(t-s) \, \e^{-\lambda T|t-s|} \, ds \, dt}^2 \xrightarrow[T\to\infty]{} \frac{w(0)^2}{\lambda^2},
\end{equation}
which concludes the proof.
\end{proof}

Using~\cref{lemma:regularity_asym_var},
we can use a density argument to generalize \cref{prop:var_standard_HE} to any weight function satisfying the assumptions of this lemma, with the additional requirement that the right-hand side of~\eqref{eq:statement} is finite.
This is the content of the following result.

\begin{corollary}
%    [Generalization]
    \label{corollary:generalization}
    Suppose that \cref{as:semigroup_decay_L4} holds true.
    Assume that $w\colon [0, 1) \to \real$ is continuous at $0$ and that
    \begin{equation}
        \label{eq:assumption_corollary}
        \zeta_w = \int_{0}^1 w(u)^2 (1 - u) \, \d u < +\infty.
    \end{equation}
    Consider~$f,g \in L^4_0(\mu)$ such that $F=-\L^{-1}f$ and $G=-\L^{-1}g$ belong to $L^4(\mu)$,
    and assume in addition that $\chi_F,\chi_G \in L^4(\mu)$.
    Then,
    \begin{equation}
        \label{eq:statement_corollary}
        \lim_{T\to+\infty} \var\paren*{\HEest} = \frac{4\zeta_w}{K} \ip{f, -\L^{-1}f}\ip{g, -\L^{-1}g}.
    \end{equation}
\end{corollary}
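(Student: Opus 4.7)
The strategy is a density argument: approximate the weight $w$ by smoother weights $w_n$ for which \cref{prop:var_standard_HE} applies, and control the difference via \cref{lemma:regularity_asym_var}. As in the proof of \cref{prop:var_standard_HE}, I would first reduce to $K=1$, and for a generic weight $\widetilde w$ denote by $\widehat\rho_T^{(\widetilde w)}$ the corresponding half-Einstein estimator~\eqref{eq:HE_gen_estimator} with $K=1$. I would then construct a sequence $w_n \in C^2[0,1]$ satisfying $w_n(0) = w(0)$ and $\zeta_{w_n - w} \to 0$ as $n \to \infty$.

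The key observation is that the estimator depends \emph{linearly} on the weight, so that $\widehat\rho_T^{(w)} - \widehat\rho_T^{(w_n)} = \widehat\rho_T^{(w - w_n)}$. Combining Minkowski's inequality with $\Var(X) \leq \E[X^2]$ yields
\begin{equation*}
    \bigl\lvert \sqrt{\Var(\widehat\rho_T^{(w)})} - \sqrt{\Var(\widehat\rho_T^{(w_n)})} \bigr\rvert \leq \sqrt{\E\bkt*{\bigl\lvert\widehat\rho_T^{(w - w_n)}\bigr\rvert^2}}.
\end{equation*}
Applying \cref{lemma:regularity_asym_var} to the weight $w - w_n$, and using that $(w-w_n)(0) = 0$ by construction, gives
\begin{equation*}
    \limsup_{T \to \infty} \E\bkt*{\bigl\lvert\widehat\rho_T^{(w - w_n)}\bigr\rvert^2} \leq \frac{2 L_4^2 \norm{f}^2_{L^4(\mu)} \norm{g}^2_{L^4(\mu)}}{\lambda_4}\, \zeta_{w - w_n},
\end{equation*}
which vanishes as $n \to \infty$. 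For each fixed $n$, \cref{prop:var_standard_HE} applies to $w_n \in C^2[0,1]$ (a brief inspection of the proof reveals that the hypothesis $w(0)=1$ is needed only to identify the limit of $\rhoterm/T$ with $\rho$, so it affects the bias but not the variance: indeed $A_T/T = w(0)\cdot T^{-1}\int_0^T G(X_t)f(X_t)\,\d t$, whose variance vanishes as $T\to\infty$ irrespective of the value of $w(0)$), giving
\begin{equation*}
    \lim_{T \to \infty} \Var\bigl(\widehat\rho_T^{(w_n)}\bigr) = 4 \zeta_{w_n} \ip{f, -\L^{-1} f}\ip{g, -\L^{-1} g}.
\end{equation*}
Since $\zeta_{w_n} \to \zeta_w$ by the reverse triangle inequality for $\sqrt{\zeta_{\cdot}}$ and the finiteness of $\zeta_w$, the claim~\eqref{eq:statement_corollary} then follows from a standard $\varepsilon/3$ argument in which one first takes $T \to \infty$ and then $n \to \infty$.

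The main technical obstacle is the construction of the approximating sequence $w_n$ enjoying all three properties simultaneously: smoothness on $[0,1]$, exact preservation of the value at the origin, and weighted $L^2$-convergence to $w$. My plan would be a two-step construction: first mollify $w$ on intervals $[0, 1-1/n]$ (noting that $\zeta_w < +\infty$ forces $w \in L^2_{\rm loc}[0,1)$) and extend smoothly to $[0,1]$ to produce an intermediate $\widetilde w_n$ with $\zeta_{\widetilde w_n - w} \to 0$; then correct the boundary value by adding a smooth bump localized near $0$ with height $w(0) - \widetilde w_n(0)$ and arbitrarily small weighted $L^2$-norm. The continuity of $w$ at $0$ forces $\widetilde w_n(0) \to w(0)$, so the correction can be kept small in the weighted norm. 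A secondary, more cosmetic point is the verification (sketched above) that the $w(0)=1$ hypothesis in \cref{prop:var_standard_HE} is indeed inessential for the variance conclusion.
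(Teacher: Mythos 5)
Your proof is correct and follows essentially the same route as the paper: decompose $\widehat\rho_T^{(w)} = \widehat\rho_T^{(w_\varepsilon)} + \widehat\rho_T^{(w - w_\varepsilon)}$ with $w_\varepsilon$ a smooth weight close to $w$ in the $\zeta$-norm and satisfying $w_\varepsilon(0) = w(0)$, apply \cref{prop:var_standard_HE} to the smooth piece and \cref{lemma:regularity_asym_var} to the remainder, then let the approximation parameter vanish. Three minor differences are worth flagging.

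First, your observation that the hypothesis $w(0)=1$ in \cref{prop:var_standard_HE} is needed only for the bias (to identify $\E[\HEvar]\to\rho$) and not for the variance conclusion is correct and in fact supplies a justification that the paper's proof leaves implicit: the paper applies \cref{prop:var_standard_HE} to its approximant $w_\varepsilon$ even though $w_\varepsilon(0)=w(0)$ need not equal $1$. Making this explicit is a genuine improvement in rigor. Second, you control the error via the reverse Minkowski inequality for $L^2$-norms of centered variables, $\bigl\lvert\sqrt{\var X}-\sqrt{\var Y}\bigr\rvert\le\sqrt{\var(X-Y)}\le\sqrt{\E[(X-Y)^2]}$, whereas the paper expands $\var = \var + 2\cov + \var$ and bounds the cross term with Young's inequality; your route is marginally cleaner and avoids the auxiliary choice of Young parameter. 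Third, for the approximating sequence the paper has a slicker one-shot construction: set $\Omega(u) = (w(u)-w(0))\sqrt{1-u}\in L^2(0,1)$, pick $\Omega_\varepsilon\in C^\infty_{\rm c}(0,1)$ close to $\Omega$ in unweighted $L^2$, and set $w_\varepsilon = w(0) + \Omega_\varepsilon/\sqrt{1-u}$; then $w_\varepsilon\in C^\infty[0,1]$, $w_\varepsilon(0)=w(0)$ automatically (since $\Omega_\varepsilon$ vanishes near $0$), and $\zeta_{w-w_\varepsilon}=\|\Omega-\Omega_\varepsilon\|^2_{L^2}\le\varepsilon$. Your two-step mollify-then-correct construction also works, but note that straightforward mollification near $u=0$ requires you to say how $w$ is extended to negative arguments (or to use a one-sided mollifier) before you can even speak of $\widetilde w_n(0)$; the $\sqrt{1-u}$ trick sidesteps this entirely.
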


\begin{proof}
As in the proof of \cref{lemma:regularity_asym_var}, we consider the case $K=1$ and the simplified notation $\HEvar$.
Let
\[
    \Omega(u) = \bigl(w(u) - w(0)\bigr) \sqrt{1-u}
\]
and fix $\varepsilon > 0$. By assumption, it holds that $\Omega \in L^2(0, 1)$,
so by density in $L^2[0, 1]$ of~$C_{\rm c}^{\infty}(0, 1)$,
%the set of smooth functions with compact support in $(0, 1)$,
there exists a function~$\Omega_{\varepsilon} \in C_{\rm c}^{\infty}(0, 1)$ such that
\begin{equation*}
    \lVert \Omega_{\varepsilon} - \Omega \rVert_{L^2(0, 1)} \leq \sqrt{\varepsilon}.
\end{equation*}
Let $w_{\varepsilon}(u) = w(0) + \Omega_{\varepsilon}(u) / \sqrt{1-u}$. By construction, it holds that $w_{\varepsilon}(0) = w(0)$ and
\begin{equation}
    \label{eq:density_close_weight}
    \int_0^1 \bigl\lvert w(u) - w_{\varepsilon}(u) \bigr\rvert^2 (1 - u) \, du
    = \int_0^1 \bigl\lvert \Omega(u) - \Omega_{\varepsilon}(u) \bigr\rvert^2 \, du
    \leq \varepsilon.
\end{equation}
We decompose the estimator as follows:
\begin{align*}
    \HEvar &= \frac{1}{T} \int_{0}^{T} w_\varepsilon \paren*{\frac{u}{T}}\int_{0}^{T-u} f(X_s) g(X_{s+u}) \, ds \, du \\
           &\qquad + \frac{1}{T} \int_{0}^{T} (w - w_{\varepsilon}) \paren*{\frac{u}{T}}\int_{0}^{T-u} f(X_s) g(X_{s+u}) \, ds \, du
           := \widehat r_{T,\varepsilon} + \widehat \varrho_{T,\varepsilon}.
\end{align*}
Taking the variance and rearranging,
we deduce that
\begin{equation*}
    \var\paren*{\HEvar} - \var\paren*{\widehat r_{T, \varepsilon}}
    = \var\paren*{\widehat \varrho_{T,\varepsilon}} + 2 \cov \paren*{\widehat r_{T, \varepsilon}, \widehat \varrho_{T,\varepsilon}}.
\end{equation*}
Taking the absolute value, the limit superior as $T \to +\infty$,
and using Young's inequality, we obtain \begin{align}
    \notag
    \limsup_{T\to\infty} \Bigl\lvert \var\paren*{\HEvar} - \var\paren*{\widehat r_{T, \varepsilon}} \Bigr\rvert
    &= \limsup_{T\to\infty} \Bigl\lvert \var(\widehat \varrho_{T,\varepsilon}) + 2 \cov\bkt*{\widehat r_{T, \varepsilon}, \widehat\varrho_{T,\varepsilon}} \Bigr\rvert \\
    \label{eq:last_equation_complement}
    &\leq \limsup_{T\to\infty} \paren*{\var(\widehat \varrho_{T,\varepsilon}) + \sqrt{\varepsilon} \var(\widehat r_{T, \varepsilon}) + \frac{1}{\sqrt{\varepsilon}}\var(\widehat \varrho_{T,\varepsilon})}.
\end{align}
By~\cref{lemma:regularity_asym_var} and~\eqref{eq:density_close_weight},
and by~\cref{prop:var_standard_HE},
it holds that
\begin{align*}
    \limsup_{T \to +\infty} \var(\widehat \varrho_{T,\varepsilon})
    &\leq C_{f,g} \varepsilon, \\
    \lim_{T \to +\infty} \var(\widehat r_{T,\varepsilon})
    &= 4\zeta_{w_\varepsilon} \ip{f, -\L^{-1} f}\ip{g, -\L^{-1} g}.
\end{align*}
Using these equations in~\eqref{eq:last_equation_complement}
and taking the limit $\varepsilon \to 0$,
we deduce that
\[
    \lim_{\varepsilon\to0} \limsup_{T\to\infty} \abs*{\var\paren*{\HEvar} - 4\zeta_{w_\varepsilon} \ip{f, -\L^{-1} f}\ip{g, -\L^{-1} g}}
    = 0,
\]
which gives the desired estimate~\eqref{eq:statement_corollary}.
\end{proof}

It would be satisfying to show that when assumption~\eqref{eq:assumption_corollary} is not satisfied,
then the asymptotic value of the variance is infinite,
which would enable to further generalize \cref{corollary:generalization}.
Proving this in general is not simple, but we show the following result.

\begin{corollary}
%    [Further generalization]
    \label{corollary:unbounded_variance}
    Suppose that $f,g \in L^4_0(\mu)$ and that $w \colon [0,1) \to [0,+\infty)$ is continuous at 0
    and satisfies the condition
    \begin{equation}
        \label{eq:assumption}
        \mathscr{W} := \esssup_{\theta \in (0, 1]} \theta w(1 - \theta) < \infty.
    \end{equation}
    Then, it holds that
    \begin{equation}
        \label{eq:statement_corollary_general}
        \lim_{T\to\infty} \var\paren*{\HEest} = \frac{4\zeta_w}{K} \ip{f, -\L^{-1}f}\ip{g, -\L^{-1}g}.
    \end{equation}
    (Here the right-hand side is allowed to be infinite.)
\end{corollary}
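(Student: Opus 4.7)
The plan is to distinguish the cases $\zeta_w < +\infty$ and $\zeta_w = +\infty$. In the former, the conclusion follows directly from \cref{corollary:generalization} (the additional assumption $\mathscr W < +\infty$ is not needed, and the supplementary integrability of $F, G$ and $\chi_F, \chi_G$ is either implicit from hypercontractivity under \cref{as:semigroup_decay_L4} or may be added to the hypotheses without loss). The substantive work is therefore for $\zeta_w = +\infty$, where the right-hand side of~\eqref{eq:statement_corollary_general} equals $+\infty$ and the task is to prove $\liminf_{T \to \infty}\var(\HEest) = +\infty$.

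For this, I would approximate $w$ from below by the bounded truncations $w_n := w \wedge n$. Since $w$ is continuous at $0$ with $w(0) < +\infty$, for $n > w(0)$ the function $w_n$ is bounded, continuous at $0$ with $w_n(0) = w(0)$, and $\zeta_{w_n} < +\infty$; moreover $\zeta_{w_n} \nearrow \zeta_w = +\infty$ by monotone convergence. Write $\widehat r_{T,n}$ for the estimator~\eqref{eq:HE_gen_estimator} built from $w_n$, and $\widetilde \varrho_{T,n}$ for the residual estimator built from $\widetilde w_n := (w - n)^+ \geq 0$, so that $\HEest = \widehat r_{T,n} + \widetilde \varrho_{T,n}$. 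Applying \cref{corollary:generalization} to $w_n$ yields
\[
    \lim_{T \to \infty} \var(\widehat r_{T,n}) = \frac{4\zeta_{w_n}}{K}\ip{f, -\L^{-1}f}\ip{g, -\L^{-1}g},
\]
which tends to $+\infty$ as $n \to \infty$ provided that the two inner products are positive (the degenerate case where one vanishes forces $f \equiv 0$ or $g \equiv 0$ in $L^2_0(\mu)$ and trivializes both sides of~\eqref{eq:statement_corollary_general}). The reverse triangle inequality for the standard deviation then gives
\[
    \sqrt{\var(\HEest)} \geq \sqrt{\var(\widehat r_{T,n})} - \sqrt{\var(\widetilde \varrho_{T,n})},
\]
reducing the proof to controlling $\limsup_{T \to \infty}\var(\widetilde \varrho_{T,n})$ by a quantity that is, as $n \to \infty$, of smaller order than $\zeta_{w_n}$.

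This last step is the main obstacle. The key role of the assumption $\mathscr W < +\infty$ is to confine the support of $\widetilde w_n$ to $\{u \in [0,1) : w(u) > n\} \subseteq [1 - \mathscr W/n, 1)$ with the pointwise estimate $\widetilde w_n(u) \leq \mathscr W/(1-u)$ on that set. A direct application of \cref{lemma:regularity_asym_var} to $\widetilde w_n$ would produce a bound proportional to $\int_0^1 \widetilde w_n(u)^2(1-u) \, du$, which can be $+\infty$ since $\mathscr W^2/(1-u)$ fails to be integrable on $(0,1)$, and is therefore insufficient. Instead, one should revisit the estimate for the $A_1(T)$ term in the proof of \cref{lemma:regularity_asym_var}, retaining the full product $\widetilde w_n(t-s)\widetilde w_n(\tau-\varsigma)$ together with the exponential decorrelation factors $\e^{-\lambda T|s-\varsigma|}\e^{-\lambda T|t-\tau|}$ rather than splitting them via Cauchy--Schwarz, and use the pointwise bound $\widetilde w_n(u) \leq \mathscr W/(1-u)$ directly. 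The expected outcome is a bound on $\limsup_T \var(\widetilde \varrho_{T,n})$ growing only polynomially (likely logarithmically) in $n$, hence dominated by $\zeta_{w_n}$, which would conclude the proof upon letting first $T \to \infty$ and then $n \to \infty$.
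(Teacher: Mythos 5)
Your reduction of the finite-$\zeta_w$ case to \cref{corollary:generalization} agrees with the paper, and truncating $w$ from below to produce a family of approximating weights with finite $\zeta$-norm is a sensible instinct. The difficulty lies entirely in your treatment of the infinite case, and there the approach as proposed does not close.

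The core issue is that the reverse triangle inequality
\[
    \sqrt{\var(\HEest)} \geq \sqrt{\var(\widehat r_{T,n})} - \sqrt{\var(\widetilde\varrho_{T,n})}
\]
is vacuous unless $\limsup_{T\to\infty}\var(\widetilde\varrho_{T,n})$ is finite, and for your decomposition it is not. With $\widetilde w_n = (w-n)^+$, the residual weight lives on $\{w > n\} \subset [1 - \mathscr W/n, 1)$ and behaves like $\mathscr W/(1-u)$ there; tracing the $A_1$-estimate with the exponential decorrelation kept intact (rather than split by Cauchy--Schwarz, exactly as you propose) leads, after the $s,\varsigma$ integration has consumed the available factor of $T^{-1}$, to a quantity of order $\int_0^1 \widetilde w_n(v)\,dv$, which diverges logarithmically for $w(u)\sim 1/(1-u)$. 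This is not merely an artifact of a crude bound: since $\var(\widehat r_{T,n})$ is uniformly bounded in $T$ (by \cref{prop:var_standard_HE} applied to the bounded weight $w_n$) while $\var(\HEest)$ must diverge, the forward triangle inequality forces $\var(\widetilde\varrho_{T,n})$ to be unbounded in $T$. So no refinement of the $A_1$ estimate can yield the bound you need; the decomposition itself is the problem.

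The paper's proof avoids this by choosing a decomposition with \emph{disjoint supports}, $w = w_\varepsilon + \omega_\varepsilon$ with $w_\varepsilon = \mathbf 1_{[0,1-\varepsilon]}w$ and $\omega_\varepsilon = \mathbf 1_{(1-\varepsilon,1]}w$, and — crucially — by not using the reverse triangle inequality at all. Instead it starts from the exact identity $\E[\widehat\rho_T^2] = \E[\widehat r_{T,\varepsilon}^2] + \E[\widehat\varrho_{T,\varepsilon}^2] + 2\E[\widehat r_{T,\varepsilon}\widehat\varrho_{T,\varepsilon}]$ and simply drops the nonnegative term $\E[\widehat\varrho_{T,\varepsilon}^2]$, so that only a lower bound on $\E[\widehat r_{T,\varepsilon}^2]$ (which diverges as $\varepsilon\to 0$) and an upper bound on the cross term are needed. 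The cross term is then controllable precisely because, after the substitution $u\mapsto 1-u$, the two weights are supported on the separated intervals $[\varepsilon,1]$ and $[0,\varepsilon]$; combined with the bound $\theta\,w(1-\theta)\leq\mathscr W$, the four-point decorrelation estimate gives a uniform-in-$T$, uniform-in-$\varepsilon$ bound $C\mathscr W^2/\lambda^2$ on the cross term. You would need to replace your truncation decomposition with this support decomposition, and replace the reverse triangle inequality with the one-sided drop of the residual quadratic, for the argument to go through.
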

\begin{remark}
    The condition~\eqref{eq:assumption} expresses that $w$ does not diverge faster than $(1-\theta)^{-1}$ at~$\theta = 1$.
    The weight function $w(\theta) = (1 - \theta)^{-1}$,
    for example, satisfies this condition.
\end{remark}

\begin{proof}
    For conciseness,
    let $\widehat \rho_T = \HEest$.
    If the right-hand side of \eqref{eq:statement_corollary_general} is finite, then the inequality follows from \cref{corollary:generalization}, so we assume from now on that
\begin{equation*}
        \int_{0}^1 w(u)^2 (1 - u) \, \d u = \infty.
    \end{equation*}
For $\varepsilon \in (0, 1]$
    consider the following decomposition of the weight:
\begin{equation*}
        w = \mathbf 1_{[0, 1 - \varepsilon]} w + \mathbf 1_{[1 - \varepsilon, 1]} w
        =: w_{\varepsilon} + \omega_{\varepsilon}.
    \end{equation*}
We decompose the estimator as follows:
\begin{align*}
        \widehat \rho_T
    &= \frac{1}{T} \int_{0}^{T} w_\varepsilon \paren*{\frac{u}{T}}\int_{0}^{T-u} f(X_s) g(X_{s+u}) \, \d s \, \d u \\
    &\qquad + \frac{1}{T} \int_{0}^{T} \omega_\varepsilon \paren*{\frac{u}{T}} \int_{0}^{T-u} f(X_s) g(X_{s+u}) \, \d s \, \d u
    =: \widehat r_{T,\varepsilon} + \widehat \varrho_{T,\varepsilon}.
    \end{align*}
The proof of~\eqref{eq:statement_corollary_general} is based on the following simple inequality:
\begin{equation}
        \label{eq:covariance}
        \expect\bigl[ \widehat \rho_{T} ^2\bigr]
        = \expect\bigl[ \widehat r_{T,\varepsilon} ^2\bigr]
        + \expect\bigl[ \widehat \varrho_{T,\varepsilon} ^2\bigr]
        + 2 \expect\bigl[ \widehat r_{T,\varepsilon} \widehat \varrho_{T,\varepsilon} \bigr]
        \geq
        \expect\bigl[ \widehat r_{T,\varepsilon} ^2\bigr]
        + 2 \expect\bigl[ \widehat r_{T,\varepsilon} \widehat \varrho_{T,\varepsilon} \bigr],
    \end{equation}
so that
\begin{equation}
        \Var(\HEvar) \geq \expect\bigl[ \widehat r_{T,\varepsilon} ^2\bigr]
        + 2 \expect\bigl[ \widehat r_{T,\varepsilon} \widehat \varrho_{T,\varepsilon} \bigr] - \E[\HEvar]^2.
        \label{eq:var_ineq_decomp}
    \end{equation}
It follows from~\eqref{eq:statement} that
\begin{equation*}
        \lim_{T\to+\infty} \E\bigl[ \widehat r_{T,\varepsilon} ^2\bigr]
        =
        4\ip{f, -\L^{-1} f}
        \ip{g, -\L^{-1} g}
        \int_{0}^{1} \bigl\lvert w_\varepsilon(\theta) \bigr\rvert^2 (1 - \theta) \, \d \theta.
    \end{equation*}
This term diverges as $\varepsilon \to 0$.
    It remains to analyze the last term on the right-hand side of~\eqref{eq:covariance}.
    Using Fubini's theorem and a change of variable, we have
\begin{align*}
        \E\bigl[ \widehat r_{T,\varepsilon} \widehat \varrho_{T,\varepsilon} \bigr]
        &=
        T^2 \int_{0}^{1} \!\! \int_{0}^{1}
        w_\varepsilon \left(1 - u \right) \omega_\varepsilon \left(1 - v\right)
        \int_{0}^{u} \!\!  \int_{0}^{\upsilon} E(s, \varsigma, u, \upsilon) \, \d s \, \d \varsigma \, \d u \, \d \upsilon,
    \end{align*}
where $E(s, \varsigma, u, \upsilon) = \expect \bigl[ f(X_{sT}) f(X_{\varsigma T}) g(X_{sT+(1-u)T}) g(X_{\varsigma T+(1-\upsilon)T}) \bigr]$.
    It follows from~\eqref{eq:decay_estimate_2}, with~$t = s + 1 - u$ and $\tau = \varsigma + 1 - \upsilon$, that
\begin{equation*}
        E(s, t, u, v)
        \leq  C \max \Bigl\{ \e^{-\lambda T|s- \varsigma| - \lambda T |s - \varsigma + \upsilon - u|}, \e^{-\lambda T|1-u| - \lambda T |1-\upsilon|} \Bigr\}.
    \end{equation*}
By the triangle inequality,
    it holds that $|s - \varsigma| + |s - \varsigma + \upsilon - u| \geq |\upsilon - u|$
    and $|1-u| + |1-\upsilon| \geq |u-\upsilon|$,
    so we deduce that
\begin{equation*}
        \forall (s, \varsigma, u, \upsilon) \in [0, 1]^4, \qquad
        \forall (s, \varsigma, u, \upsilon) \in [0, 1]^4, \qquad
        E(s, \varsigma, u, \upsilon) \leq C \e^{-\lambda|u-v|T}.
\end{equation*}
Therefore, using this inequality,
the definition of $w_{\varepsilon}$ and $\omega_{\varepsilon}$, and~\eqref{eq:assumption},
we obtain
\begin{align*}
    \Bigl\lvert \expect\bigl[ \widehat r_{T,\varepsilon} \widehat \varrho_{T,\varepsilon} \bigr] \Bigr\rvert
        &\leq
        C T^2 \int_{0}^{1} \!\! \int_{0}^{1}
        w_\varepsilon \left(1 - u \right) \omega_\varepsilon \left(1 - v\right)
        \e^{- \lambda |u-v|T} u v \, \d u \, \d \upsilon \\
        &=
        C T^2 \int_{0}^{\varepsilon} \!\! \int_{\varepsilon}^{1}
        w \left(1 - u \right) w \left(1 - \upsilon\right)
        \e^{- \lambda |u-\upsilon|T} u \upsilon \, \d u \, \d \upsilon \\
        &\leq
        C T^2 \mathscr{W}^2 \int_{0}^{\varepsilon} \!\! \int_{\varepsilon}^{1}
        \e^{- \lambda |u-\upsilon|T} \d u \, \d \upsilon \leq \frac{C \mathscr{W}^2}{\lambda^2},
\end{align*}
where the last inequality follows from the fact that $|u-v| = u-v$ (since~$u\in[\varepsilon,1]$ and~$v\in[0,\varepsilon]$), so that
\begin{align*}
    T^2 \int_0^\varepsilon \int_\varepsilon^1
        \e^{-\lambda |u-\upsilon|T} \d u \, \d \upsilon &= \frac{(\e^{\lambda T \varepsilon} - 1)(\e^{-\lambda T \varepsilon}-\e^{-\lambda T})}{\lambda^2} \leq \frac{1}{\lambda^2}.
\end{align*}
The statement~\eqref{eq:statement_corollary_general} then follows from taking the limits $T \to +\infty$
and then $\varepsilon \to 0$ in~\eqref{eq:var_ineq_decomp}.
\end{proof}

\section{Change of variables}
\label{appendix:cov}

\begin{lemma}
    \label{lemma:cov2}
    Let $f$ be a continuous function on $\mathcal{D} = \{(t,s) \colon 0\leq s\leq t, \; t \in [0,T]\}$. Then,
    \begin{equation}
        \int_0^T\int_0^t f(t-s) \, ds \, dt = \int_0^T f(\theta)(T - \theta) \, d\theta.
        \label{eq:cov_lemma_statement}
    \end{equation}
\end{lemma}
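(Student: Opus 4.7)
The plan is to reduce the left-hand side to the right-hand side by performing a substitution in the inner integral followed by an application of Fubini's theorem to interchange the order of integration. The main (indeed only) obstacle is keeping the region of integration straight; no analytic subtleties arise since $f$ is continuous on the compact triangle $\mathcal{D}$ and hence the double integral is absolutely convergent.

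Concretely, for fixed $t \in [0,T]$, I would substitute $\theta = t-s$ in the inner integral. As $s$ runs from $0$ to $t$, the new variable $\theta$ runs from $t$ down to $0$, and $ds = -d\theta$, so
\begin{equation*}
    \int_0^t f(t-s) \, ds = \int_0^t f(\theta) \, d\theta.
\end{equation*}
Substituting this into the original double integral gives
\begin{equation*}
    \int_0^T\!\int_0^t f(t-s) \, ds \, dt = \int_0^T\!\int_0^t f(\theta) \, d\theta \, dt.
\end{equation*}

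Next, I would swap the order of integration via Fubini's theorem. The region $\{(t,\theta) : 0 \leq \theta \leq t \leq T\}$ can equivalently be described as $\{(t,\theta) : 0 \leq \theta \leq T,\ \theta \leq t \leq T\}$, which yields
\begin{equation*}
    \int_0^T\!\int_0^t f(\theta) \, d\theta \, dt = \int_0^T f(\theta) \int_\theta^T dt \, d\theta = \int_0^T f(\theta)(T-\theta) \, d\theta,
\end{equation*}
which is precisely the right-hand side of~\eqref{eq:cov_lemma_statement}. Alternatively, one can view the whole computation as a single change of variables $(s,t) \mapsto (s,\theta) = (s, t-s)$, whose Jacobian determinant equals $1$ and which maps the triangle $\{0 \leq s \leq t \leq T\}$ onto $\{s,\theta \geq 0,\ s+\theta \leq T\}$; then integrating out $s$ over $[0, T-\theta]$ gives the factor $T-\theta$.
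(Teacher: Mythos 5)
Your proof is correct and takes essentially the same approach as the paper: the substitution $\theta = t-s$ followed by Fubini to swap the order of integration. The only cosmetic difference is that the paper phrases the substitution as a two-dimensional change of variables $(t,s) \mapsto (\tau,\theta)$ with an explicit Jacobian computation, whereas you perform the equivalent one-variable substitution in the inner integral for fixed $t$ (and note the two-dimensional view as an alternative), which is marginally more streamlined.
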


\begin{proof}
Define the variables $\theta = t-s$ and $\tau = t$, and consider the $C^1$-diffeomorphism
\begin{equation}
    \notag
    \Phi(\tau, \theta) = (t(\tau,\theta), s(\tau,\theta)) = (\tau, \tau - \theta).
\end{equation}
It holds that
\begin{align}
     \iint_\mathcal{D} f(t-s) \, ds \, dt &= \iint_{\widetilde{\mathcal{D}}} f\paren*{\Phi_1(\tau,\theta) - \Phi_2(\tau,\theta)} \abs*{J_\Phi(\tau,\theta)} \, d\theta \, d\tau,
     \label{eq:cov_cor}
\end{align}
with $\widetilde{\mathcal{D}} = \Phi^{-1}(\mathcal{D})$ given by
\begin{equation}
    \notag
    \widetilde{\mathcal{D}} = \{(\tau,\theta) \colon 0\leq \theta\leq\tau, \; \tau\in[0,T]\}.
\end{equation}
The Jacobian is given by
\begin{equation}
    \notag
    J_\Phi(\tau,\theta) = \det\begin{bmatrix}
        \dfrac{\partial t(\tau,\theta)}{\partial \tau} & \dfrac{\partial t(\tau,\theta)}{\partial\theta} \\[1em]
        \dfrac{\partial s(\tau,\theta)}{\partial\tau} & \dfrac{\partial s(\tau,\theta)}{\partial\theta}
    \end{bmatrix} = \det\begin{bmatrix}
        1 & 0 \\
        1 & -1
    \end{bmatrix} = -1,
\end{equation}
so that $|J_\Phi(\tau,\theta)| = 1$. Thus, applying \eqref{eq:cov_cor}, using Fubini's theorem then integrating with respect to~$\tau$ gives
\begin{align}
    \notag
    \int_0^T\int_0^t f(t-s) \, ds \, dt &= \int_0^T \int_0^\tau f(\theta) \, d\theta \, d\tau = \int_0^T \int_\theta^T f(\theta) \, d\tau \, d\theta = \int_{0}^{T} f(\theta)(T-\theta) \, d\theta,
\end{align}
which is the desired result.
\end{proof}

\begin{corollary}
\label{cor:cov3}
    Under the same setting as \cref{lemma:cov2}, it holds that
\begin{equation}
    \notag
    \int_0^T\int_0^t f\paren*{\frac{t-s}{T}} \, ds \, dt = T^2\int_0^1 f(v)(1 - v) \, dv.
\end{equation}
\end{corollary}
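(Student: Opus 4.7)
The plan is to reduce this statement directly to \cref{lemma:cov2} by applying it to the rescaled function $\widetilde f(\theta) = f(\theta/T)$, then performing a linear change of variable to rescale the interval of integration from $[0,T]$ to $[0,1]$. Since $\widetilde f$ inherits continuity from $f$ on $[0,T]$, the hypotheses of \cref{lemma:cov2} are satisfied, so no additional regularity argument is needed.

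Concretely, I would first apply \eqref{eq:cov_lemma_statement} with $f$ replaced by $\widetilde f$, giving
\[
\int_0^T\int_0^t f\paren*{\frac{t-s}{T}} \, ds \, dt
= \int_0^T\int_0^t \widetilde f(t-s) \, ds \, dt
= \int_0^T \widetilde f(\theta)(T-\theta) \, d\theta
= \int_0^T f\paren*{\frac{\theta}{T}}(T-\theta) \, d\theta.
\]
Then I would perform the change of variable $v = \theta/T$, so that $d\theta = T \, dv$ and $T - \theta = T(1-v)$. This immediately yields
\[
\int_0^T f\paren*{\frac{\theta}{T}}(T-\theta) \, d\theta = \int_0^1 f(v) \, T(1-v) \, T \, dv = T^2 \int_0^1 f(v)(1-v) \, dv,
\]
which is the stated identity.

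There is essentially no obstacle here: the corollary is a one-line rescaling of \cref{lemma:cov2}. The only thing to be mindful of is to apply \cref{lemma:cov2} \emph{before} the change of variable $v = \theta/T$, since the lemma is stated on the triangular domain with times in $[0,T]$; the rescaling to $[0,1]$ is then a routine one-dimensional substitution.
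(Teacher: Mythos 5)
Your proof is correct and follows essentially the same route as the paper: apply \cref{lemma:cov2} (to the rescaled integrand) and then perform the linear substitution $v = \theta/T$. The paper's proof is stated more tersely but does exactly this.
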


\begin{proof}
    This result follows trivially from \cref{lemma:cov2} with the additional change of variable~$\theta = Tv$ in~\eqref{eq:cov_lemma_statement}.
\end{proof}

\section{Additional material on fluctuation formulas}
\label{appendix:add_fluc_form}
\subsection{Weight functions numerical illustration}
\label{appendix:weight_num_ill}
In this section,
we illustrate the bias and variance of the estimators~\eqref{eq:GK_estimator}  and~\eqref{eq:HE_gen_estimator},
with various weights for the latter,
in a simple setting where these quantities can be calculated explicitly,
up to low-dimensional numerical quadratures.
Specifically, we consider the setting where $X_t$ is the one-dimensional stationary Ornstein--Uhlenbeck process,
\[
    \d X_t = - X_t \, \d t + \sqrt{2} \, \d W_t, \qquad X_0 \sim \mathcal N(0, 1).
\]
and $f,g$ are the identity functions.
That is to say,
we consider estimators for the quantity
\[
    \rho = \int_{0}^{\infty} k(t) \, \d t, \qquad k(t) := \expect \left[ X_0 X_t \right] = \e^{-\abs{t}}.
\]

\paragraph{Green--Kubo estimator}A simple calculation gives that the bias and variance of the estimator~\eqref{eq:GK_estimator},
for~$K = 1$ realization, are given by
\begin{align*}
    &\expect \bkt*{\GKest} - \rho = -\e^{-T}, \\
    &\var \bkt*{\GKest}
    = \int_{0}^{T} \int_{0}^{T} \expect {\left[ X^1_0 \, X^1_0 \, X^1_s \, X^1_t \right]} \, \d s \, \d t - \expect {\left[ \GKest \right]}^2 \\
    &\hspace{1.5cm} = \int_{0}^{T} \int_{0}^{T} 2 k(t) k(s) + k(0) k(t-s) \, \d s \, \d t - \expect {\left[ \GKest \right]}^2
    = 2T - 1 + \e^{-2T},
\end{align*}
where we used Isserlis' theorem for the variance.
The absolute bias and the variance are illustrated for a range of values of~$T$ in~\cref{fig:bias_var_gk}.
\begin{figure}[ht]
    \centering
    \includegraphics[width=0.9\linewidth]{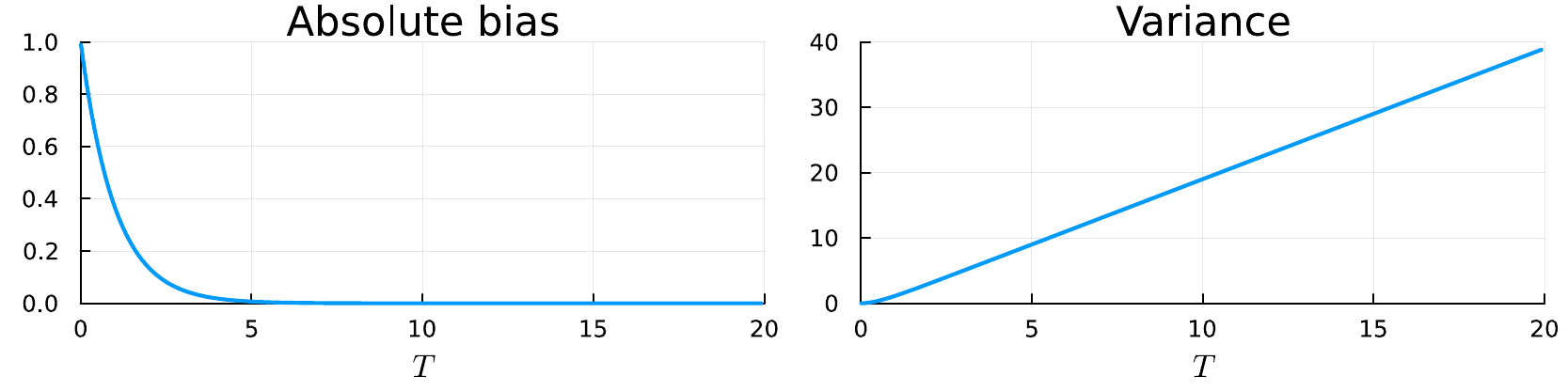}
    \caption{Absolute bias and variance of the \textbf{Green--Kubo} estimator~\eqref{eq:GK_estimator},
        in the simple setting of \cref{appendix:weight_num_ill}.
    }\label{fig:bias_var_gk}
\end{figure}

\paragraph{Half-Einstein estimators}\label{par:Half-Einstein}
A similar reasoning
gives that the bias of the half-Einstein estimator~\eqref{eq:HE_gen_estimator} is given by
\begin{align*}
    &\expect \bkt*{\HEest} - \rho
= \int_{0}^{T} \paren*{1 - \frac{\theta}{T}} w \Bigl(\frac{\theta}{T}\Bigr) k(\theta) \, \d \theta - 1.
\end{align*}
To obtain a practical formula for the variance,
we first rewrite the estimator~\eqref{eq:HE_gen_estimator} as follows:
\begin{equation}
    \label{eq:rewriting}
    \HEest = \frac{1}{TK} \sum_{k=1}^{K} \int_0^T\int_0^{T-u} w \paren*{\frac{u}{T}} f(X^k_s) g(X^k_{s+u}) \, ds \, du.
\end{equation}
Consider the particular setting of this subsection, with $f,g$ the identity functions and $K = 1$.
Squaring~\eqref{eq:rewriting}, taking the expectation, and using Isserlis' theorem,
we obtain the following expression for the variance:
\[
    \var \bkt*{\HEest} =  \frac{1}{T^2} \int_0^T \! \! \int_0^T w \paren*{\frac{u}{T}} w \paren*{\frac{v}{T}} I(u, v) \, \d v \, \d u - \expect \bkt*{\HEest}^2,
\]
where
\[
%    I(u, v) := \int_0^{T-u} \int_0^{T-v} \Bigl( k(u) k(v) + k(\abs{t+v-s-u}) k(\abs{t-s}) + k(\abs{t+v-s}) k(\abs{t-s-u}) \Bigr) \, \d t \, \d s.
    I(u, v) := \int_0^{T-u} \int_0^{T-v} \Bigl( k(u) k(v) + k(t+v-s-u) k(t-s) + k(t+v-s) k(t-s-u) \Bigr) \, \d t \, \d s.
\]
The inner double integral $I(u, v)$ can be calculated explicitly by symbolic calculations,
after which the outer double integral can be calculated by numerical quadrature.
The absolute bias and the variance,
for various weight functions encountered in the literature and made precise in \cref{table:weights_num_illustration}, and for a range of values of~$T$,
are illustrated  in~\cref{fig:bias_var_he}.
\begin{figure}[ht]
    \centering
    \includegraphics[width=0.9\linewidth]{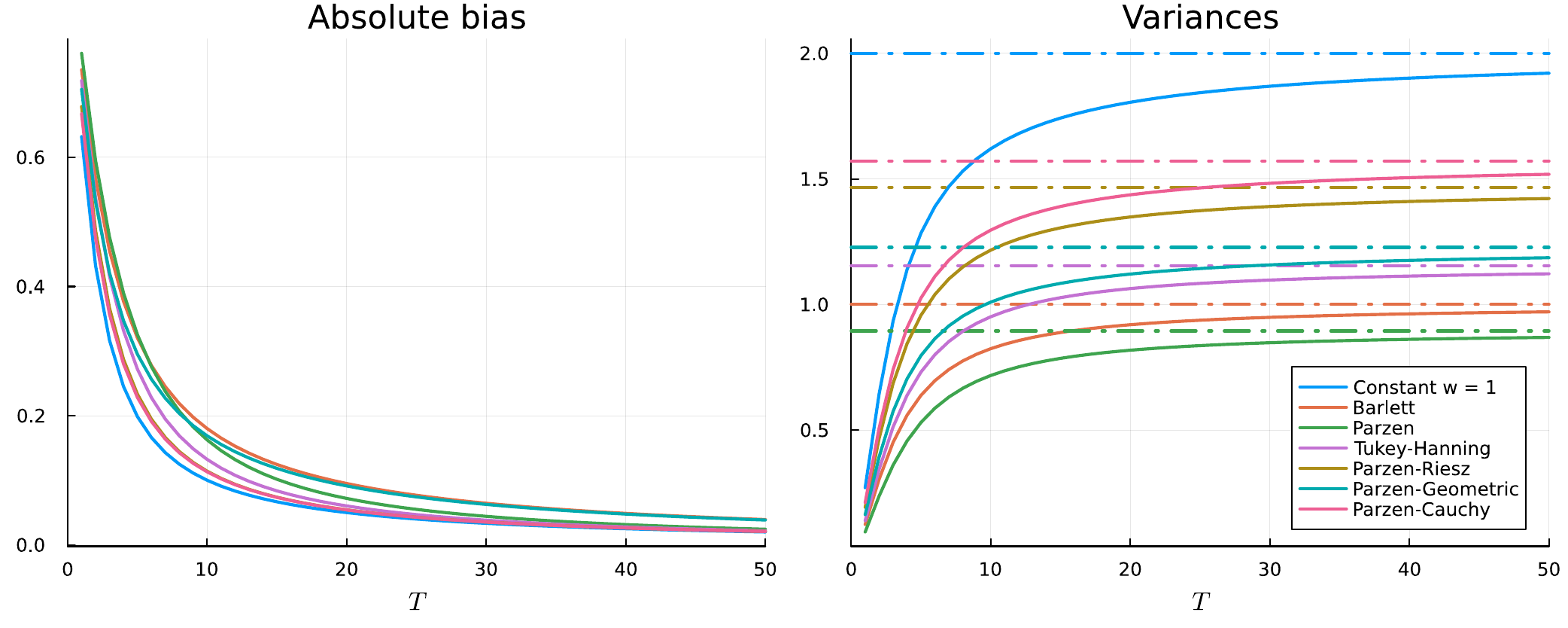}
    \caption{Absolute bias and variance of the \textbf{half-Einstein} estimator~\eqref{eq:HE_gen_estimator} for the weight functions described in \cref{table:weights_num_illustration}
        in the simple setting of \cref{appendix:weight_num_ill}.
        The dashed lines correspond to the asymptotic values of the variances given by~\cref{prop:var_standard_HE}.
    }\label{fig:bias_var_he}
\end{figure}

\begingroup
\renewcommand{\arraystretch}{1.2}
\begin{table}[htb!]
    \footnotesize
    \centering
    \caption{Expression for various weight functions $w(t)$ for $0\leq t\leq 1$, otherwise defined as~$0$; see \cite{wang2012} and references therein.}
    \label{table:weights_num_illustration}
    \begin{tabular}{ll}
        \toprule
        \textbf{Weight function} & \textbf{Expression for $w(t)$}\\\midrule% (for $0\leq t\leq 1$)} \\\midrule
        Constant & $w(t) = 1$ \\
        Bartlett & $w(t) = 1 - t$ \\
        Parzen & $w(t) = \begin{cases} 1 - 6t^2 + 6t^3, &t\leq 0.5 \\ 2(1-t)^3, &t > 0.5\end{cases}$ \\
        Tukey--Hanning & $w(t) = \dfrac{1+\cos(\pi t)}{2}$ \\
        Parzen--Riesz & $w(t) = 1 - t^2$ \\
        Parzen--Geometric & $w(t) = \dfrac{1}{1+t}$ \\
        Parzen--Cauchy & $w(t) = \dfrac{1}{1+t^2}$ \\
        \bottomrule
    \end{tabular}
\end{table}
\endgroup

\subsection{Remark on the half-Einstein estimator}
\label{appendix:HE_remark}
\begin{remark}
    The estimator~\eqref{eq:HE_gen_estimator} may also be rewritten as
    \[
        \frac{1}{TK}\sum_{k=1}^K \int_0^T\int_0^{T-u} w\paren*{\frac{u}{T}} f(X_s^k)g(X_{s+u}^k) \, ds \, du.
    \]
    Thus, the estimator could also be interpreted as an approximation of the Green--Kubo formula~\eqref{eq:GK} where the correlation~$\expect \left[ f(X_0) g(X_u) \right]$
    is approximated by
    \[
        \expect \left[ f(X_0) g(X_u) \right]
        \approx
        \frac{1}{TK}\sum_{k=1}^K \int_0^{T-u} w\paren*{\frac{u}{T}} f(X_s^k)g(X_{s+u}^k) \, ds,
        \qquad 0 \leq u \leq T.
    \]
    The right-hand side is an unbiased estimator of the left-hand side for $w(x) = (1-x)^{-1}$.
    However, this choice of the weight function leads to the estimator~\eqref{eq:HEest_no_w} having an unbounded variance in the limit as~$T \to \infty$,
    as suggested by~\cref{prop:var_standard_HE} further in this section
    and proved rigorously in \cref{corollary:unbounded_variance}.
    In practice, other choices of $w$ are thus preferred.
\end{remark}

\section{Neural network architecture}
\label{appendix:nn_architecture}
As discussed in \cref{subsubsec:num_results_2d-lang} for the Langevin dynamics example, we employ a featurization layer to transform the original four inputs $(q_1, q_2, p_1, p_2)$ into a seven-dimensional feature set, ensuring the periodicity of positional terms and augmenting the feature space with additional relevant kinetic terms. This is done via \eqref{eq:featurization}, which we recall for convenience:
\begin{equation}
    \notag
    \mathrm{featurization}\paren*{q_1,q_2,p_1,p_2} = \paren*{\sin(q_1), \cos(q_1), \sin(q_2), \cos(q_2), p_1, p_2, \frac{p_1^2 + p_2^2}{2}}.
\end{equation}
Typically, such featurization approaches would directly use the transformed features as the input layer without an additional ``functional layer''. However, by retaining the original four inputs before featurization, we avoid the complex chain-rule transformations in the loss function, where the differential operator \eqref{eq:lang_generator} acts on the solution with respect to the original variables $(q_1, q_2, p_1, p_2)$.

The network topology is illustrated in \cref{fig:2d-lang_network_both} for the Langevin and multiscale dynamics examples of \cref{subsec:lang_application,subsec:multiscale}, respectively.

\begin{figure}[tbh]
    \centering
    \includegraphics[width=0.7\linewidth]{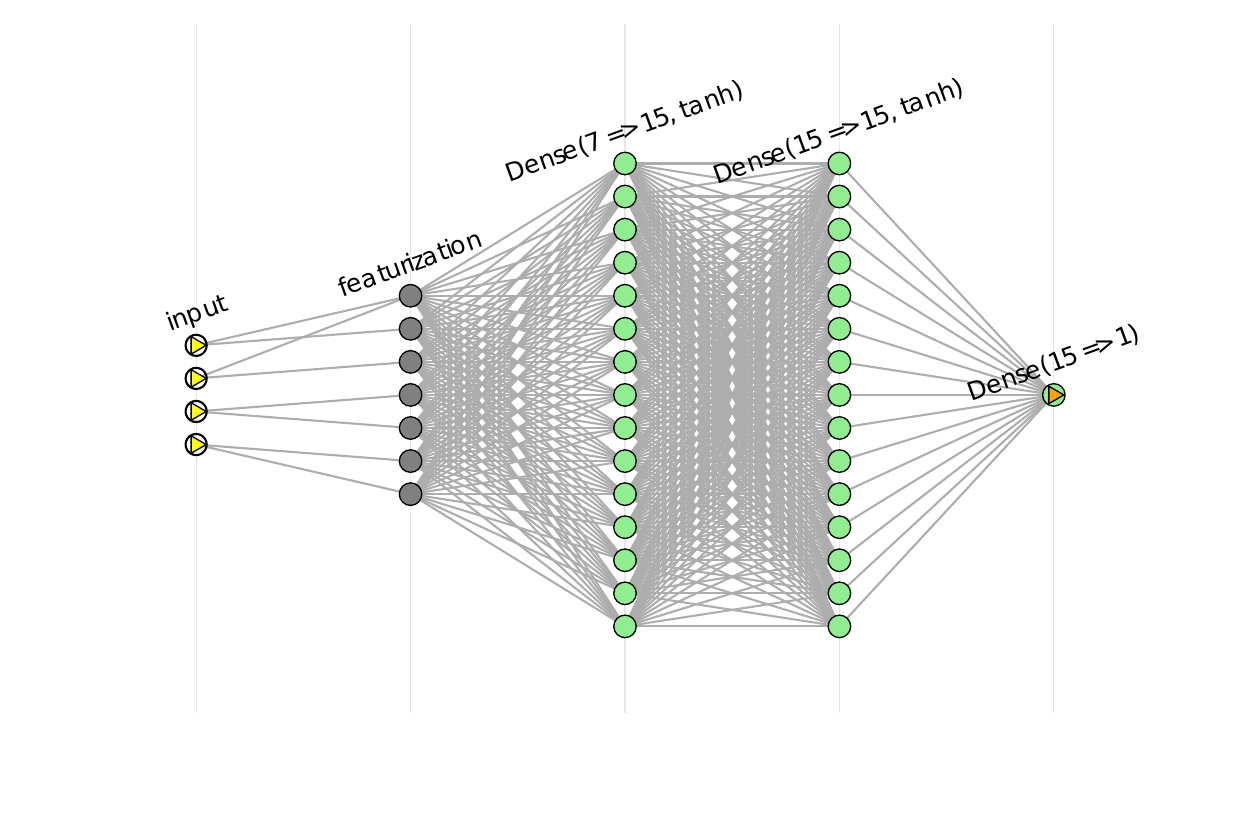}
    \includegraphics[width=0.7\textwidth]{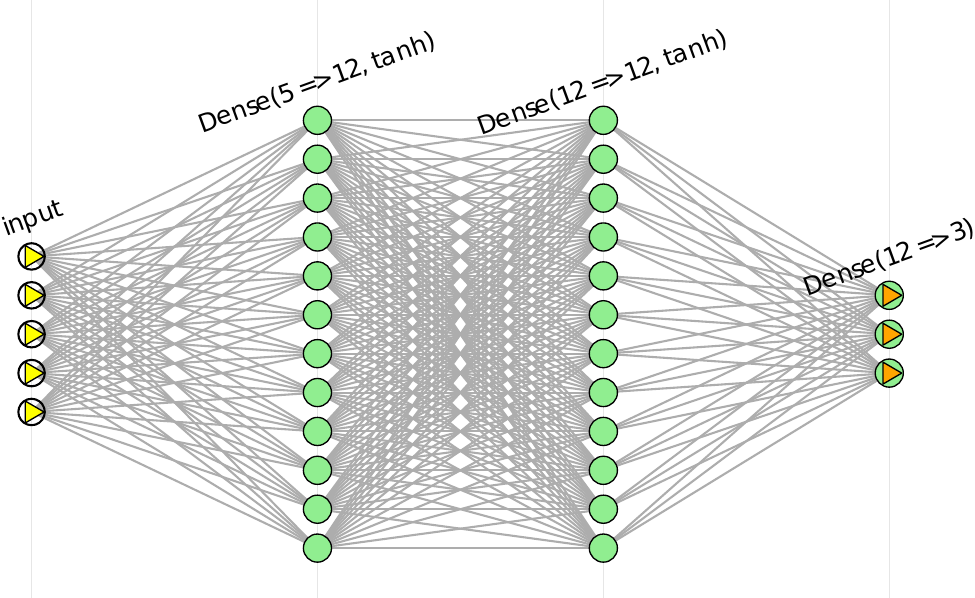}
    \caption{Illustration of neural network architecture. Top: Langevin dynamics, with inputs undergoing a featurization layer. Bottom: multiscale dynamics. Both examples employ $\tanh$ as the activation function for the hidden layers.} \label{fig:2d-lang_network_both}
\end{figure}

\section*{Acknowledgments}
The authors would like to thank Petr Plechac and George Kevrekidis for helpful discussions.
This project has received funding from the European Union's Horizon 2020 research and innovation program under the Marie Sklodowska--Curie grant agreement No 945332, and from the European Research Council (ERC) under the European Union's Horizon 2020 research and innovation programme (project EMC2, grant agreement No 810367). We also acknowledge funding from the Agence Nationale de la Recherche, under grants ANR-19-CE40-0010-01 (QuAMProcs), ANR-21-CE40-0006 (SINEQ) and ANR-23-CE40-0027 (IPSO).
GP is partially supported by an ERC-EPSRC Frontier Research Guarantee through Grant No. EP/X038645, ERC Advanced Grant No. 247031 and a Leverhulme Trust Senior Research Fellowship, SRF$\backslash$R1$\backslash$241055. Part of the work was done while RS was visiting the CNRS-Imperial Abraham de Moivre IRL. We thank the Lab for the hospitality and partial financial support.

\clearpage
\printbibliography

\end{document}